\begin{document}

\newcommand{\pnt}{\textup{pnt}}
\newcommand*{\KK}{\mathrm{KK}}
\newcommand*{\RKK}{\mathrm{RKK}}
\newcommand*{\K}{\mathrm{K}}
\newcommand*{\Ktop}{\mathrm{K}^{\mathrm{top}}}
\newcommand*{\KX}{\mathrm{KX}}
\newcommand*{\RRKK}{\mathscr{R}\KK}
\newcommand{\Dudelta}{\widehat{\Delta}}
\newcommand*{\Comp}{{\mathbb{K}}}
\newcommand*{\Bound}{{\mathbb{B}}}
\newcommand{\T}{\mathbb{T}}
\newcommand{\Q}{\mathbb{Q}}
\newcommand{\Z}{\mathbb{Z}}
\newcommand{\C}{\mathbb{C}}
\newcommand{\N}{\mathbb{N}}
\newcommand{\R}{\mathbb{R}}
\newcommand{\Hom}{\textup{Hom}}
\newcommand{\ip}[1]{\langle #1 \rangle}
\newcommand*{\defeq}{\mathrel{:=}}

\newcommand{\PD}{\mathrm{PD}}
\newcommand{\Isom}{\mathrm{Isom}}
\newcommand{\Eul}{\mathrm{Eul}}
\newcommand{\End}{\mathrm{End}}
\newcommand{\TT}{\mathbb{T}}
\newcommand{\Rep}{\textup{Rep}}
\newcommand{\Aut}{\textup{Aut}}
\newcommand{\trace}{\textup{trace}}
\newcommand{\Id}{\textup{Id}}
\def\E{\mathcal E}

\theoremstyle{plain}
 \newtheorem{theorem}{Theorem}[section]
\newtheorem{corollary}[theorem]{Corollary}
 \newtheorem{lemma}[theorem]{Lemma}
\newtheorem{lem}[theorem]{Lemma}
\newtheorem{prop}[theorem]{Proposition}
\newtheorem{proposition}[theorem]{Proposition} 
\newtheorem{lemdef}[theorem]{Lemma and Definition}
\theoremstyle{defn}
\newtheorem{defn}[theorem]{Definition}
\theoremstyle{definition}
\newtheorem{definition}[theorem]{Definition}
\newtheorem{defremark}[theorem]{\bf Definition and Remark}
\theoremstyle{remark}
\newtheorem{remark}[theorem]{Remark}
\newtheorem{assumption}[theorem]{Assumption}
\newtheorem{example}[theorem]{Example}
\newtheorem{examples}[theorem]{Examples}
\newtheorem{note}[theorem]{Note}
\numberwithin{equation}{section} \emergencystretch 25pt
\renewcommand{\theenumi}{\roman{enumi}}
\renewcommand{\labelenumi}{(\theenumi)}

\newcommand*{\abs}[1]{\lvert#1\rvert}
\newcommand{\card}{\mathrm{card}}
\def\supp{\operatorname{supp}}
\def\ind{\operatorname{ind}}

\newcommand{\Eulc}{\chi^{\Eul}}
\newcommand{\TTspec}{\TT\textup{-spec}}
\newcommand{\sheaf}{\mathcal{O}}
\newcommand{\Spec}{\textup{Spec}}
\newcommand{\spec}{\textup{-spec}}
\newcommand{\Laur}{\C[X, X^{-1}]}. 
\newcommand{\Laurs}{\C[X_1, X_1^{-1}, \ldots , X_n, X_n^{-1}]}
\newcommand{\rank}{\textup{rank}}
\newcommand{\ann}{\textup{ann}}
\newcommand{\Fixed}{\mathcal{C}}
\newcommand{\Tvert }{\mathrm{T}}
\newcommand{\Lef}{\textup{Lef}}
\newcommand{\Lindex}{\textup{Ind}^{L}}
\newcommand{\Ad}{\textup{Ad}}
\newcommand{\Div}{\textup{Div}}

\title[Localization techniques in circle-equivariant KK-theory]
  {Localization techniques in circle-equivariant Kasparov theory}

\author[Emerson]{Heath
 Emerson}
 \address{Department of Mathematics and Statistics,
 University of Victoria,
 PO BOX 3045 STN CSCVictoria,
 B.C.Canada}
 \email{hemerson@uvic.ca}
 
 \subjclass[2000]{19K35, 46L80}
\thanks{The author was supported by a National Science and Engineering Council of Canada (NSERC) Discovery grant.}

\maketitle

\begin{abstract}
Let \(\TT\) be the circle and \(A\) be a \(\TT\)-C*-algebra. Then the \(\TT\)-equivariant 
\(\K\)-theory \(\K_*^\TT (A)\) is a module over the representation 
ring \(\Rep (\TT)\) of the circle. The latter is a Laurent polynomial ring. 
Using the support of the module as an invariant, 
and techniques of 
Atiyah, Bott and Segal, we deduce that there are
 examples of \(\TT\)-C*-algebras
\(A\) such that \(A\) and \(A\rtimes \TT\) are in the bootstrap 
category, but \(A\) is not \(\KK^\TT\)-equivalent to any 
commutative \(\TT\)-C*-algebra. We also assemble various results on 
\(\TT\)-equivariant \(\K\)-theory of smooth manifolds and deduce an 
equivariant version of the Lefschetz fixed-point formula for 
\(\TT\)-equivariant geometric correspondences. 

\end{abstract}

\def\mathcs{{\normalshape\text{C}}^{\displaystyle *}}

\section{Introduction}

This article has several purposes. The first is to show that
 many \(\TT\)-C*-algebras 
are not \(\KK^\TT\)-equivalent to any 
commutative \(\TT\)-C*-algebra, even though both they 
and their cross-products 
by \(\TT\) are in the boostrap category.  These 
examples include the Cuntz-Krieger algebras 
\(O_A\) with their usual circle actions.  To prove this statement 
we use a simple \(\K_*^\TT\)-theoretic 
obstruction to commutativity based on ideas of Atiyah, 
Bott and Segal. 

The phenomenom just described is in 
sharp contrast to the non-equivariant situation: every 
C*-algebra in the boostrap category is 
\(\KK\)-equivalent to a commutative one.

 Here and throughout this article, 
 \(\K_*^\TT(A):= \K_0^\TT(A) \oplus \K_1^\TT(A)\) denotes 
 equivariant \(\K\)-theory with \emph{complex coefficients}, 
 \emph{i.e.} is the integral \(\K\)-theory 
 tensored by \(\C\). In particular
  \(\Rep (\TT)= \KK^\TT(\C, \C)\cong \Laur\) 
 is the ring of Laurent polynomials with complex 
 coefficients. 
 
%Several related recent results of the author require study of
%the structure of \(\K_*^G(A)\) as a 
%module over \(\Rep (G)\). 
Study of equivariant \(\K\)-theory groups \(\K^*_G(X)\) 
as modules over \(\Rep (G)\) (\(G\) a compact group) 
 began with a series of papers by Atiyah, Bott and 
Segal, written in the 60's (see \cite{Atiyah-Bott:Moment},\cite{Atiyah-Segal:Index},
\cite{Atiyah-Segal:Euler},\cite{Segal}.)

 %The Universal Coefficient and 
 %Kunneth theorems require some simple homological algebra with 
 %\(\Z\). What is less obvious is the background role of 
 %algebraic geometry in such discussions. This becomes more 
 %clear when considering arbitrary \emph{connected} compact 
 %groups \(G\). 
 
A common strategy in these articles is first to prove results about 
the case \(G = \TT\), and then extend them 
to the case of connected groups \(G\) using Lie group theory (we will 
restrict entirely to \(\TT\) in this article.) The article 
\cite{Atiyah-Bott:Moment} treats equivariant
cohomology (for torus actions) and contains a lot of the essential ideas
used by us here, except that we work in equivariant 
\(\K\)-theory instead. Other good sources for 
equivariant \(\K\)-theory are the articles \cite{Segal} of Segal and 
Atiyah-Segal \cite{Atiyah-Segal:Index}. 
As we wish to reach a wider readership than only those who are 
familiar with these
articles, we have explained supports and 
localization rather carefully in this article.

Any module over \(\Rep (\TT)\cong \Laur\), and in particular, 
the module \(\K_*^\TT(A)\)
 for a \(\TT\)-C*-algebra \(A\), yields a sheaf of modules  
over \(\C^*\) defined by localizing the module to Zariski open sets.
 Such a sheaf has a support. 
The techniques of Atiyah and Segal are used in the first 
part of the article to check that, for any locally compact \(\TT\)-space 
\(X\),  the support of 
\(\K_*^\TT\bigl( C_0(X)\bigr) = \K^*_\TT (X)\) is 
  always either contained in the unit circle or is all of 
\(\C^*\).  But, as we observe,  
for a Cuntz-Krieger algebra \(O_A\) with its standard circle action, 
the support of the sheaf \(\K_*^\TT(O_A)\) is the set of nonzero eigenvalues 
of the 
\(0\)-\(1\)-valued matrix \(A\). 

Thus Cuntz-Krieger algebras have 
rather arbitrary algebraic integers as spectral points. In particular, they are not generally 
\(\KK^\TT\)-equivalent to commutative \(\TT\)-C*-algebras.

The second purpose of this paper is to strengthen several 
results on 
\(\TT\)-equivariant \(\K\)-theory of compact smooth 
manifolds due to
 Atiyah \emph{et al}, for example, proving that 
after a suitable localization, \(C(X)\) is \(\KK^\TT\)-equivalent 
to \(C(F)\) with \(F\subset X\) the stationary set, and to describe  
\(\TT\)-equivariant \(\K\)-theory for smooth manifolds in terms of 
various geometric data. This discussion is mainly for purpose 
of proving the\emph{ Lefschetz theorem} in \(\KK^\TT\). 

The 
equivariant Lefschetz theorem proved here generalizes the 
classical Lefschetz fixed-point 
formula. Recall that this formula 
equates a homological invariant of a smooth 
self-map \(f\colon X \to X\) with a geometric invariant of the map. 
Our equivariant Lefschetz formula 
takes into account a \(\TT\)-action for which the map is equivariant; 
moreover, it applies to more general morphisms in \(\KK^\TT(C(X), C(X))\) 
than just the ones induced from smooth maps: our techniques 
work just as well for geometric correspondences in the sense of 
\cite{Emerson-Meyer:Correspondences}.

Since \(\Laur\) is a principal ideal domain, 
any finitely generated \(\Laur\)-module \(M\) decomposes uniquely into a 
torsion module and a free module \(\cong \Laur^n\).  Any module 
self-map of \(M\) thus has a \(\Laur\)-valued trace by compressing  
it to the free part of \(M\). In particular, this applies to any 
element \(f\in \KK^\TT_*(C(X), C(X))\) where \(X\) is a \(\TT\)-space, 
for \(f\) acts by a module map on \(\K^*_\TT(X)\). We denote by 
\(\trace_{\Laur}(f_*)\in \Laur\) the \emph{graded} module trace of 
\(f_*\) in this sense.

In the case of a morphism \(f\) represented by a smooth \(\TT\)-equviariant 
geometric correspondence in the sense of \cite{Emerson-Meyer:Correspondences}, 
the Lefschetz theorem identifies the homological invariant 
\(\trace_{\Laur}(f_*)\) with 
the Atiyah-Singer \(\TT\)-index of a certain geometrically defined 
coincidence cycle constructed out of the correspondence: that is, we prove that
\begin{equation}
\label{intro:eq:lef_theorem}
 \trace_{\Laur}(f_*) = \ind_\TT \bigl( \Lef (f));
 \end{equation}
 where \(\Lef (f)\) is the class in \(\KK^\T\) of
  a certain \(\TT\)-equivariant Baum-Douglas 
 cycle for \(X\), depending geometrically on the correspondence 
 representing \(f\) and \(\ind_\TT\) is the Atiyah-Singer \(\TT\)-index.
 
  In particular the right hand side is 
defined purely 
in terms of equivariant correspondences and geometric 
intersections, and hence is a local, topological 
invariant of the correspondence. The left-hand-side is of course homological 
and global in nature. 

The equivariant Lefschetz theorem presented here is
 a special case of joint work with Ralf Meyer. See 
\cite{Emerson-Meyer:Lefschetz} for the more general version.

I would like to express my appreciation to Siegried Echterhoff and 
Ralf Meyer for their comments on the material here. The 
material in this note is related to joint work with both of them 
(independently.) I would also like to thank Nigel Higson 
for drawing my attention to the beautiful paper
\cite{Atiyah-Bott:Moment} of Atiyah and Bott on 
localization in equivariant cohomology.

Finally, the reader interested in further information on equivariant 
\(\K\)-theory for compact group actions should see the important 
source \cite{Rosenberg-Schochet:UCT}, which deals extensively with the 
Universal Coefficient and K\"unneth theorems in the integral version of 
\(\KK^\TT\),
 and more generally, for Hodgkin groups. When one works integrally,
 the representation ring \(\Rep (\TT)\) becomes \(\Z[X, X^{-1}]\) which is 
 no longer a principal ideal domain; this complicates some statements 
 considerably.

\section{The \(\TT\)-spectrum of spaces}

In the following, the reader should consider all \(\TT\)-equivariant 
\(\K\)-theory groups, \emph{e.g.} \(\K^*_\TT (X)\) for a 
\(\TT\)-space \(X\), or \(\K_*^\TT (A)\) for a \(\TT\)-C*-algebra 
\(A\), as having complex coefficients. Thus, \(\K_*^\TT(A)\) denotes 
the usual integral equivariant \(\K\)-theory of \(A\) 
tensored by the complex numbers.

Similarly, the symbol \(\Rep (\TT)\) means the 
usual representation ring of the circle, tensored with the 
complex numbers, or, more conveniently for us, 
the ring \(\Laur\) of Laurent polynomials in one variable, 
and complex coefficients. The isomorphism 
\(\Rep (\TT) \to \Laur\) is the character map. 

This note makes crucial use of the fact that for any 
\(\TT\)-C*-algebra \(A\), the \(\TT\)-equivariant \(\K\)-theory
 \(\K_*^\TT (A)\) is a module over \(\Rep (\TT)\cong \Laur\). 
 For unital, commutative
  \(\TT\)-C*-algebras this is rather clear, since in this 
  case \(\K_*^\TT (A)\) is a ring and the unital inclusion 
  \(\C \to A\) maps \(\Rep (\TT)\) to a subring of \(\K_*^\TT (A)\). 
  This induces the module structure. It is not hard to convince 
  oneself that if even if \(A\) is not unital, and hence no ring 
  embedding exists, the module structure 
  still makes sense. 
 
  In the general case, we may point to the 
  \emph{external product} in equivariant Kasparov theory 
  as a formal definition of the module structure: to translate to Kasparov 
  language, \(\K_*^\TT (A) = \KK^\TT_*(\C, A)\) and 
  \(\Rep (\TT) = \KK^\TT (\C, \C)\) (tensored by the complex numbers.)  
  So Kasparov external product gives grading-preserving 
  maps 
  \[ \KK^\TT_*(\C, A) \times \KK^\TT(\C, \C) \to \KK^\TT_*(\C, A)\]
  \[ \KK^\TT_*(\C, \C) \times \KK^\TT(\C, A) \to \KK^\TT_*(\C, A)\]
  These maps agree: external product is commutative. 
  
  More generally, \(\KK^\TT_*(A,B)\) is a graded \(\Rep (\TT)\)-module 
  for any \(A,B\).

  For commutative \(A\), \emph{i.e.} for \(\TT\)-spaces, 
  the module structure of \(\K^*_\TT (X)\) over 
  \(\Rep (\TT)\) has been quite extensively studied by 
  Atiyah and Segal in 
  [1] and [2], and also by Atiyah and Bott in the context of 
  equivariant cohomology in \cite{Atiyah-Bott:Moment}. 
  
The following definition applies to arbitrary \(\Laur\)-modules, 
and indeed, to modules over more general polynomial rings. 

\begin{definition}
\label{def:support}
Let \(M\) be a module over the ring \(\Rep (\TT) \cong \Laur\). Its
\emph{annihilator} \(\textup{ann}(M)\) is the ideal 
\(\{ f \in \Laur \mid fM = 0\}\).  
The 
\emph{support} of \(M\) is defined by 
\[ \supp (M) \defeq \bigcap_{f\in \textup{ann} (M)} Z_f\]
where \(Z_f\subset \C^*\) is the zero set of \(f\). 

\end{definition}

Thus a point 
\(z\) is not in the support of \(M\) if and only if there is a polynomial 
\(f\) such that \(f (z) \not= 0\) but \(fM = 0\). In particular, this can hold 
only if \(M\) has module torsion. Since a free module has 
no torsion, the support of a free module like \(\Laur\) itself, is \(\C^*\).

Under embeddings \(M_1\to M_2\) of 
\(\Laur\)-modules, supports can only increase as 
\(\ann (M_2)\subset \ann (M_1)\) in this situation, which 
implies \( \supp (M_1) \subset \supp (M_2)\). In particular, 
\(\supp (M) = \C^*\) as soon as \(M\) contains a free 
submodule. If on the other hand 
one has a surjection \(M_1\to M_2\), then \(\ann(M_1) \subset \ann(M_2)\) 
so that \(\supp(M_2) \subset \supp (M_1)\) results.

The ring \(\Laur\) is a principal ideal domain, \emph{i.e.}
 any ideal 
is generated 
 by a single polynomial \(f\). This polynomial is unique up to 
 multiplication by an invertible in \(\Laur\), \emph{i.e.} \(f\) can be replaced by 
 \(fX^n\) for any integer \(n\), and in particular \(f\) may always be 
 taken to be a polynomial. Furthermore, any finitely generated module 
 over a principal ideal domain decomposes uniquely into a 
 direct sum of a free module and a 
 torsion module. The torsion sub-module is by definition 
 \(\{m \in M \mid fm = 0 \; 
 \textup{for some } \; f\not= 0 \;  \;\textup{in} \; \Laur\}.\) 
 
 A finitely generated torsion module has a nonzero annihilator ideal 
 because the annihilator ideal
  is the intersection of the annihilator ideals of the generators, this 
 is an intersection of finitely many nonzero ideals and hence is nonzero. 
 If the annihilator of the torsion module is generated by \(f\), then 
 the support of the torsion module is the zero set \(Z_f\)  of \(f\) in \(\C^*\), and 
 in particular is a finite set of points of \(\C^*\). 
 If the module is not finitely generated, it may be torsion, but have a 
 zero annihilator ideal, however. In this case, the support will be 
 \(\C^*\) (see below for an example.) 

 If a module has finite dimension as a vector space over \(\C\) then 
 of course it is torsion and finitely generated and the above 
  discussion applies.

For any \(\Laur\)-module, ring multiplication by 
 \(X\in \Laur\) is an invertible, complex linear
 operator on the module, viewed just as a complex vector space. 
 If \(M\) is torsion with nonzero annihilator ideal, then 
 \emph{the support is 
 the set of eigenvalues of \(X\) and the generator \(f\) of the 
 annihilator ideal is the minimal polynomial of \(X\)}. Indeed, factor
  \( f(X) = (X-\lambda_1)^{k_1} \cdots (X-\lambda_n)^{k_n}\). Each
  \(\lambda_i\) must be an eigenvalue of \(X\) since
  \(\prod_{j\not= i} (X- \lambda_j )^{k_j} (X-\lambda_i)^{k_i-1}\) maps \(M\) into the 
  kernel of \(X-\lambda_i \). If the kernel of \(X- \lambda_i \)
   is zero, we would have 
  a polynomial of smaller degree annihilating \(M\), false.  So 
  the kernel is nonzero.   Furthermore, as \(f(X)  = 0\) on \(M\), 
 \(0 = f(X)v =  f(\lambda)v\) if \(v\) is any eigenvector of \(X\) with 
 eigenvalue \(\lambda\). Hence any eigenvalue of \(X\) is a root of \(f\).

 \begin{remark}
 \label{rem:finite_generation}
 Finite generation is guaranteed for the \(\Laur\)-module 
 \(\K^*_\TT (X)\) whenever 
 \(X\) is a smooth, compact manifold and \(\TT\) acts smoothly
 (see \cite{Segal}) or the discussion in Section 4 of this paper. 
\end{remark} 
 
%  Let \(V = \oplus_{1}^\infty V_n\) be a sum 
%  of finite-dimensional complex vector spaces 
 %\(V_n\) and \(T_n\) a linear automorphism of 
 %\(V_n\) with eigenvalues the group \(\Omega_n \subset \TT \) 
 %of complex \(n\)th roots of unity
 % and let \(X\) act on the sum with \(X\) acting 
 %by \(T_n\) on \(V_n\). 
 %Then 
 %\(V\) is torsion, \(X\) has eigenvalues \(\bigcup_n \Omega_n\), 
  %but the annihilator ideal of the 
 %module is zero and the support is \(\C^*\), so the 
 %support of \(V\) differs from the set of eigenvalues of 
 %\(X\) in this case, which it must, of course, since the 
 %support is always Zariski closed. See 
 %Example 
 %\ref{ex:points} for more discussion.  

\begin{definition}
\label{def:tspec}
Let \(A\) be a \(\TT\)-C*-algebra. The 
\emph{\(\TT\)-spectrum of \(A\)} is defined to be 
the support of 
\(\K_*^\TT(A)\) as an \(\Laur\)-module. 

\end{definition}

In the commutative case, we refer to the \(\TT\)-spectrum of 
the corresponding space.  

\begin{remark}
\label{rem:graded_spectrum}
The definition of spectrum in terms of the module 
\(\K_*^\TT(A) \defeq \K^\TT_0(A)\oplus \K^\TT_1(A)\) 
given above does not take into 
account the grading on \(\TT\)-equivariant \(\K\)-theory. 
A more natural invariant, in some ways, would take this 
into account, but we do not do this here because it is not 
necessary for our purposes. 
Note also that \(\K_*^\TT(A) \cong \K_0^\TT\bigl( C(S^1)\otimes A\bigr)\)
where the \(\TT\)-action on the circle \(S^1\) is trivial, which means 
that in computing module structures we can deal exclusively with 
\(\TT\)-equivariant vector bundles. 

\end{remark}

In the case of the trivial \(\TT\)-action on 
a point,  \(\K^*_\TT (\cdot) = \Rep (\TT) \) and 
the module structure over \(\Rep (\TT)\) is by ring multiplication. 
Hence the annihilator ideal is zero and \(\TTspec (\cdot) = \C^*\). 

If \(A = C([0,\infty))\) with trivial \(\TT\)-action, then 
\(\K_*^\TT (A) = 0\) and hence \(\TTspec (A) = \emptyset\) in 
this case.

Note that, although evaluation of Laurient polynomials at 
any \(z\in \C^*\) yields a \(\Laur\)-module \(M\) such that 
\(\supp (M) = \{z\}\), if this module is to arise from 
an equivariant \(\K\)-theory module, then \(z\) must be an 
algebraic integer, at least if the module is finite dimensional 
over \(\C\). 

\begin{proposition}
If \(\K_*^\TT (A)\) is finite-dimensional over \(\C\), then 
the \(\TT\)-spectrum of \(A\) is a 
finite set of algebraic integers in \(\C^*\). 
\end{proposition}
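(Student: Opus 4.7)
The plan is two-staged: first reduce finiteness of the spectrum to a statement about eigenvalues of a linear operator, then invoke the integral form of equivariant \(\K\)-theory to promote those eigenvalues to algebraic integers.

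\emph{Stage one.} Since \(\dim_\C \K_*^\TT(A) < \infty\), the \(\Laur\)-module \(\K_*^\TT(A)\) is automatically both finitely generated and torsion: the cyclic submodule generated by any single element sits inside the finite-dimensional whole, hence has annihilator of finite codimension, and intersecting the annihilators of a \(\C\)-basis gives a nonzero annihilator for the full module. By the analysis preceding the proposition, \(\supp(\K_*^\TT(A))\) equals the zero set \(Z_f \subset \C^*\) of the generator \(f\) of the annihilator ideal, which is also the minimal polynomial of the \(\C\)-linear operator of multiplication by \(X\). Its roots are precisely the eigenvalues of \(X\) acting on the finite-dimensional vector space \(\K_*^\TT(A)\), so the support is a finite set.

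\emph{Stage two.} By the conventions of the paper, \(\K_*^\TT(A) = \K_*^\TT(A; \Z) \otimes_\Z \C\), and multiplication by \(X\) on the complexification is the \(\C\)-linear extension of the \(\Z\)-linear endomorphism of \(\K_*^\TT(A; \Z)\) given by the class \(X \in \Z[X, X^{-1}]\), which is invertible because \(X^{-1}\) also acts. The goal is to exhibit an \(X^{\pm 1}\)-stable finitely generated \(\Z\)-submodule \(L_0 \subset \K_*^\TT(A)\) with full \(\C\)-span. Given such \(L_0\), it is torsion-free abelian of rank \(n := \dim_\C \K_*^\TT(A)\), hence isomorphic to \(\Z^n\); the matrix of \(X\) with respect to any \(\Z\)-basis of \(L_0\) then lies in \(\mathrm{GL}_n(\Z)\), its characteristic polynomial is monic with integer coefficients, and the eigenvalues of \(X\) on \(\K_*^\TT(A)\) --- being among its roots --- are algebraic integers.

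\emph{The main obstacle} is the construction of \(L_0\), which is not formal from the abstract \(\Z[X, X^{-1}]\)-module structure alone: for instance, the abstract cyclic module \(\Z[X, X^{-1}]/(2X - 1) \cong \Z[1/2]\) has one-dimensional \(\C\)-complexification but admits no \(X^{\pm 1}\)-invariant finitely generated integer sublattice with full \(\C\)-span, and its \(X\)-eigenvalue \(1/2\) is not an algebraic integer. To rule out such pathologies one has to use that \(\K_*^\TT(A; \Z)\) genuinely arises as equivariant \(\K\)-theory: \(\K_*^\TT(A; \Z)\) is finitely generated over \(\Z[X, X^{-1}]\) in the cases considered (see Remark~\ref{rem:finite_generation}), and \(K\)-theoretic integrality --- e.g., via the integer-valued \(\TT\)-equivariant index pairing --- forces the annihilator of \(\K_*^\TT(A)\) in \(\Z[X, X^{-1}]\) to contain a monic polynomial in \(X\) (up to a power of \(X\)), equivalently forces the torsion-free quotient of \(\K_*^\TT(A; \Z)\) as an abelian group to be finitely generated. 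This provides \(L_0\), and the matricial argument of Stage two then concludes the proof.
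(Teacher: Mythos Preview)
Your Stage one is correct and is exactly the paper's argument. Stage two also matches the paper's intent: the paper simply asserts that \(X\), arising from the integral theory, ``is represented in some basis for \(\K_*^\TT(A)\) by a matrix with integer coefficients,'' which is precisely the existence of your lattice \(L_0\). You are right to flag this as the nontrivial step; the paper glosses over it.

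The gap is that your resolution does not go through. Remark~\ref{rem:finite_generation} concerns only smooth compact manifolds, not general \(A\), and no ``index pairing'' forces a monic annihilator for an arbitrary \(\TT\)-C*-algebra. In fact your own abstract counterexample \(\Z[X,X^{-1}]/(2X-1)\) is realized by an honest \(\TT\)-C*-algebra: let \(B = M_{2^\infty}\otimes\Comp\), so that \(\K_0(B)=\Z[\tfrac12]\) and \(\K_1(B)=0\), and choose \(\sigma\in\Aut(B)\) with \(\sigma_*\) equal to multiplication by \(\tfrac12\) on \(\K_0\) (such \(\sigma\) exists by the classification of stable AF algebras, since multiplication by \(\tfrac12\) is an order-automorphism of \(\Z[\tfrac12]\)). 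Then \(A\defeq B\rtimes_\sigma\Z\) with the dual \(\TT\)-action has, by the first proposition of \S3, \(\K_*^\TT(A;\Z)\cong\Z[\tfrac12]\) with \(X\) acting as \(\tfrac12\); hence \(\K_*^\TT(A)\) is one-dimensional over \(\C\) while \(\TTspec(A)=\{\tfrac12\}\), which is not an algebraic integer. So the proposition as stated for arbitrary \(A\) is false, and neither your argument nor the paper's can be completed without an extra hypothesis. It becomes true, and your lattice argument works verbatim, once one assumes in addition that \(\K_*^\TT(A;\Z)\) is finitely generated as an abelian group --- a hypothesis satisfied in all the examples the paper actually treats.
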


\begin{proof}
The spectrum in this case is the spectrum  
of \(X\) acting on \(\K_*^\TT (A)\). 
But \(X\) comes from an endomorphism of the 
underlying \(\TT\)-equivariant \(\K\)-theory with 
\emph{integer coefficients} and therefore is represented
in some basis for \(\K_*^\TT(A)\) by a matrix with 
integer coefficients, and \(\TTspec (A)\) is its set of eigenvalues, 
so they are algebraic integers. 
\end{proof}

\begin{theorem}
\label{thm:commutative_case}
If \(A = C_0(X)\) is \emph{any} commutative 
\(\TT\)-C*-algebra, then either \(\TTspec (A) = \C^*\) 
or \(\TTspec (A) \subset \TT\). In the latter case, 
the spectrum is finite and each point of it is 
an \(n\)th root of unity where \(n\) is the order of some 
(finite) isotropy group of the action.  

If \(X\) is compact, then 
\(\TTspec (X) = \C^*\) if and
only if \(X\) has a stationary point. 

\end{theorem}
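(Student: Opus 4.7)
The plan is to use the Atiyah--Segal localization theorem as the central tool, with an easy direct argument for one half of the compact ``if and only if''. If \(X\) is compact and has a stationary point \(x_0\), the unit inclusion \(\C \hookrightarrow C(X)\) (constants) and the evaluation \(C(X) \to \C\) at \(x_0\) are both \(\TT\)-equivariant \(\ast\)-homomorphisms that compose to the identity on \(\C\). Applying \(\K_*^\TT\) exhibits \(\Rep(\TT)\) as a direct summand of \(\K_*^\TT(C(X))\), so the annihilator ideal of \(\K_*^\TT(C(X))\) is zero and \(\TTspec(X) = \C^*\).

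For the dichotomy, I would invoke Atiyah--Segal localization. For each \(z \in \C^*\) there is a smallest closed subgroup \(H_z \subset \TT\) such that the maximal ideal \((X-z)\) of \(\Laur\) is pulled back from \(\Rep(H_z)\): namely \(H_z = \Z/n\) when \(z\) is a primitive \(n\)-th root of unity, and \(H_z = \TT\) otherwise (in particular for every \(z \in \C^* \setminus \TT\)). Localization then gives \(\K_*^\TT(C_0(X))_{(X-z)} \cong \K_*^\TT(C_0(X^{H_z}))_{(X-z)}\). When \(H_z = \TT\), the right-hand side equals \(\K_*(X^\TT) \otimes_\C \Laur_{(X-z)}\) since \(\TT\) acts trivially on \(X^\TT\); in the non-compact setting I would first pass to the one-point compactification, where \(\infty\) is a fixed point contributing only a split \(\Rep(\TT)\)-summand that does not affect the support of \(C_0(X)\). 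Now: if \(X^\TT = \emptyset\), the localization vanishes at every non-root-of-unity, so \(\TTspec(X)\) lies in the roots of unity, hence in \(\TT\); if \(X\) is compact and \(X^\TT \neq \emptyset\), then \(\K_*(X^\TT) \neq 0\) and the localization is nonzero at every non-root-of-unity. Since \(\TTspec(X)\) is Zariski-closed in \(\C^*\) by definition, and proper Zariski-closed subsets of \(\C^*\) are finite (as \(\Laur\) is a PID), containing infinitely many non-roots-of-unity forces \(\TTspec(X) = \C^*\).

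It remains to describe \(\TTspec(X)\) when it is contained in \(\TT\). Finiteness follows from the same Zariski-closedness observation. For any \(\zeta \in \TTspec(X)\), say \(\zeta\) a primitive \(n\)-th root of unity, localization at \(H_\zeta = \Z/n\) gives \(\K_*^\TT(C_0(X))_{(X-\zeta)} \cong \K_*^\TT(C_0(X^{\Z/n}))_{(X-\zeta)}\), and non-vanishing of the left-hand side forces \(X^{\Z/n} \neq \emptyset\); the isotropy at a point of \(X^{\Z/n}\) is then a finite subgroup of order \(m\) with \(n \mid m\), exhibiting \(\zeta\) as an \(m\)-th root of unity with \(m\) the order of a genuine isotropy group. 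The main obstacle is the rigorous invocation of Atiyah--Segal localization in exactly the generality needed --- matching prime ideals of \(\Laur\) with closed subgroups of \(\TT\) and verifying that the localization formula applies in the locally compact (not merely compact) setting.
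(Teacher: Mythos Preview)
Your approach is essentially correct but takes a different route from the paper. The paper does not invoke the Atiyah--Segal localization theorem as a black box; instead it proves the needed statements directly, using the Palais slice theorem, an excision lemma (Lemma~\ref{lem:excision}), the computation for a single slice (Lemma~\ref{lem:spec_for_slices}), and an inductive-limit argument over pre-compact \(\TT\)-invariant subsets (Lemma~\ref{lem:lemmas}). In effect, the paper \emph{re-derives} the relevant instance of Atiyah--Segal localization in exactly the generality required for arbitrary locally compact \(\TT\)-spaces, whereas you quote the theorem wholesale. Your route is shorter, but---as you yourself flag---the obstacle is precisely whether the localization theorem applies at that generality; the paper's slice-and-excision argument is essentially what one has to do to verify it. So the two proofs are closer than they first appear: yours packages into a single citation what the paper chooses to spell out.

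There is one small but genuine gap in your last paragraph. From \(X^{\Z/n}\neq\emptyset\) you assert that the isotropy at a point there is a \emph{finite} group of order \(m\) with \(n\mid m\). But a point of \(X^{\Z/n}\) could have isotropy equal to \(\TT\). To rule this out you must first remove the stationary set \(F=X^{\TT}\): in the case \(\TTspec(X)\neq\C^*\) your own localization argument at any non-root-of-unity already gives \(\K^*(X^{\TT})=0\), and then the six-term sequence for \(F\subset X\) yields \(\K^*_\TT(X)\cong\K^*_\TT(X\setminus F)\), so one may replace \(X\) by \(X\setminus F\) and assume there are no stationary points---exactly the reduction the paper makes via Lemma~\ref{lem:lemmas}(ii). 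A related loose end: your case split is ``\(X^{\TT}=\emptyset\)'' versus ``\(X\) compact and \(X^{\TT}\neq\emptyset\)'', but for non-compact \(X\) with \(X^{\TT}\neq\emptyset\) the dichotomy is governed by whether \(\K^*(X^{\TT})\) vanishes, not by whether \(X^{\TT}\) is empty; this is easy to fix along the same lines.
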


The proof will occupy the rest of this section. We start by 
discussing stationary points. Suppose \(X\)  
has such a point. Then there is a 
\(\TT\)-map from the one-point \(\TT\)-space to \(X\); it 
induces a module map 
\(\K^*_\TT(X) \to \K^*_\TT (\cdot) = \Rep (\TT) \). If \
\(X\) is \emph{compact} this map is surjective 
because the map from \(X\) to a point is proper in this 
case and gives a splitting. Hence 
\(\C^* = \TTspec (\cdot) \subset \TTspec(X)\).

Thus, \(\TTspec (X) = \C^*\) if \(X\) has 
a stationary point and is compact. This is rather common; for example, 
by the Hopf theorem any smooth \(\TT\)-action on a 
smooth manifold of nonzero Euler characteristic 
has a stationary point. Hence having \(\TT\)-spectrum 
\(\C^*\) is rather generic for compact \(\TT\)-spaces. 

If \(X\) is not compact, it may have a stationary point 
without the spectrum being \(\C^*\); for example 
\([0,\infty)\) with trivial \(\TT\)-action has empty spectrum 
but many stationary points. The other implication also 
requires compactness in view of Example \ref{ex:points}
below, where the spectrum is \(\C^*\) but there is 
no stationary point.  

To get an example of a space with non stationary point but 
with spectrum \(\C^*\), observe first that 
for any collection \( (M_\lambda)_{\lambda \in \Lambda})\) of 
nonzero \(\Laur\)-modules, the annihilator of the direct sum 
\(M \defeq \bigoplus M_i\) is, essentially tautologically, the intersection 
\(\bigcap_i \ann(M_i)\) of the annihilators. 
But for the ring \(\Laur\), there can only be finitely 
many ideals containing a given nonzero ideal,
 for if the given one is generated by \((f)\) then any ideal 
 containing \((f)\) is generated by a divisor of \(f\). 
  Hence if there are infinitely many distinct ideals \( \ann(M_i)\), 
 then \(\bigcap_i \ann (M_i)\) would have to be the zero 
 ideal.
 
  This shows the following.

\begin{lemma}
\label{lem:adding}
If \(X\) is a \(\TT\)-space which is a disjoint union 
\(X = \bigsqcup_i X_i\) for a family of 
\(\TT\)-spaces \(X_i\). Then either the \(\TT\)-spectrum of 
\(X\) is \(\C^*\) or the sets \(\TTspec (X_i)\) 
are all finite, there are only finitely many of them,
and \(\TTspec (X) \) is their union. 
\end{lemma}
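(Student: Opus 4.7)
The plan is to reduce the statement to the annihilator calculation sketched in the paragraph immediately preceding the lemma. Set $M := \K_*^\TT(C_0(X))$ and $M_i := \K_*^\TT(C_0(X_i))$; the first step is to identify $M$ with the $\Laur$-module direct sum $\bigoplus_i M_i$. For a disjoint union of locally compact spaces, every function vanishing at infinity on $X$ restricts to $C_0$ on each component, and the restriction to all but finitely many components has sup-norm below any prescribed $\epsilon$; hence $C_0(X)$ is the $c_0$-direct sum of the $C_0(X_i)$, which in turn is the inductive limit (over finite subsets $F$ of the index set) of the finite direct sums $\bigoplus_{i \in F} C_0(X_i)$. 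Since $\TT$-equivariant $\K$-theory is continuous under such inductive limits and additive on finite direct sums, the desired module decomposition $M \cong \bigoplus_i M_i$ follows.

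With this in hand, the dichotomy is immediate from the identity $\ann(M) = \bigcap_i \ann(M_i)$. If the intersection is the zero ideal then $\TTspec(X) = \supp(M) = \C^*$, which is the first alternative. Otherwise, $\ann(M) = (f)$ for some nonzero Laurent polynomial $f$; each $\ann(M_i)$ contains $(f)$ and is therefore generated by a divisor of $f$. Up to the units $X^n$, a polynomial has only finitely many divisors, so only finitely many distinct ideals $\ann(M_i)$ can occur, and consequently only finitely many distinct spectra $\TTspec(X_i) = Z_{f_i}$ arise.

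Each such spectrum $Z_{f_i}$ is finite as the zero set of a nonzero polynomial. For the union identity, observe that $(f) = \bigcap_i (f_i)$ coincides with the ideal generated by the least common multiple of the (finitely many) distinct $f_i$, and the zero set of an lcm is the union of the zero sets of its factors, yielding
\[ \TTspec(X) \;=\; Z_f \;=\; \bigcup_i Z_{f_i} \;=\; \bigcup_i \TTspec(X_i). \]

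The sole non-algebraic ingredient is the module decomposition in the first paragraph, which depends on the continuity of $\TT$-equivariant $\K$-theory under C*-inductive limits and on the compatibility of Kasparov external product (hence the $\Rep(\TT)$-action) with this limit. This is standard, and once it is granted the remainder is a purely algebraic consequence of the annihilator identity for direct sums combined with the PID structure of $\Laur$ already exploited earlier in the section.
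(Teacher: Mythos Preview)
Your proof is correct and follows essentially the same route as the paper's. The paper's own argument is terse: it asserts the module decomposition \(\K^*_\TT(X)=\bigoplus_i \K^*_\TT(X_i)\) in one line and then appeals to the paragraph immediately above the lemma, which records the annihilator identity \(\ann(\bigoplus M_i)=\bigcap_i\ann(M_i)\) and the observation that a nonzero ideal in \(\Laur\) has only finitely many over-ideals. You supply exactly these ingredients, but with more care: you justify the direct-sum decomposition via the \(c_0\)-direct sum and continuity of \(\K^\TT\) under inductive limits, and you spell out the union identity \(\TTspec(X)=\bigcup_i\TTspec(X_i)\) via the lcm description of \(\bigcap_i(f_i)\), which the paper leaves implicit.
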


\begin{proof} 
\(\K^*_\TT (X) = \oplus_{i \in \Lambda} \K^*_\TT (X_i)\)
and the result follows from the preceding remarks. 
\end{proof}

\begin{example}
\label{ex:points}
Let \(\TT\) act on \(X_n \defeq \TT\) with \(t\cdot s 
\defeq t^ns\). Let \(\Omega_n \subset \TT\) denote the subgroup of 
\(n\)th complex root of unity. Then \(X_n \cong \TT/\Omega_n\) with 
\(\TT\) acting by translation on the quotient. Thus \(\K^*_\TT (X_n) \cong 
\Rep (\Omega_n)\), and the \(\Laur \cong \Rep (T)\) module structure 
is by restriction of representations, \emph{i.e.} by restrictions of 
polynomials to \(\Omega_n \subset \C^*\). The support is 
\(\Omega_n\), thus \(\TTspec (X_n) = \Omega_n\). 

Now let
 \(X = \TT\times \N\) with \(\TT\) acting as above
  in the \(n\)th copy of \(\TT\).  By Lemma \ref{lem:adding}, the 
\(\TT\)-spectrum of \(X\) is \(\C^*\), although there is 
no stationary point. 
\end{example}

The proof of Theorem \ref{thm:commutative_case} follows 
from the following two lemmas. 

\begin{lemma}
\label{lem:excision}
Let \(X\) be any \(\TT\)-space and
 \(Y \subset X\) be a closed \(\TT\)-invariant subspace of 
\(X\). Then 
\[ \TTspec (X) \subset \TTspec (Y) \, \cup \, \TTspec (X- Y).\]

\end{lemma}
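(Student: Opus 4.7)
The plan is to prove this by invoking the standard six-term exact sequence in $\TT$-equivariant $\K$-theory associated to the closed-open decomposition of $X$ and then doing a purely algebraic argument about annihilators.

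More precisely, the closed $\TT$-invariant inclusion $Y \hookrightarrow X$ and the complementary open inclusion $X - Y \hookrightarrow X$ give a $\TT$-equivariant short exact sequence of commutative C*-algebras
\[ 0 \to C_0(X - Y) \to C_0(X) \to C_0(Y) \to 0. \]
Applying $\TT$-equivariant $\K$-theory produces a cyclic six-term exact sequence relating $\K^*_\TT(X-Y)$, $\K^*_\TT(X)$, and $\K^*_\TT(Y)$. The first step is to observe that all the maps in this six-term sequence are $\Rep(\TT)$-module maps; this is immediate from naturality of the equivariant Kasparov product (the module structure is given by external product with elements of $\KK^\TT(\C, \C)$, and the boundary and restriction maps are composition with fixed $\KK^\TT$-elements, so they commute with external product). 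Consequently the sequence is an exact sequence of $\Laur$-modules.

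Next I argue algebraically. Let $z \in \C^*$ with $z \notin \TTspec(Y) \cup \TTspec(X - Y)$. By definition of support, there exist $f_1, f_2 \in \Laur$ with $f_1(z) \neq 0$, $f_2(z) \neq 0$, and
\[ f_1 \cdot \K^*_\TT(Y) = 0, \qquad f_2 \cdot \K^*_\TT(X - Y) = 0. \]
Pick any $m \in \K^*_\TT(X)$ and let $\bar m \in \K^*_\TT(Y)$ be its image under the restriction map. Since $f_1 \bar m = 0$, exactness of the six-term sequence guarantees that $f_1 m$ is the image of some $m' \in \K^*_\TT(X - Y)$ under the boundary/inclusion map. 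As that map is a module map, $f_2 f_1 m$ is the image of $f_2 m' = 0$, so $f_1 f_2 m = 0$. This holds for every $m$, so $f_1 f_2 \in \ann(\K^*_\TT(X))$, and $(f_1 f_2)(z) = f_1(z) f_2(z) \neq 0$. Hence $z \notin \TTspec(X)$, giving the desired inclusion.

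The only real thing to check is that the six-term sequence is $\Laur$-linear, and that is formal from Kasparov calculus. Everything else is the elementary fact that if an ideal $I$ kills both ends of an exact triangle of modules, then $I^2$ kills the middle — applied with the ideals generated by $f_1$ and $f_2$, both of which avoid the point $z$.
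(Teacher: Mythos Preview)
Your argument is correct and is essentially the same as the paper's: both use the six-term exact sequence attached to $0 \to C_0(X-Y)\to C_0(X)\to C_0(Y)\to 0$ and the elementary observation that if $f_1$ kills $\K^*_\TT(Y)$ and $f_2$ kills $\K^*_\TT(X-Y)$ then $f_1 f_2$ kills $\K^*_\TT(X)$. The paper simply takes $f_1=f_2=f$ in the common annihilator and concludes $f^2$ kills $\K^*_\TT(X)$, but the mechanism is identical; your explicit remark that the maps in the six-term sequence are $\Laur$-linear is a point the paper uses without comment.
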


\begin{proof}
Consider the \(6\)-term exact sequence of \(\TT\)-equivariant 
\(\K\)-theory groups associated to the exact sequence 
\[ 0 \to C_0(X- Y) \xrightarrow{i} C(X) \xrightarrow{r} C_0(Y) \to 0.\]
Let \(f\in \Laur\) annihilate \(\K^*_\TT (X-Y)\) and 
\(\K^*_\TT (Y)\). Then if \(a \in \K^0_\TT (X)\), 
\(0 = f\cdot r_*(a) = r_*(f\cdot a)\) implies \(f\cdot a = i_*(a')\) 
some \(a'\in \K^0_\TT(X-Y)\) and then \(f^2\cdot a = i_*(f \cdot a')= 0\)
so \(f^2\) annihilates \(\K^0_\TT (X)\). Similarly \(f^2\) annihilates 
\(\K^1_\TT (X)\).  Thus 
\[ f\in \textup{ann} \bigl( \K^*_\TT (X- Y)\bigr) \,  \cap \,  
\textup{ann}\bigl( \K^*_\TT (Y)\bigr) \Rightarrow f^2\in  \textup{ann}\bigl( 
\K^*_\TT (X)\bigr).\]
Hence \(\supp \bigl( \K^*_\TT (X)\bigr)\) is contained in  
\(Z_{f^2} = Z_f\) for any \(f\in \textup{ann} \bigl( \K^*_\TT (X- Y)\bigr) \,  \cap \,  
\textup{ann}\bigl( \K^*_\TT (Y)\bigr)\). The result now follows.
\end{proof}

\begin{lemma}
\label{lem:spec_for_slices}
If \(X \defeq \TT\times_H Y\) for some closed subgroup 
\(H \subset \TT\) and some \(H\)-space \(Y\), then 
\(\TTspec (X)\subset H\). In particular, if \(H\) is a 
proper subgroup, then the \(\TT\)-spectrum of \(X\) 
consists of a set of \(n\)th roots of unity, where 
\(n\) is the cardinality of \(H\). 
\end{lemma}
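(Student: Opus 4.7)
The plan is to exploit the fact that $\TT \times_H Y$ is an induced space, so equivariant $\K$-theory for the $\TT$-action on $X$ coincides with $H$-equivariant $\K$-theory of $Y$; the $\Rep(\TT)$-module structure on the latter then factors through $\Rep(H)$ by restriction, and the ideal of polynomials vanishing on $H$ gives the support bound.

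First I would invoke the induction isomorphism
\[
\K^*_\TT \bigl( \TT \times_H Y \bigr) \;\cong\; \K^*_H(Y),
\]
which is a standard feature of equivariant $\K$-theory (and more generally of $\KK$-theory for proper actions); concretely, $C_0(\TT \times_H Y) = \textup{Ind}_H^\TT C_0(Y)$ and the induction functor is compatible with $\K$-theory in this manner. What is important for us is not just the additive isomorphism, but that it intertwines the $\Rep(\TT)$-action on the left with the $\Rep(\TT)$-action on the right obtained by pulling back along the restriction-of-characters homomorphism $\Rep(\TT) \to \Rep(H)$. This compatibility is an immediate consequence of how external product interacts with induction: a class $\alpha \in \KK^\TT(\C,\C)$ acts on an induced class by the corresponding $H$-class $\alpha|_H \in \KK^H(\C,\C)$.

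Next I would interpret the restriction map geometrically. Under the isomorphism $\Rep(\TT) \cong \Laur$ given by characters, and $\Rep(H) \cong \C[H]$ given likewise, the restriction map sends a Laurent polynomial $f$ to its restriction $f|_H$ to $H \subset \C^*$. Hence its kernel is the ideal
\[
I_H \defeq \{ f \in \Laur \mid f|_H = 0 \},
\]
whose zero locus in $\C^*$ is exactly $H$. Since the $\Rep(\TT)$-action on $\K^*_H(Y)$ factors through this restriction, every $f \in I_H$ annihilates $\K^*_\TT(X)$, so $I_H \subset \ann\bigl( \K^*_\TT(X)\bigr)$, which gives
\[
\TTspec(X) \;=\; \supp\bigl( \K^*_\TT(X) \bigr) \;\subset\; Z(I_H) \;=\; H.
\]

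Finally, for the particular case of proper $H \subsetneq \TT$: such an $H$ is necessarily finite cyclic, namely $H = \Omega_n$ for $n = |H|$, which is precisely the group of $n$th roots of unity in $\TT$. So $\TTspec(X) \subset \Omega_n$ as claimed. The main obstacle is really just the first step, making sure that the induction isomorphism $\K^*_\TT(\TT \times_H Y) \cong \K^*_H(Y)$ is $\Rep(\TT)$-equivariant with the restriction-of-characters module structure; once this is in place the rest is pure commutative algebra using that $Z(I_H) = H$.
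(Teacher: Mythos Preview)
Your proposal is correct and follows essentially the same line as the paper: invoke the induction isomorphism \(\K^*_\TT(\TT\times_H Y)\cong \K^*_H(Y)\), observe that the \(\Rep(\TT)\)-module structure factors through the restriction \(\Rep(\TT)\to\Rep(H)\), and conclude that any polynomial vanishing on \(H\) annihilates the module. Your write-up is simply more explicit about the kernel \(I_H\) and the compatibility of induction with the module structure, but the argument is the same.
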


\begin{proof}
The \(\Rep (\TT)\)-module structure on \(\K^*_\TT (X) \cong 
\K^*_H (Y)\) factors through the restriction map 
\(\Rep (\TT) \to \Rep (H)\) and the \(\Rep (H)\)-module structure 
on \(\K^*_H(Y)\). If \(f\) is a polynomial which vanishes on 
\(H\subset \TT\) then it restricts to zero in \(\Rep (H)\) and hence acts by 
zero on \(\K^*_H(Y) \cong \K^*_\TT (X)\). Hence \(\TTspec (X) \subset H\) as 
claimed. 
\end{proof}

\begin{remark}\label{rem:slices}
We remind the reader of two easy and well-known 
facts about induced spaces. 
\begin{enumerate}
\item Induced spaces
 \(W = \TT\times_HY\) from a subgroup \(H \subset \TT\)  
 are characterised amoung
\(\TT\)-spaces as those admitting a \(\TT\)-map \(\varphi \colon  
W \to \TT/H\). We can recover \(Y\) from \(\varphi\) as the 
fibre over the identity coset in \(\TT/H\). 
\item We often call 
induced spaces \emph{slices}. Since we can always restrict a 
\(\TT\)-map to a \(\TT\)-invariant subspace, 
any \(\TT\)-invariant subspace of a slice is a slice too. 
\item A theorem of Palais (see \cite{Palais:Slices}) 
asserts that any \(\TT\)-space can be
covered by \emph{open} slices using stabilizer subgroups of the action. That is, 
 if \(X\) is any \(\TT\)-space and \(x\in X\), then 
 there exists an open subset 
\(U \subset X\) with \(x\in U\), and  a \(\TT\)-map  
\(\varphi \colon U \to \TT/H\) where \(H \defeq \TT_x\) is the 
stabilizer of \(x\).  (This result holds more generally for actions of 
\emph{Lie groups}.) 

\end{enumerate}
Note that if \(\varphi \colon U \to \TT/H\) is a slice with 
\(H = \TT_x\) for some \(x\in U\), then 
\( \TT_y \subset \TT_x\) for any \(y \in U\). 
\end{remark}

\begin{lemma}
\label{lem:lemmas}
Let \(X\) be a (locally compact) 
\(\TT\)-space. 
\begin{enumerate}
\item If \(X\) has no stationary points, then \(\K^*_\TT (X)\)
 is a torsion module and 
  \(\TTspec (Y)\) is a finite subset of \(\TT\) for every pre-compact 
  \(\TT\)-invariant subset \(Y \subset X\). Furthermore, 
  \(\TTspec (Y) \subset \cup_{y \in \overline{Y}} \TT_y\). 
\item If \(\TTspec (X)\) is finite, \(F\subset X\) is the stationary set, 
then \(\K^*_\TT (F) = 0\) and the \(\TT\)-equivariant *-homomorphism 
\(C_0(X- F)  \to C_0(X)\) determines an 
isomorphism \(\K^*_\TT (X- F) \cong \K^*_\TT (X)\) of 
\(\Laur\)-modules. 
\end{enumerate} 

\end{lemma}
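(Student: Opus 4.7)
The plan is to prove (i) by combining Palais's slice theorem (Remark \ref{rem:slices}(iii)) with iterated excision, and then to deduce (ii) from (i) via a dual excision argument together with the observation that $\K^*_\TT(F)$ is a \emph{free} $\Rep(\TT)$-module when $\TT$ acts trivially on $F$.

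For (i), since $X$ has no stationary points, each $x \in X$ has proper---hence finite---stabilizer $\TT_x$ and admits an open $\TT$-invariant slice neighbourhood $U_x \cong \TT \times_{\TT_x} V_x$, so that $\TTspec(U_x) \subset \TT_x \subset \TT$ by Lemma \ref{lem:spec_for_slices}. For a precompact $\TT$-invariant $Y$, compactness of $\overline Y$ yields a finite subcover $U_{x_1}, \ldots, U_{x_n}$ with $x_i \in \overline Y$. I would argue by induction on $n$: splitting $Y = (Y \cap U_{x_1}) \sqcup (Y \setminus U_{x_1})$, the piece $Y \cap U_{x_1}$ is a $\TT$-invariant subspace of the slice $U_{x_1}$, hence itself a slice with stabilizer $\subset \TT_{x_1}$ (Remark \ref{rem:slices}(ii)), while $Y \setminus U_{x_1}$ is closed, $\TT$-invariant, precompact, with closure covered by the remaining $n-1$ slices; Lemma \ref{lem:excision} together with the inductive hypothesis then yields $\TTspec(Y) \subset \bigcup_i \TT_{x_i}$. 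Since $x_i \in \overline Y$, this is a finite subset of $\bigcup_{y \in \overline Y} \TT_y \subset \TT$, giving both the spectrum bound and the furthermore clause. The torsion claim for $\K^*_\TT(X)$ itself then follows by writing $\K^*_\TT(X) = \varinjlim \K^*_\TT(Y)$ over precompact open $\TT$-invariant $Y \subset X$: each $\K^*_\TT(Y)$ has nonzero annihilator (its support being finite), so every class in the limit is annihilated by some nonzero Laurent polynomial.

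For (ii), I would first apply (i) to $X - F$, which has no stationary points by construction; this gives torsion of $\K^*_\TT(X - F)$, and, writing $\K^*_\TT(X-F) = \varinjlim \K^*_\TT(Y)$ over precompact $Y \subset X - F$, yields $\TTspec(X - F) \subset \TT$ once one notes that for a torsion module the support is the union of the zero loci of annihilators of its individual elements. Next I would establish the ``dual excision'' inclusion $\TTspec(F) \subset \TTspec(X) \cup \TTspec(X - F)$ by mimicking the proof of Lemma \ref{lem:excision}: for $f \in \ann(\K^*_\TT(X))$, $g \in \ann(\K^*_\TT(X - F))$ and $b \in \K^*_\TT(F)$, $\Rep(\TT)$-linearity of the six-term sequence gives $\partial(gb) = g\partial b = 0$, so $gb = \pi(a)$ for some $a \in \K^*_\TT(X)$, whence $fgb = \pi(fa) = 0$; therefore $fg \in \ann(\K^*_\TT(F))$, and intersecting the zero sets $Z_{fg}$ over all such pairs delivers the desired inclusion.

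Finally, because $\TT$ acts trivially on $F$, the identification $C_0(F) \rtimes \TT \cong C_0(F) \otimes C^*(\TT)$ combined with the K\"unneth theorem over $\C$ gives $\K^*_\TT(F) \cong \K^*(F) \otimes_\C \Rep(\TT)$---a free $\Laur$-module whose support is either $\emptyset$ or all of $\C^*$. Combined with $\TTspec(F) \subset \TTspec(X) \cup \TT$, a proper subset of $\C^*$ (being $\TT$ plus finitely many extra points), this forces $\TTspec(F) = \emptyset$, i.e.\ $\K^*_\TT(F) = 0$. The isomorphism $\K^*_\TT(X - F) \cong \K^*_\TT(X)$ then drops out of the six-term exact sequence. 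I expect the crucial step to be this freeness observation: it is precisely what enables the finite-spectrum hypothesis on $X$ to force vanishing of $\K^*_\TT(F)$ rather than merely a restriction on its support.
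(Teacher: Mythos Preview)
Your argument for (i) is correct and essentially identical to the paper's: cover $\overline{Y}$ by finitely many slices via Palais, induct on the number of slices using Lemmas~\ref{lem:spec_for_slices} and~\ref{lem:excision}, then pass to the direct limit for the torsion claim.

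Your argument for (ii), however, contains a genuine gap. The assertion that ``for a torsion module the support is the union of the zero loci of annihilators of its individual elements'' is false. Example~\ref{ex:points} in the paper is a counterexample: the module $\bigoplus_n \Rep(\Omega_n)$ is torsion, each element is annihilated by a polynomial whose zeros are roots of unity, yet the annihilator of the whole module is zero and the support is all of $\C^*$. So from part (i) you only know that $\K^*_\TT(X-F)$ is \emph{torsion}; you cannot conclude that $\TTspec(X-F)\subset\TT$, nor that $\ann\bigl(\K^*_\TT(X-F)\bigr)\neq 0$. Your dual-excision step then yields no information, since taking $g$ only from the zero ideal gives $fg=0$ and $Z_0=\C^*$.

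The paper avoids this by working directly with the torsion/free dichotomy rather than with spectra. Since $\TTspec(X)$ is finite, $\K^*_\TT(X)$ has nonzero annihilator and the restriction $\K^*_\TT(X)\to\K^*_\TT(F)$ lands in the torsion part of a free module, hence vanishes. The six-term sequence then gives an \emph{injection} $\K^{*+1}_\TT(F)\hookrightarrow\K^*_\TT(X-F)$; the target is torsion by part (i), and a free module injecting into a torsion module is zero. This uses only ``torsion'' for $X-F$, not a nonzero global annihilator. Your argument can be repaired along the same lines: for each $b\in\K^*_\TT(F)$ choose $g_b\neq 0$ killing the torsion element $\partial b$, then $fg_b$ kills $b$ in the free module $\K^*_\TT(F)$, forcing $b=0$. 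But as written, the step through $\TTspec(X-F)\subset\TT$ does not go through.
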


\begin{remark}
The same arguments prove a stronger version of the 
second statement: that the \(\TT\)-equivariant *-homomorphism 
\( C_0(X- F) \to C_0(X)\) is invertible in 
\(\KK^\TT ( C_0(X- F), C_0(X))\). 
%See 
%Theorem \ref{thm:restriction_to_stationary}.

The condition of having finite \(\TT\)-spectrum thus implies that the 
stationary set \(F\) is \emph{homologically trivial}: that is, \(\K^*_\TT (F) = 0\). 
 Compare 
the ray \([0,\infty) \) with the trivial action.
\end{remark}

\begin{proof}
For the first statement, 
\(\K^*_\TT (X)\) is the inductive limit of the 
\(\K^*_\TT (Y)\), as \(Y \subset X\) ranges over the 
pre-compact \(\TT\)-invariant subsets of \(X\). Therefore, 
if we can prove that the annihilator ideal of 
\(\K^*_\TT (Y)\) is nonzero for every pre-compact 
\(\TT\)-invariant subset \(Y \subset X\), we will be 
done. This is equivalent to showing that \(\TTspec (Y)\) is 
finite for all such \(Y\).
 If \(Y \subset X\) is precompact, 
with closure \(\overline{Y}\), then we can cover 
\(\overline{Y}\) by finitely many \(\TT\)-slices 
\(\varphi \colon U_i \to \TT/H_i\) using 
stabilizer subgroups \(H_i\) of the action on \(\overline{Y}\).
 This gives a finite 
cover of \(Y\) itself by open slices (as in Remark
 \ref{rem:slices}, intersecting a slice with a 
 \(\TT\)-invariant subset always results in a slice.) Furthermore, 
 since the \(H_i\) are stabilizer groups of points in \(\overline{Y}\)
  and the action 
 has no stationary points, all \(H_i\) are finite subgroups of 
 \(\TT\). 

Now prove the result 
by induction on the minimal number of slices required to cover \(Y\), 
which we have just observed is finite.  
It can be covered by a single slice, then it is itself a slice, 
and the result follows from Lemma \ref{lem:spec_for_slices}. 
If the result is true for precompact subsets of 
\(X\) that can be covered by  
\(< n\) slices, and \(Y\) can be 
covered by \(n\) slices with domains 
\(U_1, \ldots , U_n\) and subgroups \(H_i\), 
then the closed \(\TT\)-invariant 
subspace \(Y - U_n\) of \(Y\) can be covered by 
\(n-1\) slices so by inductive hypothesis 
\(\TTspec(Y- U_n) \subset 
\bigsqcup_{x\in Y} \TT_x \subset \TT\) is finite.  
The result for 
\(Y\) now follows from 
Lemma \ref{lem:excision}.

For the second statement, consider the exact sequence of 
\(\TT\)-C*-algebras 
\[ 0 \to C_0(X- F) \rightarrow C_0(X) \rightarrow C_0(F) \rightarrow 0.\]
This induces an exact sequence of \(\K^*_\TT \)-groups. The restriction map 
\(\K^*_\TT (X) \to \K^*_\TT (F)\) must vanish, because we have assumed that 
\(X\) has finite 
spectrum, (\emph{i.e.} \(\K^*_\TT (X)\) is torsion)
whereas \(\K^*_\TT (F)\) is free. This implies that we have a pair of 
short exact 
sequences 
\[ 0 \rightarrow \K^{*+1}_\TT (F)
%\cong \K^{*+1}(F)\otimes \Laur 
\rightarrow \K^*_\TT (X- F) \to 
\K^*_\TT (X) \rightarrow 0\]
for \(* = 0,1\). But \(X- F\)  has no stationary points, 
so from the first part of this Lemma,
 \(\K^*_\TT (X- F)\) is torsion. 
 Now a free \(\Laur\)-module 
\(\K^{*+1}_\TT (F)\) which injects into a torsion module 
\(\K^*_\TT (X- F)\) can only be the zero module. 
Hence \(\K^*_\TT (F) = 0\), \(*=0,1\) and 
\(C_0(X- F) \to C(X)\) induces an isomorphism 
on \(\K^*_\TT\)-theory. 
\end{proof}

\begin{proof} (\emph{Of Theorem \ref{thm:commutative_case}}.) 
Assume that \(\TTspec (X) \not= \C^*\). 

Then since \(X\) has finite \(\TT\)-spectrum, \(\K^*_\TT (X) \cong 
\K^*_\TT (X- F)\) as \(\Laur\)-modules, 
where \(F\subset X\) is the 
stationary set, by Lemma \ref{lem:lemmas}. In particular, 
\(\TTspec (X) = \TTspec (X- F)\) so by replacing 
\(X\) by \(X- F\) we may assume that \(X\) itself has 
no stationary points. 

Now by the preliminary discussion following Definition 
\ref{def:support}, since \(\K^*_\TT(X)\) has nonzero annihilator 
ideal, the support is the set of eigenvalues of \(X\) acting on 
\(\K^*_\TT(X)\). Suppose \(\lambda\) is an eigenvalue, \(v\in \K^*_\TT (X)\) 
an eigenvector for \(\lambda\). Since \(\K^*_\TT (X)\) is the inductive 
limit of the \(\K^*_\TT (Y)\) as \(Y\subset X\) ranges over the 
precompact \(\TT\)-invariant subsets of \(X\), 
there exists precompact \(Y\) and \(w\in \K^*_\TT (Y)\) mapping to 
\(v\). By Lemma \ref{lem:lemmas}, since there are no stationary 
points, \(\K^*_\TT (Y)\) has a finite annihilator ideal, say generated
by \(g\in \Laur\), and moreover, the support of \(\K^*_\TT(Y)\) is 
contained in the unit circle. Since \(gw= 0\), \(gv = 0\), and as 
\(gv = g(\lambda) v\), 
\(\lambda\) is a root of \(g\), whence \(\lambda\) is contained in the unit circle
as claimed. 

This also proves that \(\lambda\) is an \(n\)th root of unity where 
\(n\) is the cardinality of some isotropy group of the action (on \(Y\)).  
 
 \end{proof}
 
%The following is immediate. 

%\begin{corollary}
%If \(X\) is a compact, free \(\TT\)-space then \(\TTspec(X) \subset \{1\}\). 
%\end{corollary}

\section{The \(\TT\)-spectra of C*-algebras}

Let \(B\) be a C*-algebra equipped with an automorphism
\(\sigma\). Then \(A\defeq B\ltimes \Z\) is a 
\(\TT\)-C*-algebra using the dual action 
\[ z (\sum_{n\in \Z} b_n[n]) \defeq \sum_{n\in \Z} z^nb_n[n].\]
Hence it has a \(\TT\)-spectrum. 
The Green-Julg theorem asserts
 that the \(\TT\)-equivariant \(\K\)-theory of \(A\) is 
isomorphic to the \(\K\)-theory \(\K_*(A\rtimes \TT)\) of the 
cross-product. By Takai-Takesaki duality, this agrees 
with \(\K_*(B)\).

\begin{proposition}
Let \(B\) be a C*-algebra and
\(\sigma \in \Aut (B)\). Endow the cross-product
\(A \defeq B\rtimes_\sigma \Z\) with the
dual action of \(\TT \cong \widehat{\Z}\).
The automorphism induces an invertible linear map 
\(\sigma_*\colon \K_*(B) \to \K_*(B)\) and hence a 
\(\Laur\) module structure on 
\(\K_*(B)\). This module is naturally isomorphic to 
\(\K^\TT_*(A)\). 

In particular, \emph{if \(\K_*(B)\) is finite dimensional over \(\C\)},  then 
\[ \TTspec (B\rtimes \Z) = \textup{Spec}(\sigma_*),\]
with \(\textup{Spec}(\sigma_*)\) the set of eigenvalues
of the invertible linear map \(\sigma_*\in \End_\C\bigl( \K_*(B)\bigr)\). 
\end{proposition}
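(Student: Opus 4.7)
\emph{Plan.} The proof proceeds in two steps. First, I combine the Green-Julg isomorphism $\K_*^\TT(A) \cong \K_*(A\rtimes\TT)$ and Takai-Takesaki duality $A\rtimes\TT \cong B\otimes \Comp(\ell^2(\Z))$ to obtain an abelian-group isomorphism $\K_*^\TT(A) \cong \K_*(B)$, and then identify the induced $\Laur$-module structure on $\K_*(B)$ with the one in which $X$ acts as $\sigma_*$. Second, I apply the general theory from the preceding section: since $\K_*(B)$ is finite-dimensional over $\C$, it is a finitely generated torsion $\Laur$-module, whose annihilator is generated by the minimal polynomial of $X = \sigma_*$ and whose support (by the discussion following Definition~\ref{def:support}) is the zero set of that polynomial, i.e., $\Spec(\sigma_*)$.

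\emph{Matching module structures.} The key intermediate assertion is that the $\Laur = \Rep(\TT)$-module structure on $\K_*(A\rtimes\TT)$ transported from $\K_*^\TT(A)$ via Green-Julg is implemented by the dual $\widehat{\TT} \cong \Z$-action on $A\rtimes\TT$: that is, $X \in \Laur$ acts on $\K_*(A\rtimes\TT)$ as the $\K$-theoretic map induced by the generating automorphism $\widehat{\widehat{\sigma}}_1$ of this dual $\Z$-action. Then Takai-Takesaki duality, in its equivariant form, identifies the doubly-dual $\Z$-action on $A\rtimes\TT$, under the isomorphism $A\rtimes\TT \cong B\otimes\Comp(\ell^2(\Z))$, with $\sigma \otimes \mathrm{Ad}(\lambda)$, where $\lambda$ is the bilateral shift on $\ell^2(\Z)$. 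Since $\mathrm{Ad}(\lambda)$ is inner in the multiplier algebra of $\Comp(\ell^2(\Z))$, the tensor product $\sigma \otimes \mathrm{Ad}(\lambda)$ differs from $\sigma \otimes \mathrm{id}$ by an inner automorphism of $M(B\otimes \Comp)$; thus it induces the same map on $\K_*$, namely $\sigma_*$ under the stability isomorphism $\K_*(B\otimes\Comp) \cong \K_*(B)$.

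\emph{Main obstacle.} The hard part is the claim that Green-Julg carries the $\Rep(\TT)$-module structure to the module structure implemented by the dual $\Z$-action on $\K_*(A\rtimes\TT)$. The cleanest route is to trace the generator $X \in \Rep(\TT) = \KK^\TT(\C,\C)$ through Green-Julg: under the Pontryagin/Fourier identification $C^*(\TT) \cong c_0(\Z)$ it corresponds to the class $[\delta_1] \in \K_0(C^*(\TT))$, and by naturality of Green-Julg with respect to equivariant Kasparov product, the resulting action on $\K_*(A\rtimes\TT)$ coincides with the one given by the dual $\Z$-automorphism. Once this point is nailed down, all other steps are routine bookkeeping, and the proposition follows by combining the module identification with the eigenvalue characterization of supports recalled at the start of the plan.
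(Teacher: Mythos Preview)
Your proposal is correct and follows essentially the same route as the paper's proof: Green--Julg plus Takai--Takesaki duality identify \(\K_*^\TT(A)\) with \(\K_*(B)\), the \(\Laur\)-action transports to the action of \(\sigma_*\), and the finite-dimensional case then reduces to the eigenvalue characterisation of supports from \S2. The only difference is bibliographic: the paper disposes of the module-structure matching by citing Blackadar, Proposition 11.8.3, whereas you sketch the argument directly via the dual \(\Z\)-action and the innerness of \(\Ad(\lambda)\) on \(\Comp(\ell^2(\Z))\); your sketch is precisely what that citation encapsulates.
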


\begin{proof}

This follows from Blackadar Proposition 11.8.3,
which asserts that the isomorphism
\[ \K_*(B) \cong \K_*(B\ltimes \Z\ltimes \TT) = 
 \K_*(A\rtimes \TT) \cong \K^*_\TT(A)\]
of Takai-Takesaki duality and the Green-Julg theorem,
intertwines the group homomorphism
\( \sigma_*\) and the group homomorphism of sclar 
multiplication by \(X\in \Laur \cong \Rep (\TT)\).

Furthermore, if \(\K^*_\TT (A)\) has finite
dimension, then \(\TTspec (A)\) has nonzero annihilator 
ideal and the support is the set of non-zero
eigenvalues of the linear map \(X\) because it is the 
zero set of the minimal polynomial of the linear map \(X\).
\end{proof}

\begin{remark}
\label{rem:bs_duality}
Baaj-Skandalis duality (see 
\cite{Baaj-Skandalis:Duality}) is a functor 
\(\KK^\TT \to \KK^\Z\) which on objects sends a
\(\TT\)-C*-algebra \(B\) to the \(\Z\)-C*-algebra 
\(B\defeq A\rtimes \TT\), with the dual action and 
sends a \(\TT\)-equivariant *-homomorphism 
\(A \to A'\) to the (obvious) induced \(\Z\)-equivariant map 
\(B\defeq A\rtimes \TT \to B' \defeq A'\rtimes \TT\). 
Baaj and Skandalis extend this to a 
natural \emph{isomorphism}
\[ \KK^\TT_*(A,A') \cong \KK^\Z_*(A\rtimes \TT, A'\rtimes \TT)\] 
of equivariant \(\KK\)-groups. Note that this 
transformation maps an induced space 
\(X = \TT\times_H Y\) for some \(H\)-space \(Y\) and a 
closed subgroup \(H\) of \(\TT\) to the \(\Z\)-C*-algebra 
\[ C_0(X)\rtimes \TT\cong C_0(Y)\rtimes H\]
with an appropriate dual action of \(\Z\). The important point is that this 
\(\Z\)-action \emph{factors 
through a periodic action}, \emph{i.e.} factors through the homomorphism 
\(\Z\to H \cong \Z/n\) for some \(n\), and a \(\Z/n\)-action. 

Under the Baaj-Skandalis transformation, the 
\(\TT\)-spectrum of a \(\TT\)-C*-algebra \(A\) corresponds, as we 
have observed above, to the spectrum, in the usual sense, 
of the endomorphism of \(\K_*(A)\) by the generator 
\(1\in \Z\) of the \(\Z\)-action. Hence if the \(\Z\)-action 
is periodic, then the corresponding 
linear map has finite order, and hence its spectrum consists of 
roots of unity in the circle. This is, roughly, then, the 
counterpart of the situation in the 
first section, in the category \(\KK^\Z\).  
\end{remark}

For instance let \(A = \C\) with the trivial automorphism. 
Applying the proposition gives that the
 \(\TT\)-spectrum of \(C^*(\Z)\) with its dual 
action of \(\TT\) is the single point \(\{1\} \subset \C^*\).

\begin{example}
The \(\TT\)-spectrum of the irrational rotation algebra 
\(A_\theta \defeq C(\TT)\rtimes_{R_\theta} \Z\)
 with the dual action of \(\TT\)
 is also \( \{1\} \) because \(\sigma_*\) is the identity map 
 on \(\K^*(\TT) \). 
\end{example}

\begin{example}
\label{ex:oa}
Let \(A\) be an integer \(n\)-by-\(n\) matrix with entries either 
\(0\) or \(1\) and assume for simplicity that \(A\) is 
invertible over \(\C\). Then the \(\TT\)-spectrum of the 
associated Cuntz-Krieger algebra \(O_A\) is 
the set of eigenvalues of \(A\). Indeed, 
\(O_A\cong F_A\ltimes \Z\) where \(F_A\) is an 
appropriate 
AF-algebra, and \(\cong\) means Morita equivalence. 
It is well-known and easily checked from the Bratteli diagram, 
that the \(\K\)-theory of \(F_A\) is \(\cong \C^n\), and 
the action of \(\Z\) on it is by the matrix \(A\). Hence 
the \(\TT\)-spectrum of \(O_A\) is the spectrum of \(A\). 
\end{example}

\begin{corollary}
\label{cor:cuntz}
The Cuntz-Krieger algebra \(O_A\) is not 
\(\KK^\TT\)-equivalent to any commutative 
\(\TT\)-C*-algebra as soon as the integer matrix 
\(A\) has some eigenvalue of modulus \(\not= 1\).  
\end{corollary}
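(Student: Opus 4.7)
The plan is to combine Theorem \ref{thm:commutative_case} with the computation of the \(\TT\)-spectrum of \(O_A\) in Example \ref{ex:oa}, using the essentially formal fact that the \(\TT\)-spectrum is a \(\KK^\TT\)-equivalence invariant.

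First I would observe that if \(f\in \KK^\TT(A,B)\) is a \(\KK^\TT\)-equivalence, then Kasparov product with \(f\) induces an isomorphism \(f_*\colon \K^\TT_*(A) \to \K^\TT_*(B)\). Because the \(\Rep(\TT)\)-module structure on equivariant \(\K\)-theory is defined via the external Kasparov product with \(\KK^\TT(\C,\C) = \Rep(\TT)\), and because external product is commutative and associative, \(f_*\) is \(\Rep(\TT)\)-linear. Hence \(\K^\TT_*(A)\) and \(\K^\TT_*(B)\) have the same annihilator ideal, and consequently \(\TTspec(A) = \TTspec(B)\).

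Now suppose, for contradiction, that \(O_A\) were \(\KK^\TT\)-equivalent to a commutative \(\TT\)-C*-algebra \(C_0(X)\). By the previous paragraph we would have \(\TTspec(O_A) = \TTspec(X)\). On one hand, Example \ref{ex:oa} identifies \(\TTspec(O_A)\) with the (finite) set of eigenvalues of the integer matrix \(A\); by hypothesis this set contains a point of modulus \(\ne 1\) in \(\C^*\), so it is a nonempty finite subset of \(\C^*\) which is \emph{not} contained in \(\TT\). On the other hand, Theorem \ref{thm:commutative_case} forces \(\TTspec(X)\) to be either all of \(\C^*\) or a finite subset of \(\TT\). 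Neither alternative is compatible with the set of eigenvalues of \(A\): it is finite (in particular not all of \(\C^*\)) and it is not contained in \(\TT\). This contradiction proves the corollary.

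The only genuinely substantive step is the invariance of the \(\TT\)-spectrum under \(\KK^\TT\)-equivalence, and even that is largely formal once one unwinds the definition of the \(\Rep(\TT)\)-module structure via Kasparov external product. Everything else is a direct quotation of Theorem \ref{thm:commutative_case} and the computation in Example \ref{ex:oa}, so I expect no real obstacle beyond carefully recording the \(\Rep(\TT)\)-linearity of \(f_*\).
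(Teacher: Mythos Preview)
Your proposal is correct and matches the paper's approach exactly: the corollary is stated without proof in the paper because it is meant to follow immediately from Example~\ref{ex:oa} and Theorem~\ref{thm:commutative_case}, together with the (implicit) \(\KK^\TT\)-invariance of the \(\TT\)-spectrum that you spell out. If anything, your write-up is more detailed than the paper's, which simply records the corollary and then remarks afterwards that the argument in fact shows the stronger statement that \(\K^\TT_*(O_A)\) is not isomorphic as a \(\Laur\)-module to \(\K^*_\TT(X)\) for any \(\TT\)-space \(X\).
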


This happens for instance if 
\(A = \left[ \begin{matrix} 1 & 1\\ 1 & 0\end{matrix}\right]\). 

For the benefit of the reader (the result is well-known) we 
prove the following. 

\begin{lemma}
Both \(O_A\) and \(O_A\rtimes \TT \cong F_A\) are in the 
boostrap category \(\mathcal{N}\). 
\end{lemma}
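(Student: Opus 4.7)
The plan is to invoke three standard closure properties of the bootstrap category $\mathcal{N}$: first, every separable AF algebra lies in $\mathcal{N}$ (each finite-dimensional C*-algebra is type~I, hence in $\mathcal{N}$, and $\mathcal{N}$ is closed under countable inductive limits); second, $\mathcal{N}$ is closed under $\KK$-equivalence, and in particular under Morita equivalence; third, $\mathcal{N}$ is closed under taking crossed products by $\Z$. All three can be extracted from \cite{Rosenberg-Schochet:UCT}.

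For $O_A\rtimes \TT$, the identification $O_A\rtimes \TT\cong F_A$ asserted in the lemma is itself a consequence of the Morita equivalence $O_A\sim F_A\rtimes_\sigma\Z$ from Example~\ref{ex:oa} together with Takai--Takesaki duality applied to the dual $\TT$-action. Since $F_A$ is AF, the first closure property puts $O_A\rtimes\TT$ in $\mathcal{N}$ immediately.

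For $O_A$ itself I would start again from the Morita equivalence $O_A\sim F_A\rtimes_\sigma\Z$ of Example~\ref{ex:oa} and apply the third closure property to $B=F_A\in\mathcal{N}$. The most transparent justification of that closure property is the Pimsner--Voiculescu Toeplitz extension
\[ 0\to F_A\otimes\Comp\to \mathcal{T}_\sigma\to F_A\rtimes_\sigma\Z\to 0,\]
in which the middle term $\mathcal{T}_\sigma$ is $\KK$-equivalent to $F_A$; thus $F_A\rtimes_\sigma\Z$ sits in an extension whose other two terms lie in $\mathcal{N}$, and closure of $\mathcal{N}$ under extensions finishes the argument. A final application of Morita invariance transfers the conclusion from $F_A\rtimes_\sigma\Z$ to $O_A$.

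There is no genuine obstacle in this argument; the entire proof is the assembly of well-known facts about $\mathcal{N}$. The only real stylistic choice is whether to spell out the Toeplitz extension explicitly, as above, or simply to cite closure of $\mathcal{N}$ under crossed products by $\Z$ as a known result.
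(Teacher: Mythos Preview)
Your argument is correct. For \(O_A\rtimes\TT\cong F_A\) the paper does exactly what you do: \(F_A\) is AF, hence in \(\mathcal{N}\). For \(O_A\) itself both proofs trap \(F_A\rtimes_\sigma\Z\) in a short exact sequence whose other two terms are already known to lie in \(\mathcal{N}\), and then invoke closure of \(\mathcal{N}\) under extensions together with Morita/\(\KK\)-invariance. The difference is only in which extension is used. You use the Pimsner--Voiculescu Toeplitz extension
\[0\longrightarrow F_A\otimes\Comp\longrightarrow \mathcal{T}_\sigma\longrightarrow F_A\rtimes_\sigma\Z\longrightarrow 0,\]
with \(\mathcal{T}_\sigma\) \(\KK\)-equivalent to \(F_A\). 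The paper instead invokes the Baum--Connes conjecture for \(\Z\) (i.e.\ the \(\KK^\Z_1\)-equivalence between \(C_0(\R)\) with the translation action and \(\C\)) to replace \(O_A\) by the mapping-torus algebra \(C_0(\R,F_A)\rtimes\Z\), and then cuts along the closed invariant subset \(\Z\subset\R\) to obtain
\[0\longrightarrow S\otimes F_A\otimes\Comp\longrightarrow C_0(\R,F_A)\rtimes\Z\longrightarrow F_A\otimes\Comp\longrightarrow 0.\]
Both routes are standard and of comparable length; yours is the one most people would cite off the shelf, while the paper's has the mild advantage of not needing the Toeplitz algebra as an auxiliary object.
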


\begin{proof}
\(F_A\) is an AF algebra so is in \(\mathcal{N}\). 
The Baum-Connes conjecture for \(\Z\) is the 
statement that \(C_0(\R)\) with the \(\Z\)-action by 
translation is \(\KK^\Z_1\)-equivalent
to \(\C\). It follows from this that  
\(O_A = F_A\rtimes \Z\) is 
\(\KK\)-equivalent to \(C_0(\R, F_A)\rtimes \Z\). 
There is an exact sequence 
\[ 0 \rightarrow S\otimes F_A \otimes \Comp 
\rightarrow C_0(\R, F_A)\rtimes \Z \rightarrow
F_A\otimes \Comp \to 0\]
of C*-algebras, obtained by evaluating functions on \(\R\) at the 
integer points \(\Z\subset \R\), a closed and \(\Z\)-invariant subset, 
and using \(C_0(\Z)\rtimes \Z \cong \Comp\). Since 
\(\Comp\) is \(\KK\)-equivalent to \(\C\) both ends are 
in the boostrap category. Hence 
\(C_0(\R, F_A)\rtimes \Z\) 
is also. 
\end{proof}

\begin{remark}
We have actually proved something stronger than 
Corollary \ref{cor:cuntz}, for we have shown that 
the \(\TT\)-equivariant \(\K\)-theory of \(O_A\) is 
not isomorphic in the category of \(\Laur\)-modules 
to the \(\TT\)-equivariant \(\K\)-theory of any 
locally compact Hausdorff \(\TT\)-space. 

\end{remark}

We close this section with some further remarks 
on \(\TT\)-equivariant \(\K\)-theory of Cuntz-Krieger 
algebras, to see \(\TT\)-spectra in a dynamical 
perspective. 

Up to now we have considered \(\K^\TT_*(A)\defeq 
\K_0^\TT(A) \oplus \K^1_\TT(A)\) as simply 
a \(\Laur\)-module without taking into consideration 
the grading. If we consider \(\K^\TT_*(A)\) as a 
\(\Z/2\)-graded 
\(\Laur\)-module, then an invariant of it -- assuming it finite dimensional 
over \(\C\) -- is the rational function 
\begin{equation}
\label{eq:char}
 \textup{char}_A(t) \defeq \frac{\det(1-tX_+)}{\det(1-tX_-)}
 \end{equation}
where \(X_{\pm}\) denotes the action of the generator 
\(X\) on \(\K_{0/1}^\TT (A)\).

If \(A\) and \(B\) are 
\(\KK^\TT\)-equivalent, they have the same 
rational function \eqref{eq:char}. 

The following elementary result about (grading-preserving) 
linear transformations \(X\) 
on a \(\Z/2\)-graded vector space 
can be found in the appendices 
to Hartshorne's book \cite{Hartshorne}:

\begin{equation}
\label{eq:hartshorne}
\textup{char}_A(t) = \exp \bigl( \sum_{n=1}^\infty \textup{trace}_s(X^n) \frac{t^n}{n}\bigr)
\end{equation}
holds, where \(\trace_s\) is the graded trace, the 
difference of the traces of \(X\) acting on \(\K_1^\TT(A)\) and 
\(\K_0^\TT(A)\).

We now specialize to the following situation: 
let \(\phi_T\colon \TT^n \to \TT^n\) be a 
linear automorphism, where 
\(T\in \textup{GL}_n(\Z)\). We assume that 
\(T\) is self-adjoint, so it is diagonalizable over 
\(\C\) with real, nonzero eigenvalues. We can form the 
cross-product \(A \defeq C(\TT^n)\rtimes_{\phi_T} \Z\), which 
is a \(\TT\)-C*-algebra. By the Lefschetz fixed-point theorem, 
\[ \trace_s \bigl( (\phi_T^*)^n\bigr) = (-1)^k \, P_n(\phi_T),\]
because the sign of \(\det (1-T)\) is 
\( (-1)^k\) where \(k\) is the number (including multiplicities) 
of eigenvalues \(\lambda \) 
of \(T\) with \(\lambda >1\). Here \(\trace_s(\phi_T)\) is the graded 
trace of  
the action of \(\phi_T\) on \(\K^*(\TT^n)\), and \(P_n(\phi_T)\) is the 
number of periodic points of order \(n\). 

Putting things together, we see that 
\[\frac{\det (1-tX_+)}{\det(1-tX_-)} = 
\exp \bigl( (-1)^k\cdot \sum_{n=1}^\infty P_n(\sigma) \frac{t^n}{n}\bigr).\]
The right-hand-side is called the 
\emph{Artin-Mazur zeta function} of the map \(\phi_T\) (see 
\cite{Artin-Mazur:Zeta}.) 

To be explicit, if \(n = 2\) and 
 \(T =\left[\begin{matrix} 1 & 1\\1 & 0\end{matrix}\right]\), so \(k=1\), 
\(X_+ = \Id\) and \(X_-\) acts as \(T\) on \(\K^1(\TT^2) \cong 
\C^2\) and so 
\( \textup{char}_{C(\TT)\rtimes_{\phi_T}\Z } (t) = t^2-t-1\) and 
\[ \TTspec \bigl( C(\TT^2)\rtimes_{\phi_T}\Z \bigr) = \{ 1, \frac{1\pm \sqrt{5}}{2}\}.\]
Note that this yields another example of a \(\TT\)-C*-algebra 
not \(\KK^\TT\)-equivalent to a commutative one.

\section{\(\TT\)-equivariant \(\KK\)-theory of smooth manifolds and localization}

If \(R\) is any commutative (unital) ring 
 then any free, finitely generated \(R\)-module 
\(M\) has a well-defined \emph{rank}, and any 
\(R\)-module self-map of \(M\) has a well-defined 
\emph{trace}. We denote these invariants 
by \(\rank_R (M)\) and \(\trace_R (L)\) respectively, 
so that in particular \(\trace_R (\Id) = \rank_R(M)\). 

We will be mainly interested in the case where 
\(R = \Laur\) or a localization of \(R\). 

Consider \(\Laur\) as regular (rational) functions on \(\C^*\). 
In algebraic geometry, if one wants to study the behavior of 
a variety near a point \(z\in \C^*\), then one considers 
the set \(S\) of functions which are nonzero at \(z\), and 
\emph{localizes} \(\Laur\) with respect to this multiplicative set (a 
subset of a ring is a multiplicative set if it includes the unit \(1\) and 
is closed under multiplication.) 

This means that we invert 
all functions which are in \(S\), \emph{i.e.} invert functions 
which do not vanish at \(z\).  We therefore
get all rational functions which are regular at \(z\): 
\[ \Laur_z \cong \{f\in \C(X) \mid f= \frac{h}{g}, \; g(z)\not= 0\}.\]
This is a \emph{local ring}: it has a unique maximal ideal, 
the ideal of \(f\in \Laur_z\) such that \(f(z) = 0\), and 
any \(f\in \Laur\) such that \(f (z) \not= 0\) is invertible in 
\(\Laur_z\). 

Note also that \(\Laur\) embeds in its 
localization(s). 

Localization can be defined for
 any commutative ring \(R\) with no 
zero divisors, at a multiplicative 
subset \(S\) (like the complement of a 
prime ideal) by considering the elements
\(\frac{r}{s}\) in the ring of fractions of \(R\), 
such that \(s\in S\). In this situation, 
\(R\) embeds in its localization. 

For rings with zero divisors, localizations can 
still be defined, but the map from the original ring 
to its localization need not any longer be 
injective. Any element \(r\in R\) such that there 
exists \(s\in S\) so that \(rs= 0\), is  
is killed by localization at \(S\). 

The prime ideals of the localization of a ring \(R\) at 
\(S\) correspond to the prime ideals of \(R\) which 
do not intersect \(S\).

The `localizations' \(\Laur_z\) just discussed, are the 
\emph{stalks} of a sheaf of rings over \(\C^*\) 
with the Zariski topology. For most of this paper, we 
will not use the stalks, but the values of the sheaf on 
Zariski open sets. To fix notation and terminology, we 
state the definition formally. 

\begin{definition}
Let \(f\in \Laur\). The \emph{localization} of \(\Laur\) at 
the Zariski open 
\(U_f\defeq \C^*- Z_f\) is the ring obtained 
from \(\Laur\) by inverting all powers of \(f\).  
We denote by \(\Laur_f\) the localization of 
 \(\Laur\) at \(U_f\). 
 The assignment  \(U_f \mapsto \Laur_f\) 
 defines a sheaf on \(\C^*\) with the Zariski topology. 
 The stalks of this sheaf are denoted 
 \(\Laur_z\) and are as discussed above. 
 \end{definition}

Note that inverting \(f\) automatically inverts all divisors of \(f\) and 
hence inverts all polynomials which do not 
vanish on \(U_f\), since the roots of such a polynomial are all 
roots of \(f\), which implies it is a divisor of some positive power of \(f\). 

Hence \(\Laur_f\) is simply the ring of 
\emph{regular rational functions on \(U_f\)}.

Modules over a ring \(R\) can also be localized at multiplicative 
subsets of \(R\), by setting 
\[ M_S \defeq M\otimes_R R_S\]
where \(R_S\) is the localization of \(R\) at \(S\). In the 
case of interest, where \(R = \Laur\), we denote by 
\(M_f\) the localization of a \(\Laur\)-module at 
\(S\defeq \{1, f, f^2, \ldots\}\) (that is, at \(U_f\).) The 
important point is that \emph{localization of a module
at \(U_f\) kills torsion supported 
in \(Z_f\)}. If \(M\) is a torsion module with 
finite support, then \(M_f = 0\) if \(f\) vanishes 
on the support. More generally, of course, 
if the support of the torsion submodule of a finitely generated 
module \(M\) is
\(Z_f\) then localizing \(M\) at \(U_f\) kills the torsion part, and the 
localization of the free part is free (over \(\Laur_f\).) (Recall that since 
\(\Laur\) is a principal ideal domain, every finitely generated 
\(\Laur\)-module splits uniquely into a torsion and a free module.) 

We now consider the case where the module \(M\) has the form 
\( M = \K^*_\TT (X)\) where \(X\) is a \(\TT\)-space. More generally, 
we may consider any \(\KK^\TT\)-group, \emph{i.e.} any 
\(\KK_*^\TT(A,B)\), for \(A\) and \(B\) \(\TT\)-C*-algebras, with 
its \(\Laur\)-module structure. If \(f\in \Laur\) we may localize 
any such module at \(f\), yielding \(\KK^\TT_*(A,B)_f\). Localization 
is obviously compatible with the \(\Z/2\)-gradings, the intersection 
product (composition in \(\KK^\TT\)) and the external product. 
In particular we may speak of \(\KK^\TT_f\)-equivalence and so on. 

\begin{remark}
Localization in \(\K\)-theory is slightly different from 
localization in equivariant cohomology as in \cite{Atiyah-Bott:Moment}. 
\begin{enumerate}
\item The coefficient ring \(\Laur = \KK^\TT(\C, \C)= \KK^\TT_*(\C, \C)\) 
we use is trivially graded, while the cohomological analogue  
\(\textup{H}^*_\TT (\pnt) \defeq \textup{H}^*(B\TT) \cong \C[u]\) 
\(\C[u]\) is \(\Z\)-graded with \(\textup{deg}(u) = 2\). 
Atiyah's Completion 
Theorem relates the two rings: equivariant cohomology is the 
\(I\)-adic completion of \(\Laur\) with respect to the ideal 
\(I \defeq \langle X-1\rangle\) corresponding to \(1 \in \C^*\). Supports of 
\(\C[u]\)-modules, like for example \(H^*_\TT(X) \defeq H^*(E\TT\times_\TT X)\) for a \(\TT\)-space \(X\), are contained in \(\C\) instead of \(\C^*\). If the 
modules are graded, then 
their supports are always either all of \(\C\) or are 
\(\{0\}\), because they must be a cone (see \cite{Atiyah-Bott:Moment}).
 Therefore the cohomological 
analogue of \(\TTspec\) is rather trivial: the support of the 
torsion submodule of \(H^*_\TT(X)\) must be \(\{0\}\) and 
after localizing at \(\C^*\defeq  \C- \{0\} \) we get a free module.
\item  After localizing \(\textup{H}^*_\TT (X)\) by localizing, separately, 
its even and odd parts, the integer gradation on the module becomes 
lost; the \(\Z/2\)-grading is not lost, however. 
\end{enumerate}
Both of these facts would 
seem to support the idea that \(\K\)-theory responds somewhat 
better to localization. 
\end{remark}

%The Localization Theorem of
%Atiyah and Segal (see Theorem 1.1 of \cite{Atiyah-Segal:Index}).  

We are now going to refine some of our results from 
the first section about equivariant \(\K\)-theory of 
spaces, using localization. See also \cite{Atiyah-Segal:Index}, 
for some overlapping results.

We begin by discussing the issue of finite generation, 
which, importantly, implies that 
the torsion submodule of \(\K^*_\TT (X)\) has 
finite spectrum.
Graeme Segal has proved the following.

\begin{lemma} (\cite{Segal}, Proposition 5.4). 
\label{lem:Segals_lemma}
If \(X\) is a smooth compact \(\TT\)-manifold, then 
\(\K^*_\TT (X)\) is a finitely generated \(\Laur\)-module. 
\end{lemma}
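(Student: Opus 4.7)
The plan is to combine Palais's equivariant slice theorem (Remark~\ref{rem:slices}) with a Mayer--Vietoris induction, exploiting that $\Laur$ is Noetherian (being a PID) so that finite generation is preserved by extensions in short exact sequences. First, by compactness of $X$ and the smooth slice theorem, I would cover $X$ by finitely many open $\TT$-invariant slices $U_1,\ldots,U_n$, each of the form $U_i\cong \TT\times_{H_i}V_i$, where $H_i\subset\TT$ is a closed subgroup (either $\TT$ itself or a finite cyclic $\Omega_{n_i}$) and $V_i$ is an $H_i$-equivariantly contractible open neighborhood of the origin in the linear $H_i$-representation on the normal slice. Smoothness of the action is precisely what provides such linear local models.

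For the base case, suppose $X=\TT\times_H V$ is a single slice. The induction isomorphism identifies
$$\K^*_\TT(\TT\times_H V)\cong \K^*_H(V),$$
with $\Laur\cong \Rep(\TT)$ acting on the right via the restriction map $\Rep(\TT)\to \Rep(H)$. Since $V$ is $H$-contractible, $\K^*_H(V)\cong \Rep(H)$. When $H=\TT$ this is $\Laur$ itself; when $H\cong \Omega_n$ is a finite cyclic subgroup, the restriction presents $\Rep(H)$ as the cyclic $\Laur$-module $\Laur/(X^n-1)$. In either case $\Rep(H)$ is finitely generated over $\Laur$, settling the base case.

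For the inductive step with $n\geq 2$ slices, set $U\defeq U_1\cup\cdots\cup U_{n-1}$ and consider the equivariant Mayer--Vietoris sequence
$$ \cdots \to \K^{*+1}_\TT(U\cap U_n) \to \K^*_\TT(X) \to \K^*_\TT(U)\oplus \K^*_\TT(U_n) \to \K^*_\TT(U\cap U_n) \to \cdots $$
associated to the open cover $X=U\cup U_n$. By Remark~\ref{rem:slices}(ii), each intersection $U_i\cap U_n$ is itself a slice, so $U\cap U_n$ is covered by the $n-1$ slices $U_1\cap U_n,\ldots,U_{n-1}\cap U_n$. The inductive hypothesis applied to $U$ and $U\cap U_n$, together with the base case applied to $U_n$, yields finite generation of the two neighbours of $\K^*_\TT(X)$ in the exact sequence; Noetherianness of $\Laur$ then forces $\K^*_\TT(X)$ to be finitely generated as well.

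The main obstacle I expect is in the base case: while the underlying isomorphism $\K^*_\TT(\TT\times_H V)\cong \K^*_H(V)$ is a standard instance of Green's imprimitivity theorem (or may be seen directly via the correspondence between $\TT$-equivariant and $H$-equivariant vector bundles), one must verify carefully that the $\Rep(\TT)$-module structure transports under this isomorphism to the restriction of the $\Rep(H)$-module structure, so that the conclusion is genuinely about finite generation over $\Laur$ rather than merely as abelian groups; the Baaj--Skandalis perspective of Remark~\ref{rem:bs_duality} offers one natural way to see this. Compactness of $X$ is indispensable throughout, as Example~\ref{ex:points} already produces an infinite disjoint union of slices whose equivariant $\K$-theory fails to be finitely generated over $\Laur$.
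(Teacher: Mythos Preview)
The paper does not give its own proof of this lemma; it simply records Segal's result with a citation. Your proposal is therefore an independent argument, and it has a genuine gap in the inductive step.

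Your base case treats slices of the special form $\TT\times_H V$ with $V$ an $H$-equivariantly contractible open set in a linear $H$-representation, and there the identification $\K^*_\TT(\TT\times_H V)\cong\Rep(H)$ is correct and gives finite generation over $\Laur$. The problem is that the Mayer--Vietoris step needs finite generation of $\K^*_\TT(U\cap U_n)$, and the sets $U_i\cap U_n$ covering it are slices only in the weak sense of Remark~\ref{rem:slices}: inside $U_n\cong\TT\times_{H_n}V_n$ the intersection $U_i\cap U_n$ has the form $\TT\times_{H_n}W_i$ for some $H_n$-invariant open $W_i\subset V_n$, and there is no reason for $W_i$ to be $H_n$-contractible or even to have the homotopy type of a finite complex. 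So whichever way you read the inductive hypothesis --- ``covered by $k$ of the original $U_i$'', ``covered by $k$ linear slices'', or ``covered by $k$ arbitrary slices'' --- it either does not apply to $U\cap U_n$ or it rests on a base case you have not established. The induction does not close. (Contrast this with the paper's proof of Lemma~\ref{lem:lemmas}(i), which runs a superficially similar slice induction: that argument only needs to bound \emph{supports}, and Lemma~\ref{lem:spec_for_slices} gives $\TTspec(\TT\times_H W)\subset H$ for \emph{any} $H$-space $W$, contractible or not. Finite generation is a much more delicate conclusion.)

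The standard remedy is to replace the slice cover by a finite $\TT$-CW structure, available for smooth compact $G$-manifolds by Illman's equivariant triangulation theorem, and induct over the skeleta: attaching a cell $\TT/H\times D^k$ contributes $\Rep(H)$, and the long exact sequence of the pair together with Noetherianness of $\Laur$ (exactly Lemma~\ref{lem:fg_exact}) propagates finite generation. This is the shape of Segal's original argument.
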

The following is a useful geometric counterpart of Segal's 
lemma.

\begin{lemma}
\label{lem:products}
 For a compact manifold \(X\) with smooth \(\TT\)-action, 
 there are only finitely many points \(t\in \TT\) which 
 fix some point of \(X- F\), where \(F\subset X\) is the 
 stationary set. 
 
Hence if \(f\in \Laur\) is a polynomial which vanishes 
 on these points, then 
 \(\K^*_\TT \bigl( W\times (X- F)\bigr)_f = 0\) for 
 any locally compact \(\TT\)-space \(W\). 
\end{lemma}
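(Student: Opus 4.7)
The strategy is to combine the equivariant tubular neighborhood theorem with an application of Lemma~\ref{lem:lemmas}(i) to the product space $W\times(X-F)$. For the first statement, since $X$ is a smooth compact $\TT$-manifold, the fixed-point set $F$ is a closed $\TT$-invariant submanifold, and by the equivariant tubular neighborhood theorem it admits a $\TT$-invariant open neighborhood $N$ diffeomorphic to the total space of its normal bundle $\nu\to F$. The circle acts linearly on the fibres of $\nu$ with no nonzero invariant vector, and since $F$ has finitely many connected components on each of which the weights of the fibrewise $\TT$-action are locally constant, only finitely many nonzero integer weights $n_1,\ldots,n_s$ occur; every stabilizer of a nonzero vector in $\nu$ is then a subgroup of some $\mu_{|n_j|}$. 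Choosing $N'\subset N$ with $\overline{N'}\subset N$, the compact set $X-\overline{N'}$ is contained in $X-F$, and Palais's theorem together with compactness yields a finite cover by open slices with finite stabilizers $H_1,\ldots,H_r$. Any $t\in\TT$ fixing a point of $X-F$ therefore lies in one of the finitely many finite subgroups $\mu_{|n_1|},\ldots,\mu_{|n_s|},H_1,\ldots,H_r$, giving the first statement.

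For the second statement, the key observation is that $W\times(X-F)$ is a locally compact $\TT$-space with \emph{no stationary points}: any fixed point would project to a fixed point of $X-F$, and there are none. Lemma~\ref{lem:lemmas}(i) therefore applies with $W\times(X-F)$ as the ambient space, and for every precompact $\TT$-invariant open $U'\subset W\times(X-F)$ it gives that $\TTspec(U')$ is finite and
\[
 \TTspec(U')\subset\bigcup_{(w,x)\in\overline{U'}}\TT_{(w,x)}\subset\bigcup_{(w,x)\in\overline{U'}}\TT_x\subset E,
\]
since $\TT_{(w,x)}=\TT_w\cap\TT_x\subset\TT_x\subset E$ for every $(w,x)\in\overline{U'}\subset W\times(X-F)$, where $E$ denotes the finite set of the first paragraph. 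The annihilator of $\K^*_\TT(U')$ is then a nonzero ideal generated by a polynomial whose roots all lie in $E$, and hence divides some power of $f$; so $f$ annihilates $\K^*_\TT(U')$ after raising to a sufficient power, equivalently $\K^*_\TT(U')_f=0$.

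Writing $\K^*_\TT\bigl(W\times(X-F)\bigr)=\varinjlim_{U'}\K^*_\TT(U')$ as $U'$ ranges over precompact $\TT$-invariant open subsets of $W\times(X-F)$, and using that localization commutes with direct limits, one concludes $\K^*_\TT\bigl(W\times(X-F)\bigr)_f=\varinjlim_{U'}\K^*_\TT(U')_f=0$. The main obstacle I foresee is the geometric input in the first paragraph --- isolating the finite list of weights of the normal bundle and the finite Palais slice cover of $X-\overline{N'}$ --- which relies on the equivariant tubular neighborhood theorem and on Palais's theorem combined with compactness of $X$; once these are in place, Lemma~\ref{lem:lemmas}(i) applied to the fixed-point-free space $W\times(X-F)$ handles the rest directly.
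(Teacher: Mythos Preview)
Your argument is correct and follows the paper's strategy: tubular neighbourhood of \(F\) plus a finite Palais slice cover of the compact complement for the first claim, and the precompact-subset reduction of Lemma~\ref{lem:lemmas}(i) together with a direct-limit argument for the second. One slip to fix: \(X - \overline{N'}\) is open, not compact --- you want the closed set \(X - N'\), which is compact in \(X\) and contained in \(X-F\) once \(F \subset N'\); your weight analysis of \(\nu\) is in fact more careful than the paper's own phrasing (which asserts freeness on \(\nu - 0\)).
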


\begin{proof}
For the first statement, since \(F\) is a smooth 
submanifold of \(X\) it has a normal bundle \(\nu\), 
which is a \(\TT\)-equivariant real vector bundle. 
This may be identified with the orthgonal 
complement of \(\Tvert F\) in \(\Tvert X|_F\) with 
respect to any \(\TT\)-invariant Riemannian 
metric. Since the fixed-point set of \(t\in \TT\) 
in \(\Tvert_x X\) (for \(x\in F\) ) is exactly 
\(\Tvert F\), \(t\) fixes no nonzero vector in 
\(\nu\). 

Let \(U\) be the 
 corresponding \(\TT\)-invariant open neighbourhood of 
 \(F\). Since \(\TT\) acts freely on \(\nu -  0\) it 
 acts freely on \(U- F\). We can cover 
 the compact \(X- U\) by finitely many 
 open slices \(W_i\subset X- U\), centred, say 
 at points \(x_i\), and if \(x\in X- U\) is 
 any point, then \(\TT_x\subset \TT_{x_i}\) follows for 
 \(x\in W_i\). Since \(\TT_x = \{1\}\) for 
 \(x\in U\), \(\bigcup_{x\in X- F} \TT_x\)
 is a finite set as claimed. 
 
 If \( (w, x) \in W\times X- F\) then of 
 course \(\TT_{(w,x)}\subset \TT_x\). It follows that 
 if \(f\in \Laur\) vanishes on \(\bigcup_{x\in X-F}\TT_x\)
  then it annihilates 
 the image of \(\K^*_\TT (Y) \to \K^*_\TT \bigl(W\times (X- F)\bigr)\) 
 for any pre-compact \(Y\subset W\times (X- F)\),
  \emph{c.f.} the arguments  in 
 the first paragraph of the proof of Lemma \ref{lem:lemmas}. 
 Hence it annihilates \(\K^*_\TT \bigl(W\times (X- F)\bigr)\). 
\end{proof}

\begin{example}
\label{ex:many_points}
Consider \(\TT\times \N\) with the \(\TT\)-action of 
Example \ref{ex:points}. Let \(X\) be the one-point 
compactification of \(X\times \N\), with \(\TT\)-action 
the canonical extension of the action on \(X\times \N\) 
(fixing the point at infinity). Then \(X\) is a compact 
space but there are infinitely 
many distinct points \(t\in \TT\) which fix some 
point of \(X-F\). The equivariant \(\K\)-theory is 
zero in dimension \(1\) and in dimension \(0\) is 
the \(\Laur\)-module 
\[ \K^0_\TT (X) \cong \Laur \oplus \bigoplus_{n\in \N} \, \Laur/ (f_n)\]
where \(f_n (X) = \prod_{\omega \in \Omega_n} X-\omega\).
The torsion submodule of \(\K^0_\TT (X)\) is not finitely generated 
and has support \(\C^*\). Thus both Lemma \ref{lem:Segals_lemma}
and Lemma \ref{lem:products} fail for \(X\) due to a lack of a 
`collaring' for the stationary set. 
\end{example}

\begin{lemma}
\label{lem:fg_exact}
Let \(A,B\) and \(C\) be \(\Laur\)-modules, 
\(\alpha \colon A \to B\) and \(\beta\colon B \to C\) 
module maps, such that the sequence
\[ 0 \longrightarrow \textup{im}(\alpha) 
  \longrightarrow B \longrightarrow \ker(\beta) \longrightarrow 0\]
is exact and \(A\) and \(C\) are finitely generated. Then 
\(B\) is finitely generated. 

\end{lemma}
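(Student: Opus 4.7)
The plan is to exploit that $\Laur$ is a principal ideal domain, hence Noetherian, which gives two facts I will use repeatedly: every submodule of a finitely generated $\Laur$-module is finitely generated, and in a short exact sequence $0 \to M' \to M \to M'' \to 0$ with $M'$ and $M''$ finitely generated, the middle term $M$ is finitely generated (by lifting generators). My strategy is to verify that both outer terms of the displayed short exact sequence are finitely generated, then invoke the extension property.

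The left-hand term $\textup{im}(\alpha)$ is immediately finitely generated: finite generators of $A$ map under $\alpha$ to a finite generating set of $\textup{im}(\alpha)$.

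For the right-hand term, I use the map $\beta$. The first isomorphism theorem gives an injection $B/\ker(\beta) \hookrightarrow C$ with image $\textup{im}(\beta)$; since $C$ is finitely generated and $\Laur$ is Noetherian, $\textup{im}(\beta)$ is finitely generated. Reading the displayed sequence as arising from exactness at $B$ of the complex $A \xrightarrow{\alpha} B \xrightarrow{\beta} C$, i.e. using $\textup{im}(\alpha) = \ker(\beta)$ as submodules of $B$, we have $B/\textup{im}(\alpha) = B/\ker(\beta) \cong \textup{im}(\beta)$. Hence $\ker(\beta)$, identified as the quotient $B/\textup{im}(\alpha)$ via the given exact sequence, is finitely generated.

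With both outer terms finitely generated, $B$ is finitely generated by the standard lifting argument: take a finite generating set $\{x_i\}$ for $\textup{im}(\alpha) \subset B$, lift a finite generating set of $B/\textup{im}(\alpha)$ to elements $\{y_j\} \subset B$, and observe that for any $b \in B$ the image $\bar b$ in $B/\textup{im}(\alpha)$ is a $\Laur$-combination of the $\bar y_j$, so $b - \sum_j r_j y_j$ lies in $\textup{im}(\alpha)$ and is a combination of the $x_i$. The only real subtlety is identifying the right-hand term of the displayed sequence with a submodule of $C$; this reduces to Noetherianness of $\Laur$ applied to $\textup{im}(\beta) \subset C$, and I do not anticipate any further obstacle.
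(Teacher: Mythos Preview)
Your proof is correct and follows essentially the same approach as the paper's: both reduce to showing the two outer terms of the short exact sequence are finitely generated, using that \(\textup{im}(\alpha)\) is a quotient of the finitely generated \(A\), and that the right-hand term is (isomorphic to) \(\textup{im}(\beta)\subset C\), hence finitely generated because \(\Laur\) is Noetherian. You spell out the identification \(B/\textup{im}(\alpha)\cong \textup{im}(\beta)\) and the lifting argument explicitly, whereas the paper leaves these implicit; otherwise the arguments coincide.
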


\begin{proof}
This reduces immediately to whether \(\ker(\beta)\) and 
\(\textup{im}(\alpha)\) are finitely generated; the latter is 
obvious and the former follows from the fact that any 
submodule of a finitely generated \(\Laur\)-module is finitely 
generated, because \(\Laur\) is Noetherian. 
\end{proof}

\begin{corollary}
\(\K^*_\TT(X-F)\) is finitely generated for any smooth and 
compact 
\(\TT\)-manifold \(X\). 
\end{corollary}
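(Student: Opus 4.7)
The plan is to extract the finite generation of $\K^*_\TT(X-F)$ from the $6$-term exact sequence of the ideal $C_0(X-F) \subset C(X)$, sandwiching it between two modules already known to be finitely generated.

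First I would observe that for a smooth action of the compact Lie group $\TT$ on $X$, the stationary set $F$ is a closed smooth $\TT$-invariant submanifold of $X$; in particular, $F$ is itself a smooth compact $\TT$-manifold (with trivial $\TT$-action, so that $\K^*_\TT(F) \cong \K^*(F) \otimes_\C \Laur$). Hence Lemma \ref{lem:Segals_lemma} applies both to $X$ and to $F$, giving that $\K^*_\TT(X)$ and $\K^*_\TT(F)$ are finitely generated $\Laur$-modules.

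Next, from the $\TT$-equivariant exact sequence
\[ 0 \to C_0(X- F) \to C(X) \to C(F) \to 0 \]
I extract the $6$-term exact sequence of $\K^*_\TT$-groups; for each $* \in \{0,1\}$ it contains the piece
\[ \K^{*-1}_\TT(F) \xrightarrow{\alpha} \K^*_\TT(X- F) \xrightarrow{\beta} \K^*_\TT(X), \]
which is exact at the middle term. Setting $A = \K^{*-1}_\TT(F)$, $B = \K^*_\TT(X- F)$ and $C = \K^*_\TT(X)$, both $A$ and $C$ are finitely generated by the previous step. Lemma \ref{lem:fg_exact} then shows that $B = \K^*_\TT(X- F)$ is finitely generated for each $* \in \{0,1\}$, which is the claim.

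There is no real obstacle to this argument: all the ingredients (smoothness of the fixed-point submanifold, Segal's finite generation result, Noetherianity of $\Laur$ encoded in Lemma \ref{lem:fg_exact}) are already in place. The only point that deserves a moment's care is invoking the fact that $F$ is a smooth compact $\TT$-submanifold, which is what allows us to apply Segal's Lemma to it and thereby obtain finite generation of $\K^*_\TT(F)$; once this is noted, the conclusion is immediate from the exact sequence.
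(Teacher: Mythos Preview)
Your proof is correct and follows essentially the same route as the paper: apply Segal's Lemma to both \(X\) and the smooth compact submanifold \(F\), then use the \(6\)-term exact sequence together with Lemma~\ref{lem:fg_exact} to conclude finite generation of \(\K^*_\TT(X-F)\). The paper likewise flags the smoothness of \(F\) (via Remark~\ref{rem:stationary_smooth}) as the point enabling the application of Segal's result.
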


\begin{proof}
By Lemma \ref{lem:Segals_lemma}
 \(\K^*_\TT (X)\) and \(\K^*_\TT (F) \) are
finitely generated \(\Laur\)-modules (see Remark \ref{rem:stationary_smooth}.) 
The result then follows from Lemma \ref{lem:fg_exact}, for \(\K^*_\TT (X-F)\) 
fits into a \(6\)-term exact sequence with the other 
terms \(\K^*_\TT (F)\) or \(\K^*_\TT (X)\). 
finitely generated. 
\end{proof}

\begin{remark}
\label{rem:stationary_smooth}
The stationary set of a smooth \(\TT\)-action is smooth: 
a choice of a \(\TT\)-invariant Riemannian 
metric yields, at every \(x\in F\), an 
exponential map, \(\Tvert_x X \to X\), which is \(\TT\)-equivariant 
and is a diffeomorphism in a small metric ball around the 
origin in \(\Tvert_x X\). Therefore \(\exp_x\) intertwines 
(an open subset of) the stationary set of the \emph{linear} action 
of \(\TT\) on \(\Tvert_x X\), to (an open subset) of the 
stationary set \(F\). This yields a 
\(\TT\)-equivariant smooth manifold chart around \(x\) in \(F\).
\end{remark}

It follows that \(\K^*_\TT (X-F)\) has a finite \(\TT\)-spectrum, equivalently, 
a nonzero annihilator ideal, because it is finitely generated and torsion.

 We will discuss \(\TT\)-equivariant Poincar\'e duality 
 for smooth manifolds 
in greater depth later; for now, the following statement 
is useful for proving certain things quickly.

\begin{theorem} 
\label{thm:duality}
Let \(X\) be a smooth and compact \(\TT\)-manifold and 
\(D\in \KK^\TT(C(\Tvert X), \C)\) the class of the 
Dirac operator on the almost-complex \(\TT\)-manifold 
\(\Tvert X\). Then cup-cap product with 
\(D\) determines a natural family of isomorphisms  
\[ \KK^\TT_*( C(X)\otimes A, B) \cong \KK^\TT_*(A, C_0(\Tvert X)\otimes B)\]
for all \(\TT\)-C*-algebras \(A,B\). 
\end{theorem}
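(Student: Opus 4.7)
The approach is a standard Kasparov-style Poincar\'e duality argument: produce a dual class $\Theta \in \KK^\TT(\C, C(X)\otimes C_0(\Tvert X))$ and verify that $\Theta$ and $D$ are mutually inverse under the two possible contractions, i.e.\ check the zig-zag identities
\[ \Theta \otimes_{C_0(\Tvert X)} D = 1_{C(X)}, \qquad \Theta \otimes_{C(X)} D = 1_{C_0(\Tvert X)}, \]
where the first product uses the $C(X)$-algebra structure on $C_0(\Tvert X)$ coming from the bundle projection $\Tvert X \to X$, and the units are the identity classes in $\KK^\TT(C(X),C(X))$ and $\KK^\TT(C_0(\Tvert X),C_0(\Tvert X))$ respectively. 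Once $\Theta$ is in hand and the zig-zag relations are verified, the isomorphism of the theorem, with general coefficients $A$ and $B$, is obtained by tensoring $\Theta$ and $D$ externally with identity classes on $A$ and $B$: the two maps so defined become mutually inverse by a purely formal calculation using the associativity of Kasparov products.

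To construct $\Theta$, I would choose a $\TT$-equivariant smooth embedding $j\colon X\hookrightarrow V$ of $X$ into a finite-dimensional unitary $\TT$-representation $V$, which exists since $X$ is compact and $\TT$ is a compact Lie group. Let $\nu\to X$ be the equivariant normal bundle, so that $\nu\oplus \Tvert X\cong X\times V$ as $\TT$-vector bundles. Combining the equivariant Bott element of $V$, the Pontryagin-Thom collapse $C_0(V)\to C_0(\nu)$ associated to a tubular neighborhood of $j(X)\subset V$, and the equivariant Thom isomorphism for the vector bundle $\Tvert X \to X$ (which uses the almost-complex structure on $\Tvert X$) produces the required class $\Theta\in\KK^\TT(\C, C(X)\otimes C_0(\Tvert X))$.

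The hard part is the verification of the two zig-zag identities. The relation $\Theta\otimes_{C(X)} D = 1_{C_0(\Tvert X)}$ reduces essentially fibrewise, over $X$, to equivariant Bott periodicity: on each tangent fibre the product is the pairing of a Bott class with the Dirac class on a complex $\TT$-representation, which equals the unit by the equivariant Bott theorem. The relation $\Theta\otimes_{C_0(\Tvert X)} D = 1_{C(X)}$ amounts to a family index computation and can be reduced, via the tubular neighborhood of $j(X)\subset V$ together with Kasparov's homotopy invariance and excision, to the case $X=\{\textup{point}\}$, where it follows from the classical Bott-Dirac pairing. The main obstacle in both verifications is the equivariant bookkeeping, and in particular tracking the $C(X)$-linear structure of $D$: the Dolbeault operator on $\Tvert X$ is not literally $C(X)$-linear, but only so modulo compact commutators, so one must either argue via a homotopy or work with an $\RKK^\TT$-refinement of $D$ taking values in $\RKK^\TT(X; C_0(\Tvert X), \C)$, where the $C(X)$-linearity is built in from the outset.
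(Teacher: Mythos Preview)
The paper does not actually prove this theorem: immediately after the statement it simply attributes the result to Connes--Skandalis \cite{Connes-Skandalis:Longitudinal} in the non-equivariant case and to Kasparov \cite{Kas} (and Emerson--Meyer \cite{Emerson-Meyer:Dualities}) equivariantly, and moves on. Later, in \S\ref{sec:lef}, the paper revisits the duality in zig-zag form (with classes \(\Delta\) and \(\underline{\Dudelta}\) built from \(D\) and the embedding class \(\Theta\)), but again just asserts that the zig-zag equations \eqref{eq:zig_zag} are ``easily checked'' without carrying out the verification.

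Your proposal is a correct outline of the standard proof as it appears in the cited sources: construct the dual class via an equivariant embedding into a representation, the Thom class, and a Pontryagin--Thom collapse, then reduce the zig-zag identities to equivariant Bott periodicity. Your remark about the \(C(X)\)-linearity of the Dolbeault operator and the need to pass to an \(\RKK^\TT\)-refinement is exactly the technical wrinkle addressed in \cite{Emerson-Meyer:Dualities}. So there is nothing to compare: you have sketched the argument that the paper cites rather than reproduces.
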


Theorem \ref{thm:duality}
is due to \cite{Connes-Skandalis:Longitudinal} in the 
non-equivariant setting. See also Kasparov \cite{Kas} in the equivariant setting and his references. For a modern 
treatment of equivariant Poincar\'e duality see 
\cite{Emerson-Meyer:Dualities}.

It follows from Poincar\'e duality
that if \(W\) and \(Z\) are compact smooth
 \(\TT\)-manifolds, then 
\(\KK^\TT_*(C(W), C(Z))\) is a finitely generated 
\(\Laur\)-module. Indeed, duality reduces us to 
proving that \(\K^*_\TT (\Tvert W\times Z)\) is 
finitely generated, which follows from Lemma \ref{lem:vector_bundles} below. 
From this,
 and consideration of the \(6\)-term exact sequence 
 associated to \(F\subset X\), we deduce that
 the modules 
 \emph{e.g.} \(\KK^\TT_*(C_0(X-F), C(F))\) of morphisms in \(\KK^\TT\) between 
 any two of \(C(X), C(F)\) and \(C_0(X-F)\), are finitely generated. 
 
%Note that \(\KK^\TT_*(C(W), \C)\) is not finitely generated 
% if \(W\) is the Cantor set, with trivial \(\TT\)-action. 

\begin{lemma} 
\label{lem:vector_bundles}
If \(X\) is a compact smooth \(\TT\)-manifold and 
\(V\to X\) is a real \(\TT\)-equivariant vector bundle on \(X\), then 
\(\K^*_\TT (V)\) is finitely generated. Moreover, if \(\TT\)
acts freely on \(V-0\) then 
the restriction map 
\[ \K^*_\TT (V) \to \K^*_\TT (X)\]
induces an isomorphism after localizing at 
\(\C^*-\{1\}\).

\end{lemma}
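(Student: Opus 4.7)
\emph{Proof plan.} The plan is to exploit the disc-sphere decomposition of $V$ associated to a $\TT$-invariant Riemannian metric on $V$ (obtained by averaging over $\TT$). Let $D(V)$ and $S(V)$ denote the resulting closed unit disc bundle and sphere bundle: $S(V)$ is then a smooth compact $\TT$-manifold, $D(V)$ is a compact $\TT$-manifold with boundary that $\TT$-equivariantly retracts onto $X$ via fibrewise scaling, and $D(V) - S(V)$ is $\TT$-homeomorphic to $V$ (radially). For finite generation of $\K^*_\TT(V)$, I would use the short exact sequence
$$0 \to C_0(V) \to C(D(V)) \to C(S(V)) \to 0$$
and the associated six-term exact sequence in $\K^*_\TT$. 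Lemma~\ref{lem:Segals_lemma} gives that $\K^*_\TT(S(V))$ and $\K^*_\TT(D(V)) \cong \K^*_\TT(X)$ are finitely generated; breaking the hexagon into short exact sequences and applying Lemma~\ref{lem:fg_exact} (twice) then yields finite generation of $\K^*_\TT(V)$.

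For the localization statement, I would view the zero section $X \hookrightarrow V$ as a closed $\TT$-invariant subspace, so that the six-term exact sequence coming from
$$0 \to C_0(V - 0) \to C_0(V) \to C(X) \to 0$$
reduces the claim to showing that $\K^*_\TT(V - 0)$ becomes zero after inverting $X - 1$ (the localization corresponding to the Zariski open set $\C^* - \{1\}$). To establish this, I would use the $\TT$-equivariant deformation retraction $v \mapsto v/|v|$ of $V - 0$ onto the compact sphere bundle $S(V)$ to obtain $\K^*_\TT(V - 0) \cong \K^*_\TT(S(V))$, which is finitely generated by Lemma~\ref{lem:Segals_lemma}. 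Since $\TT$ acts freely on $V - 0$, and hence on $S(V)$, every isotropy group is trivial, so Lemma~\ref{lem:lemmas}(i) gives $\TTspec(S(V)) \subset \bigcup_{y} \TT_y = \{1\}$. A finitely generated $\Laur$-module whose support is contained in $\{1\}$ has annihilator ideal a power of $X - 1$, hence is killed by inverting $X - 1$; this completes the reduction.

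The main obstacle I anticipate is the non-compactness of $V - 0$, which prevents direct application of Segal's lemma and of Theorem~\ref{thm:commutative_case}. Both difficulties are dissolved simultaneously by the $\TT$-equivariant homotopy equivalence with the compact sphere bundle $S(V)$; once that step is in place, the rest of the argument is just exactness of localization combined with the six-term exact sequence and the elementary support computation above.
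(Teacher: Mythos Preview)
Your proposal is correct and matches the paper's argument almost exactly: the paper uses the same disc--sphere decomposition and Lemma~\ref{lem:fg_exact} for finite generation, and for the localization statement likewise observes that free action on $V-0$ forces $\TTspec(S_V)\subset\{1\}$, so $\K^*_\TT(S_V)$ dies after inverting $X-1$. The only cosmetic difference is that the paper reads the restriction map $\K^*_\TT(V)\to\K^*_\TT(X)$ directly off the disc--sphere six-term sequence (via $\K^*_\TT(D_V)\to\K^*_\TT(\overline{D}_V)\cong\K^*_\TT(X)$), whereas you switch to the zero-section sequence $0\to C_0(V-0)\to C_0(V)\to C(X)\to 0$; these are equivalent since $V-0$ retracts equivariantly to $S_V$.
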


In particular, \(\TTspec (V) = \TTspec (X)\cup\{1\}\) if \(\TT\) acts 
freely on \(V-0\). A good example is \(V = \mathrm{T}X\) for a 
compact smooth \(\TT\)-manifold \(X\) where stationary points of the 
action are isolated, that is, where there is only a finite number of 
them. Fixing a \(\TT\)-invariant Riemannian metric, any nonzero 
tangent vector which is fixed by the \(\TT\)-action results in a  
geodesic which is point wise fixed by the action, contradicting that 
the stationary set consists of finitely many points. Thus the 
\(\TT\)-action on nonzero tangent vectors is free.

\begin{proof}
Fix a \(\TT\)-invariant metric on \(V\) and consider 
the exact sequence 
\[ 0 \longrightarrow C_0(D_V)\longrightarrow 
C_0(\overline{D}_V) \longrightarrow C(S_V)\longrightarrow 0\]
where \(D_V\) is the open disk bundle, 
\(\overline{D}_V\) the closed disk bundle, and 
\(S_V\) the sphere bundle. Since 
\(\overline{D}_V\) is \(\TT\)-equivariantly proper 
homotopy equivalent to \(X\), which is a compact 
smooth manifold, and since \(S_V\) is also a 
compact smooth manifold, it follows from considering 
the associated \(6\)-term exact sequence and Lemma 
\ref{lem:fg_exact} that 
\(\K^*_\TT (V) \cong \K^*_\TT (D_V)\) is finitely 
generated. 

If \(\TT\) acts freely on \(V-0\) then it acts freely on 
\(S_V\) and hence \(\TTspec (S_V) \subset \{1\}\). 
Therefore, localizing at \(\C^*-\{1\}\) kills 
\(\K^*_\TT(S_V)\) and the claim follows.

\end{proof}

\begin{remark}
Suppose that \(V\) carries a \(\TT\)-equivariant 
\(\K\)-orientation. The \emph{Euler class} 
\(e_V\in \K^{-\dim (V)}(X)\) of 
\(V\) can be defined as the restriction to \(X\) (the 
zero section in \(V\)) of the Thom class for \(V\), in 
\(\K^{-\dim (V)}_\TT (V)\). The Thom class generates 
\(\K^*_\TT (V)\) as a 
free rank-one \(\K^*_\TT (X)\)-module. It follows 
that the restriction map 
\( \K^*_\TT (V) \to \K^*_\TT (X)\) identifies, under 
\(\K^*_\TT (V)\cong \K^*_\TT (X)\), with the 
map 
\[ \K^*_\TT (X) \to \K^*_\TT (X), \; \;\;\; \xi \mapsto \xi \cdot e_V.\]
It follows then from Lemma \ref{lem:vector_bundles} 
that \(e_V\) becomes an invertible after we localize
at \(\C^*-\{1\}\), that is, \(e_V\) is an invertible in the 
ring \(\K^*_\TT (X)_f\) where \(f(X) = X-1\).

  This fact is used frequently in 
connection with characteristic class computations in 
the work of Atiyah and Segal and in 
Atiyah and Bott's paper \cite{Atiyah-Bott:Moment}. 

\end{remark}

\begin{theorem}
\label{thm:restriction_to_stationary}
Let \(X\) be a compact smooth 
\(\TT\)-manifold and \(F\subset X\) the stationary set.
Let 
\[\Omega\defeq \{t\in \TT\mid tx = x\; \textup{for some}\; x\in X- F\}.\]
\(\Omega\) is finite. 
Let \(f\in \Laur\) be a polynomial 
vanishing on \(\Omega\). 
 Then \(C_0(X- F)\) is \(\KK^\TT_f\)-equivalent to 
 the zero \(\TT\)-C*-algebra, and 
the localization \(\rho_f \in \KK^\TT(C(X), C(F))_f\) of the 
restriction morphism \(\rho \in \KK^\TT(C(X), C(F))\), is invertible
(in \(\KK^\TT_f\)). 

\end{theorem}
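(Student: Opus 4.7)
Finiteness of $\Omega$ is immediate from Lemma \ref{lem:products}. My plan is to reduce the rest of the theorem to the single vanishing
\[ \KK^\TT_*\bigl( C_0(X-F), C_0(X-F)\bigr)_f = 0.\]
Once this is established, $\mathrm{id}_{C_0(X-F)}$ is killed by a power of $f$, so $C_0(X-F)$ becomes the zero object of the localized triangulated category $\KK^\TT_f$. The exact triangle $C_0(X-F) \to C(X) \xrightarrow{\rho} C(F) \to C_0(X-F)[1]$ coming from the semisplit extension $0 \to C_0(X-F) \to C(X) \to C(F) \to 0$ survives $f$-localization (which is exact on $\Laur$-modules), and this in turn forces $\rho_f$ to be an isomorphism in $\KK^\TT_f$.

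For the vanishing itself, the key is to combine Poincar\'e duality with Lemma \ref{lem:products}. Setting $A = \C$ and $B = C_0(X-F)$ in Theorem \ref{thm:duality}, I get for each of the compact smooth $\TT$-manifolds $M \in \{X, F\}$ a $\Rep(\TT)$-linear isomorphism
\[ \KK^\TT_*\bigl( C(M), C_0(X-F)\bigr) \cong \K^\TT_*\bigl(\Tvert M \times (X-F)\bigr),\]
the right-hand side carrying the diagonal $\TT$-action. Lemma \ref{lem:products} applied with $W = \Tvert M$ (a locally compact $\TT$-space) says precisely that the right-hand side vanishes after localization at $f$, whence both $\KK^\TT_*(C(X), C_0(X-F))_f$ and $\KK^\TT_*(C(F), C_0(X-F))_f$ vanish.

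The desired vanishing $\KK^\TT_*(C_0(X-F), C_0(X-F))_f = 0$ is then read off from the six-term exact sequence obtained by applying the contravariant functor $\KK^\TT_*(-, C_0(X-F))$ to $0 \to C_0(X-F) \to C(X) \to C(F) \to 0$: after $f$-localization, two of the three terms vanish by the previous paragraph, forcing the third. The main step I expect to have to pin down carefully is the $\Rep(\TT)$-linearity of the duality isomorphism of Theorem \ref{thm:duality}, because this is what lets $f$-localization pass through the isomorphism at all; it should follow from the fact that cup-cap product by $D\in \KK^\TT(C_0(\Tvert X), \C)$ is $\Rep(\TT)$-bilinear, since $\Rep(\TT)$ acts via Kasparov external product and the latter is commutative.
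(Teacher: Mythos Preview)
Your argument is correct and matches the paper's proof essentially step for step: both reduce to showing \(\KK^\TT_*(C_0(X-F), C_0(X-F))_f = 0\), obtain this by applying Poincar\'e duality for the compact manifolds \(X\) and \(F\) together with Lemma~\ref{lem:products} to get \(\KK^\TT_*(C(X), C_0(X-F))_f = \KK^\TT_*(C(F), C_0(X-F))_f = 0\), and then feed these into the six-term sequence in the first variable. The only cosmetic difference is in deducing invertibility of \(\rho_f\): you invoke the exact-triangle axiom directly, while the paper writes out the equivalent Yoneda argument by hand.
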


This theorem is similar to Proposition 1.5 of 
\cite{Atiyah-Segal:Index}.

\begin{proof}
To prove that \(C_0(X- F)\) is \(\KK^\TT_f\)-equivalent 
to zero 
it suffices to prove that \(\KK^\TT_*(C_0(X- F), 
C_0(X- F))_f\) is the zero module over \(\Laur_f\). 
By Lemma \ref{lem:products}, if \(f\) vanishes on \(\Omega\) then 
\(\K^*_\TT \bigl(\Tvert F\times (X- F)\bigr)_f = 0 = 
\K^*_\TT \bigl(\Tvert X\times (X- F)\bigr)_f\) and by 
Poincar\'e duality for respectively \(F\) and \(X\) this implies that 
\begin{equation}
\label{eq:vanish}
\KK^\TT_*( C(F), C_0(X- F))_f = 0,\;  \;\;\;\;
\KK^\TT_*( C(X), C_0(X- F))_f=0.\end{equation}
Using the \(6\)-term exact sequence applied to 
the first variable, we deduce that 
\[  \KK^\TT_*( C_0(X- F), C_0(X- F))_f= 0\]
too. Thus, \(C_0(X- F)\) is \(\KK^\TT_f\)-equivalent 
to the zero \(\TT\)-C*-algebra as claimed. 

(We could not use Poincar\'e duality directly for 
\(C_0(X- F)\) because it is non-compact, 
and duality works differently for non-compact
spaces.) 

Now from the \(6\)-term exact sequence, and the 
fact just proved that \(C_0(X- F)\) is 
\(\KK^\TT_f\)-equivalent to zero, 
the map 
\begin{equation}
\label{eq:restriction_isomorphism}
 \KK^\TT_*( A, C(X))_f \xrightarrow{\cdot \otimes_{C(X)} \rho} 
 \KK^\TT_*( A, C(F))_f
 \end{equation}
induced by restriction to \(F\) is an isomorphism for any 
\(\TT\)-C*-algebra \(A\). Now use the 
Yoneda lemma: 
set \(A \defeq C(F)\) and find a pre-image 
\(\alpha\in \KK^\TT(C(F), C(X))_f\) 
of the identity morphism in \( \KK^\TT_*(C(F), C(F))_f\).
Then the composition in \(\KK^\TT_f\) 
\[ C(F) \xrightarrow{\alpha} C(X)\xrightarrow{\rho} C(F)\]
is the identity by the definitions, and the composition 
\[ C(X) \xrightarrow{\rho} C(F) \xrightarrow{\alpha} C(X)\]
is therefore multiplication by an idempotent \(\gamma \defeq \rho\otimes_{C(F)} \alpha \in 
\KK^\TT( C(X), C(X))_f\). To show that \(1-\gamma = 0\) set 
\(A \defeq C(X)\), and observe that this is mapped to zero under 
composition with \(\rho\), \emph{i.e.} under the map \eqref{eq:restriction_isomorphism}. 
Since the latter is an isomorphism after localization, 
 \(1-\gamma = 0\). 

\end{proof}

\begin{remark}
While a properly formulated version of Theorem 
\ref{thm:restriction_to_stationary} \emph{should} be 
true without smoothness assumptions (\emph{c.f.} 
Theorem \ref{thm:commutative_case}, which does not 
use such an assumption), we have not 
pursued it since we are mainly interested in smooth 
manifolds anyway, and because Example 
\ref{ex:many_points} shows that away from smooth 
manifolds, \(\TTspec (X-F)\) may not be finite, which 
makes it more difficult to formulate a theorem. 

\end{remark}

%\begin{proposition}
%\label{prop:homological_restriction_to_stationary} 
%In the above notation, 
%\[ \supp \bigl( \KK^\TT_*(C_0(X-F), C_0(X-F))\bigr) 
%\subset \TTspec (X-F) .\]
%In particular, if \(f\) vanishes on \(\TTspec (X-F)\cup \{1\}\) then 
%\(\KK^\TT_*(C_0(X-F), C_0(X-F))_f = 0\). 
%\end{proposition}

%Otherwise put, if \(f\in \Laur\) annihilates \(\K^*_\TT (X-F)\), and 
%vanishes at \(1\in \C^*\), then 
%it also annihilates \(\KK^\TT_*(C_0(X-F), C_0(X-F))\). 

%\begin{lemma}
%\label{lem:spectra_of_vector_bundles}
%Let \(V\) be a real \(\TT\)-equivariant vector bundle over 
%a locally compact \(\TT\)-space \(B\); assume \(V\) has no nonzero 
%\(\TT\)-fixed-vectors. Then restriction 
% \(C_0(V) \to C_0(B)\) to the zero section of \(V\) induces an 
% isomorphism after localizing at \(\C^*- \{1\}\). Thus,
% \[\K^*_\TT (V)_f\longrightarrow \K^*_\TT (B)_f\]
% is an isomorphism for any \(f\in \Laur\) such that \(f(1) = 0\). 
 
% In particular, \(V\) and \(B\) have the same \(\TT\)-spectrum, 
% with the possible exception of the point \(1\). 
%\end{lemma}

%\begin{proof}
%Consider the \(6\)-term exact sequence for the pair \((\overline{D}_V, S_V)\) 
%as in the proof of Lemma \ref{lem:finite_generation}. Since 
%\(\TT\) acts freely on \(S_V\), \(\TTspec (S_V) \subset \{1\}\subset \C^*\). 
%Hence localizing at \(f(X) \defeq X-1\) kills \(\K^*_\TT (S_V)\) and 
%the result follows. 
%\end{proof}

\begin{corollary}
\label{cor:UCT}
Let \(D\) be a \(\TT\)-C*-algebra in the boostrap category, such that 
\(D\rtimes \TT\) is also in the boostrap category, let \(X\) be a 
smooth, compact \(\TT\)-manifold, and \(f, \Omega\) be as in 
Theorem \ref{thm:restriction_to_stationary}.
 Then 
\begin{enumerate}
\item\( \KK_*^\TT ( C(X), D)_f \cong \Hom_{\Laur_f}\bigl( \K^*_\TT (X)_f, \K_*^\TT (D)_f\bigr)\)
\item \( \KK^\TT_*(\C, C(X)\otimes D)_f\cong \K^*_\TT(X)\otimes_{\Laur_f}\K_*^\TT(D).\) 
\end{enumerate}

\end{corollary}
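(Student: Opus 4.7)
My plan is to combine Theorem~\ref{thm:restriction_to_stationary} with the classical (non-equivariant) UCT and K\"unneth theorems, after reducing \(X\) to its stationary set \(F\). First, Theorem~\ref{thm:restriction_to_stationary} says that the restriction morphism \(\rho\in \KK^\TT(C(X), C(F))\) is invertible after localization at \(f\); tensoring \(\rho\) and its inverse with \(\id_D\) via the external product gives a \(\KK^\TT_f\)-equivalence \(C(X)\otimes D \simeq C(F)\otimes D\) and an isomorphism \(\KK^\TT_*(C(X), D)_f \cong \KK^\TT_*(C(F), D)_f\). Hence both (i) and (ii) reduce to the analogous statements with \(X\) replaced by \(F\), which carries the trivial \(\TT\)-action; note also that \(\K^*_\TT(X)_f \cong \K^*_\TT(F)_f\) as \(\Laur_f\)-modules.

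Since \(\TT\) acts trivially on \(F\), one has \(\K^*_\TT(F) \cong \K^*(F)\otimes_\C \Laur\), a free finitely generated \(\Laur\)-module, using that \(\K^*(F)\) is finite dimensional over \(\C\) (compact smooth manifold, complex coefficients). To pass to non-equivariant \(\KK\), I would apply Baaj-Skandalis duality \(\KK^\TT_*(C(F), D) \cong \KK^\Z_*(C(F)\rtimes \TT, D\rtimes \TT)\). Triviality of the action gives \(C(F)\rtimes \TT \cong C_0(\Z)\otimes C(F)\) with \(\Z\) acting by translation on the first factor, and Mackey-type imprimitivity then identifies this with \(\KK_*(C(F), D\rtimes \TT)\). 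For (ii), Green-Julg together with triviality of the action on \(C(F)\) directly yields \(\K^\TT_*(C(F)\otimes D) \cong \K_*\bigl(C(F)\otimes (D\rtimes \TT)\bigr)\). Now the classical UCT and K\"unneth theorems apply, since \(C(F)\) (smooth compact manifold) and \(D\rtimes \TT\) (by hypothesis) both lie in the bootstrap category and \(\K^*(F)\) is a free \(\C\)-module:
\[ \KK_*(C(F), D\rtimes \TT) \cong \Hom_\C(\K^*(F), \K_*(D\rtimes \TT)),\]
\[ \K_*\bigl(C(F)\otimes (D\rtimes \TT)\bigr) \cong \K^*(F)\otimes_\C \K_*(D\rtimes \TT),\]
where \(\K_*(D\rtimes \TT) \cong \K^\TT_*(D)\) by Green-Julg.

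To finish, I would invoke the adjunction \(\Hom_\C(V, M) \cong \Hom_\Laur(V\otimes_\C \Laur, M)\) and the identity \(V\otimes_\C M \cong (V\otimes_\C \Laur)\otimes_\Laur M\) (for \(V\) a \(\C\)-vector space and \(M\) a \(\Laur\)-module) to rewrite the right-hand sides above as \(\Hom_\Laur(\K^*_\TT(F), \K^\TT_*(D))\) and \(\K^*_\TT(F)\otimes_\Laur \K^\TT_*(D)\) respectively. Since \(\K^*_\TT(F)\) is finitely presented (in fact free), both \(\Hom\) and \(\otimes\) commute with localization at \(f\); combining this with \(\K^*_\TT(F)_f \cong \K^*_\TT(X)_f\) yields the stated isomorphisms.

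The main obstacle is the middle step: executing the Baaj-Skandalis plus imprimitivity reduction cleanly, and then verifying that every intermediate isomorphism respects the ambient \(\Laur\)-module structure, so that localization really does produce the claimed \(\Laur_f\)-module isomorphisms rather than merely \(\C\)-linear ones. An alternative would be to invoke an equivariant UCT/K\"unneth directly, as in \cite{Rosenberg-Schochet:UCT}, but the reduction above has the advantage of requiring only classical non-equivariant statements, where the freeness hypothesis on \(\K^*(F)\) is automatic over \(\C\).
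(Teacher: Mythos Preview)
Your proposal is correct and follows essentially the same route as the paper: reduce to the stationary set \(F\) via Theorem~\ref{thm:restriction_to_stationary}, exploit triviality of the \(\TT\)-action on \(F\) to pass to non-equivariant \(\KK\), apply the classical UCT/K\"unneth theorems, and then use the free-module identities to rewrite everything over \(\Laur\) before localizing. The only difference is that where you invoke Baaj--Skandalis duality plus imprimitivity to obtain \(\KK^\TT_*(C(F),D)\cong \KK_*(C(F),D\rtimes\TT)\), the paper simply cites the adjoint form of the Green--Julg theorem (for a trivial \(G\)-algebra in the first variable, \(\KK^G_*(A,B)\cong \KK_*(A,B\rtimes G)\)); this shortcut dissolves the ``main obstacle'' you flag, since that adjunction is \(\Rep(\TT)\)-linear by construction.
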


\begin{proof}
The class of \(\TT\)-spaces \(X\) for which both theorems hold 
(in \(\KK^\TT_f\))  is closed under \(\KK^\TT_f\)-equivalence so we may 
replace \(X\) by \(F\) by Theorem \ref{thm:restriction_to_stationary};
since \(F\) is a trivial \(\TT\)-space, \(\KK^\TT_*(C(F), D)\cong 
\KK_*(C(F), D\rtimes \TT)\) by the Green-Julg theorem, and by the 
UCT this is isomorphic to \(\Hom_\C\bigl( \K^*(F), \K_*(D\rtimes \TT)\bigr)
\cong \Hom_\C\bigl( \K^*(F), \K_*^\TT(D)\bigr)\). This implies the 
corresponding isomorphisms after localization. Now \(\K^*_\TT(X)_f\cong 
\K_\TT^*(F)_f \cong \bigl( \K^*(F)\otimes \Laur\bigr)_f \cong 
\K^*(F)\otimes \Laur_f\) and hence 
\(\Hom_{\Laur_f}\bigl( \K^*_\TT (X)_f, \K_*^\TT (D)_f\bigr)
\cong \Hom_{\Laur_f}\bigl( \K^*(F)\otimes \Laur_f, \K^*_\TT(D)_f\bigr)
\cong \Hom_\C\bigl( \K^*(F), \K^*_\TT(D)_f\bigr)\) which proves the 
first statement. The second follows similarly (see the proof of Lemma \ref{lem:lef_for_trivial_actions}.) 

\end{proof}

We end this section with a fairly precise description of 
\(\K^*_\TT (X)\) for smooth \(\TT\)-manifolds, starting with the 
following result, which uses ideas of Baum and 
Connes (see \cite{Baum-Connes:finite}).

\begin{theorem}
\label{thm:bigtheorem}
Let \(X\) be a smooth, compact \(\TT\)-manifold, 
\(F\subset X\) the stationary set. 
 
For \(\gamma \in \TT\) we 
endow the \(\C\)-vector space \(\K^*\bigl(\TT\backslash (X^\gamma - F)\bigr)\) with the 
\(\Laur\)-module structure by evaluation \(\Laur \to \C\) at \(\gamma\). 
Then 
\begin{equation}
\K^*_\TT (X - F) \cong 
\oplus_{\gamma \in \TTspec (X- F)} \K^*\bigl(\TT\backslash (X^\gamma-F)\bigr)
\end{equation} 
as \(\Laur\)-modules. 
%\item There is a \(6\)-term exact sequence 
%\[\xymatrix{\oplus_{\gamma \in \TTspec (X- F)} \K^0(\TT\backslash X^\gamma-F) \ar[r]& \K^0_\TT(X)\ar[r] & \K^0(F)\otimes \Laur \ar[d]^{\partial_0}\\ \K^1(F)\otimes \Laur \ar[u]^{\partial_1} & \K^1_\TT(X)\ar[l] & \oplus_{\gamma \in \TTspec (X- F)} \K^1(\TT\backslash X^\gamma -F)\ar[l]}\]
%\end{itemize}
 \end{theorem}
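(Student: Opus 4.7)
The plan is to combine the PID primary decomposition of $M \defeq \K^*_\TT(X-F)$ with an Atiyah--Segal-type localization argument that identifies each primary component with ordinary K-theory of a fixed-point orbit space. First I would record that $M$ is finitely generated (apply Lemma~\ref{lem:Segals_lemma} to $X$ and $F$, then use the six-term sequence for $F\subset X$ together with Lemma~\ref{lem:fg_exact}) and torsion with $\TTspec(X-F)$ a finite subset of the roots of unity in $\TT$ (by Lemma~\ref{lem:lemmas}(i), since $X-F$ has no stationary points). Because $\Laur$ is a principal ideal domain, $M$ admits a canonical primary decomposition
$$M \cong \bigoplus_{\gamma\in\TTspec(X-F)} M_\gamma,$$
where $M_\gamma$ is the $(X-\gamma)$-primary summand, equivalently the localization $M_{\mathfrak{m}_\gamma}$ at the maximal ideal $\mathfrak{m}_\gamma \defeq (X-\gamma)$.

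The second step is to identify $M_\gamma$ with $\K^*_\TT(X^\gamma - F)_{\mathfrak{m}_\gamma}$. Since $\TT$ is abelian, $X^\gamma$ is a closed $\TT$-invariant smooth submanifold of $X$ (by the same exponential-map argument as in Remark~\ref{rem:stationary_smooth}), so $X^\gamma - F$ is closed in $X-F$ with open complement $U \defeq (X-F) - X^\gamma$. No point of $U$ has $\gamma$ in its isotropy; covering $U$ by $\TT$-invariant open slices $\TT\times_H W$ with $\gamma\notin H$ (as in the proof of Lemma~\ref{lem:lemmas}(i)) and combining Lemma~\ref{lem:spec_for_slices} with the excision Lemma~\ref{lem:excision} produces a polynomial in $\Laur$ nonzero at $\gamma$ that annihilates some power of $\K^*_\TT(U)$. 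Hence $\K^*_\TT(U)_{\mathfrak{m}_\gamma} = 0$, and the six-term sequence for the pair $X^\gamma - F \subset X-F$, localized at $\mathfrak{m}_\gamma$, gives the desired identification.

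The third and hardest step is to show $\K^*_\TT(X^\gamma - F)_{\mathfrak{m}_\gamma} \cong \K^*\bigl(\TT\backslash(X^\gamma - F)\bigr)$ with $X$ acting as the scalar $\gamma$. On $X^\gamma - F$ the finite cyclic group $\Omega_n \defeq \langle\gamma\rangle$ acts trivially, so the $\TT$-action factors through $\bar\TT \defeq \TT/\Omega_n$ with only finite isotropies. Covering $X^\gamma - F$ by $\TT$-invariant open slices $\TT\times_H W$ with $\Omega_n\subset H$ finite, the Atiyah--Segal delocalized K-theory formula for the finite abelian group $H$ gives (with complex coefficients)
$$\K^*_\TT(\TT\times_H W) \cong \K^*_H(W) \cong \bigoplus_{h\in H} \K^*(W^h/H),$$
with $\Rep(H)$ acting on the $h$-summand by evaluation at $h$. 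Since $\gamma$ acts trivially on $W$ one has $W^\gamma = W$, so the $h=\gamma$ summand is $\K^*(W/H) = \K^*\bigl(\TT\backslash(\TT\times_H W)\bigr)$ --- precisely the localization at $\mathfrak{m}_\gamma$. The main obstacle is to verify that $M_\gamma$ is annihilated by $(X-\gamma)$ itself, not merely by some power, so that it genuinely is a $\C$-vector space on which $X$ acts by $\gamma$, and to glue these slice-level identifications via Mayer--Vietoris in a way compatible with localization at $\mathfrak{m}_\gamma$. Once this is done, the direct-sum decomposition follows by assembling the contributions for each $\gamma\in\TTspec(X-F)$.
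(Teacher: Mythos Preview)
Your proposal is correct and shares the essential ingredients with the paper's proof: both rest on the Baum--Connes delocalized description $\K^*_H(Y)\cong\bigoplus_{h\in H}\K^*(H\backslash Y^h)$ for finite abelian $H$, checked on slices and propagated by excision. The organization differs, however. You first take the primary decomposition of the torsion $\Laur$-module $\K^*_\TT(X-F)$, then for each $\gamma$ invoke Atiyah--Segal localization to reduce to $X^\gamma-F$, and finally run a slice-by-slice Mayer--Vietoris on $X^\gamma-F$. The paper instead defines in one stroke a comparison theory $\textup{F}(Z)\defeq\bigoplus_{\gamma\in\Omega}\K^*(\TT\backslash Z^\gamma)\cong\K^*(\widehat{Z})$ on all $Z$ coverable by finitely many $H$-slices with $H\subset\Omega$, observes from the $\widehat{Z}$ description that $\textup{F}$ is excisive, and then verifies $\K^*_\TT\cong\textup{F}$ on a single slice $U=\TT\times_H Y$ via Baum--Connes; an induction on the number of slices finishes.

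The main payoff of the paper's packaging is that the worry you flag---whether $M_\gamma$ is annihilated by $X-\gamma$ rather than a power---never arises: the isomorphism is established as an identification of two excisive $\Laur$-module-valued theories, and on slices the Baum--Connes formula already gives the exact module structure (evaluation at $h$ on each summand), so no separate argument about higher torsion is needed. Your route reaches the same conclusion, but you must carry the module structure through the Mayer--Vietoris induction explicitly; this is routine since all maps involved are $\Laur$-linear and the slice-level identification is by evaluation. Your intermediate step localizing to $X^\gamma-F$ is a pleasant extra geometric reduction the paper does not isolate, though it is not strictly necessary once one works with the global comparison theory $\textup{F}$.
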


\begin{remark}
The usual geometric effect of localization of \(\K^*_\TT (X)\)
 at \(\gamma \in \TT\) -- 
it annihilates the contribution of \(X-X^\gamma\), as we have 
seen -- is obviously nil 
in the case where \(\gamma = 1\). Thus Theorem 
\ref{thm:bigtheorem} goes further in this case, informing 
us that the stalk at \(1\) of the sheaf determined by 
\(\K^*_\TT (X-F)\) 
is \(\K^*\bigl(\TT\backslash (X-F)\bigr)\) (with \(\Laur\)-module structure 
by evaluation at \(1\in \C^*\).) 

\end{remark}

\begin{proof} 
Set \(\Omega \defeq \{ \gamma \in \TT \mid \gamma x = x \; \textup{some} \; x\notin F\} \subset \TTspec (X-F)\). \(\Omega \) is finite. 
We consider a theory defined on \(\TT\)-spaces (like 
\(X-F\)) which 
can be covered by a finite number of open \(H\)-slices,  
where 
\(H \subset \Omega\) is some subset. This class of spaces is clearly closed 
under passing to subspaces. If \(Z\) is such a space, let 
\[ \textup{F}(Z) \defeq 
\bigoplus_{\gamma \in \Omega} \K^*(\TT\backslash Z^\gamma)\]
with module structure evaluation of characters at 
\(\gamma\) in the corresponding summand. Observe that 
we may interpret this vector space as \(\K^*(\widehat{Z})\) where 
\[ \widehat{Z} \defeq \TT \backslash \{ (z,\gamma)\in Z\times \TT \mid \gamma z = z\}.\]
Indeed, the space \(\widehat{Z}\) fibres over \(\Omega\) with fibre 
\(\TT\backslash X^\gamma\) over \(\gamma\). 

If \(Y\subset Z\) is a closed \(\TT\)-invariant subspace of 
\(Z\) in our class, then \(\widehat{Y}\subset \widehat{Z}\) as a 
closed subspace, and \(\widehat{Z}-\widehat{Y} = 
\widehat{Z-Y}\). Hence an inclusion of a closed \(\TT\)-invariant 
subspace generates a corresponding 
\(6\)-term exact sequence and the theory \(\textup{F}\) is excisive.
 To show that it agrees with \(\K^*_\TT (\, \cdot \, )\) it is sufficient 
 then to verify this for an induced space \(U \cong \TT\times_H Y\). 
In this case \(\K^*_\TT (U)\cong \K^*_H(Y)\) as \(\Laur\)-modules, 
 where the \(\Laur\)-module action on \(\K^*_H(Y)\) factors through 
 the restriction \(\Laur \to \Rep (H)\) and the \(\Rep (H)\)-module 
 structure on \(\K^*_H(Y)\). By a result of Baum and Connes for 
 equivariant \(\K\)-theory of finite group actions (see 
 \cite{Baum-Connes:finite})  
 \[ \K^*_H(Y)\cong \bigoplus_{h \in H}\K^*(H\backslash Y^h),\]
 where the \(\Rep (H) \cong \Laur/(f_H)\)-module 
 structure on the right-hand-side is 
 by evaluation of characters at the points of \(H\) (here 
 \(f_H = \prod_{h\in H} X-h\) and \( (f_H)\) is the ideal 
 of \(\Laur\) generated by \(f_H\).) We are using the 
 fact that \(H\) is abelian, so that the centralizer of \(h\) in 
 \(H\) is \(H\). Localizing at \(\gamma \in \Omega\) yields
 zero unless \(\gamma \in H\), and in this case, 
  \[ \K^*_H(Y)_\gamma \defeq 
 \K^*_H(Y)\otimes_{\Laur} \Laur_\gamma 
 \cong \bigoplus_{h\in H} \left[ \K^*(H\backslash Y^h)\otimes_{\Laur} \Laur_\gamma \right].\]
 Now for each term on the right-hand-side, the tensor product is over the 
 evaluation map \(\Laur \to \C\) at \(h\). It follows that
  all terms in the sum on the right-hand-side vanish except for \(h = \gamma\).  
  The \(\Laur_\gamma\)-module structure on this term is evaluation of 
 polynomials at \(\gamma\). Thus, 
 \[ \K^*_\TT(U)_\gamma \cong \K^*(H\backslash Y^\gamma)_\gamma.\]
 Given that \(H\backslash Y^\gamma \cong \TT\backslash U^\gamma\), 
 the result follows.

\end{proof}

In particular, we now have an exact description of \(\TTspec (X)\) when 
\(X\) is a compact smooth manifold. 

\begin{corollary}
Let \(X\) be a compact smooth
 \(\TT\)-manifold with no stationary points. Then 
\[ \TTspec (X) = \{ \gamma\in \TT \mid \K^*(\TT\setminus X^\gamma)\not= 0\}.\]
\end{corollary}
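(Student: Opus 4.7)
The statement is essentially a re-packaging of Theorem~\ref{thm:bigtheorem} in the special case $F = \emptyset$, so the plan is to apply that theorem directly and then read off the support from the explicit direct-sum decomposition. Since $X$ has no stationary points, $X^\gamma - F = X^\gamma$, and Theorem~\ref{thm:bigtheorem} produces the $\Laur$-module isomorphism
\[
 \K^*_\TT(X) \;\cong\; \bigoplus_{\gamma \in \TTspec(X)} \K^*\bigl(\TT\backslash X^\gamma\bigr),
\]
in which the summand indexed by $\gamma$ carries the $\Laur$-module structure given by evaluation of polynomials at $\gamma$.

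Next I would compute supports. A nonzero complex vector space on which $\Laur$ acts by evaluation at $\gamma$ has annihilator ideal $(X-\gamma)$ and hence support $\{\gamma\}$, while the zero module has support $\emptyset$. Since the support of a direct sum is the union of the supports of the summands, the support of the right-hand side equals $\{\gamma \in \TTspec(X) : \K^*(\TT\backslash X^\gamma) \neq 0\}$. Comparing with the definition of $\TTspec(X)$ as the support of the left-hand side forces every summand appearing in the decomposition to be nonzero, which gives the inclusion
\[
 \TTspec(X) \;\subseteq\; \{\gamma \in \TT : \K^*(\TT\backslash X^\gamma) \neq 0\}.
\]

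For the reverse inclusion I would argue as follows. Suppose $\K^*(\TT\backslash X^\gamma) \neq 0$. Then in particular $X^\gamma \neq \emptyset$, so $\gamma$ fixes some point of $X$ and hence lies in the finite set $\Omega$ of Lemma~\ref{lem:products}. The proof of Theorem~\ref{thm:bigtheorem} (via the slice computation $\K^*_H(Y)_\gamma \cong \K^*(H\backslash Y^\gamma)_\gamma$ patched by excision) actually establishes that for every $\gamma \in \Omega$ the localization $\K^*_\TT(X)_\gamma$ is isomorphic to $\K^*(\TT\backslash X^\gamma)$ with evaluation-at-$\gamma$ structure, and is zero otherwise. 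Nonvanishing of $\K^*(\TT\backslash X^\gamma)$ therefore places $\gamma$ in the support of $\K^*_\TT(X)$, i.e.\ in $\TTspec(X)$, finishing the proof. The only real obstacle is bookkeeping: Theorem~\ref{thm:bigtheorem} indexes the decomposition over $\TTspec(X)$, so a priori one has to revisit the stalkwise identification implicit in its proof to rule out the existence of some $\gamma \in \Omega \setminus \TTspec(X)$ with $\K^*(\TT\backslash X^\gamma)$ nevertheless nonzero.
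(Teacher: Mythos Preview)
Your proposal is correct and follows the paper's intended approach: the corollary is stated as an immediate consequence (``In particular\ldots'') of Theorem~\ref{thm:bigtheorem}, whose proof actually establishes the decomposition indexed over the full isotropy set \(\Omega\), not just over \(\TTspec(X)\). You have been more careful than the paper in isolating and resolving the indexing issue---noting that the stalkwise identification \(\K^*_\TT(X)_\gamma \cong \K^*(\TT\backslash X^\gamma)\) for all \(\gamma\in\Omega\) (and the trivial observation \(X^\gamma=\emptyset\) for \(\gamma\notin\Omega\)) is what makes both inclusions go through.
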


Before the proof, we use Theorem \ref{thm:bigtheorem} 
determine the exact relation between 
the torsion submodule of \(\K^*_\TT (X)\) and the torsion module 
\(\K^*_\TT (X-F)\).

Let \(\textup{Tors}\bigl(\K^i_\TT(X)\bigr)\) be the torsion part of 
\(\K^i_\TT (X)\) and \(\textup{Free}\bigl( \K^i_\TT(X)\bigr)\) the 
free part. 
The \(6\)-term exact sequence associated 
to the stationary set \(F\subset X\) yields  
\emph{surjections}
\[\K^i_\TT (X- F) \to \textup{Tors}\bigl(\K^i_\TT(X)\bigr)\]
since the map \(\K^i_\TT (X)\to \K^i_\TT (F)\) vanishes on the 
torsion part, since \(\K^*_\TT (F)\) is free, 
and 
\emph{injections} 
\[ \textup{Free}\bigl( \K^i_\TT(X)\bigr) \to \K^i_\TT (F)\cong \K^i(F)\otimes \Laur,\]
since the map \(\K^{i+1}_\TT (X-F) \to \K^{i+1}_\TT(X)\) has range in the 
torsion subgroup.

We have the boundary maps 
\begin{equation}
\label{eq:boundary_maps}
\partial_i\colon \K^{i-1}(F)\otimes \Laur \longrightarrow
\K^i_\TT(X-F)\bigr)\end{equation}
and thus 
\[\textup{coker}(\partial_i) \cong \textup{Tors}\bigl(\K^i_\TT(X)\bigr), \;\;\;
\ker(\partial_{i+1}) \cong \textup{Free}\bigl(\K^i_\TT(X)\bigr).\]

Theorem \ref{thm:bigtheorem} and some geometric 
arguments (using smoothness) tells us more. 

\begin{corollary}
\label{cor:spectra_away_from_one}
If \(X\) is a smooth compact \(\TT\)-manifold, then 
the range of \(\partial_i \colon \K^i_\TT (F) \to \K^{i+1}_\TT (X-F)\) 
is supported at \(1\in \C^*\). 
Hence \(\partial_i\) factors through a map  
\[ \partial_i'\colon \K^i_\TT (F) \to \K^{i+1}(\TT\setminus X-F).\]
Thus 
\(\textup{Tors}(\K^i_\TT (X))_z\cong \K^i_\TT (X-F)_z\) for 
all \(z\in \TT-\{1\}\), and for the component at \(1\in \C^*\) we have 
\[\textup{Tors}(\K^i_\TT (X)_1) \cong 
\K^i (\TT\backslash X-F) \, / \, \textup{im}(\partial_{i+1}').\] 

%In other words, the the torsion modules 
%\(\textup{Tors}\bigl(\K^*_\TT (X)\bigr)\) and \(\K^*_\TT (X-F)\) 
%have the same stalks except possibly at 
%\(1\in \C^*\). 

Furthermore, the 
free modules \(\textup{Free}\bigl( \K^*_\TT(X)\bigr)\) and 
\(\K^*_\TT(F)\) have the same rank in each dimension.  

\end{corollary}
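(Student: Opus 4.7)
The plan is to reduce to the Euler-class localization of Lemma \ref{lem:vector_bundles} applied to the normal bundle of $F$. Concretely, I choose a $\TT$-invariant Riemannian metric so that an open $\TT$-invariant tubular neighborhood $U$ of $F$ is $\TT$-equivariantly diffeomorphic to the total space of the normal bundle $\nu \to F$. Naturality of the $6$-term sequences associated to the pair $F \subset U \subset X$ factors
\[
 \partial_i \colon \K^i_\TT (F) \xrightarrow{\partial^\nu} \K^{i+1}_\TT(\nu - F) \xrightarrow{j_*} \K^{i+1}_\TT (X - F),
\]
where $\partial^\nu$ is the boundary from $0 \to C_0(\nu - F) \to C_0(\nu) \to C(F) \to 0$ and $j_*$ is induced by the open inclusion $\nu - F \hookrightarrow X - F$. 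It therefore suffices to show that $\textup{im}(\partial^\nu)$ is supported at $\{1\} \subset \C^*$.

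For this I analyze $\partial^\nu$ via the Thom/Euler picture: the restriction $\K^*_\TT(\nu) \to \K^*_\TT(F)$ is an isomorphism by retraction onto the zero section, and under the Thom isomorphism $\K^*_\TT(\nu) \cong \K^{*-2d}_\TT(F)$ (with $d = \dim_\C \nu$) this restriction becomes multiplication by the $\K$-theoretic Euler class $e_\nu \in \K^{2d}_\TT(F)$. The Remark after Lemma \ref{lem:vector_bundles} says that $e_\nu$ becomes invertible after localization at $\C^* - \{1\}$, so by exactness of the sequence for $F \subset \nu$ the localized $\partial^\nu$ vanishes: every element of $\textup{im}(\partial^\nu)$ is annihilated by some power of $X - 1$ and thus supported at $\{1\}$. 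Pushing forward by $j_*$ preserves this, and by Theorem \ref{thm:bigtheorem} the $\gamma = 1$ summand of $\K^{i+1}_\TT(X - F)$ is precisely $\K^{i+1}\bigl(\TT \backslash (X - F)\bigr)$, yielding the desired factorization $\partial_i = \iota \circ \partial_i'$.

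The Tors decomposition now drops out of the identification $\textup{Tors}(\K^{i+1}_\TT(X)) \cong \textup{coker}(\partial_i)$ from the introductory discussion: at $z \ne 1$, the image of $\partial_i$ vanishes in the localization, so $\textup{coker}(\partial_i)_z = \K^{i+1}_\TT(X - F)_z$; at $z = 1$ only the $\gamma = 1$ summand of $\K^{i+1}_\TT(X - F)$ survives localization and the cokernel becomes $\K^{i+1}\bigl(\TT \backslash (X - F)\bigr)/\textup{im}(\partial_i')$. For the rank statement, $\textup{Free}(\K^i_\TT(X)) \cong \ker(\partial_i)$; since $X - F$ has no stationary points, $\K^{i+1}_\TT(X - F)$ is a torsion $\Laur$-module by Lemma \ref{lem:lemmas}, so $\textup{im}(\partial_i)$ is torsion and $\ker(\partial_i)$ has the same $\Laur$-rank as $\K^i_\TT(F) = \K^i(F) \otimes \Laur$, namely $\dim_\C \K^i(F)$.

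The principal technical point is the Thom/Euler identification of $\partial^\nu$: the cleanest route is via the sphere bundle exact sequence for $\nu$ combined with the Thom isomorphism, matching the connecting map to multiplication by $e_\nu$. One should also keep in mind that the invertibility of $e_\nu$ from the Remark rests on $\TT$ acting freely on $\nu - F$, so the argument proceeds cleanly under that hypothesis on the normal representations at points of $F$, and some care is required when the normal weights are not $\pm 1$.
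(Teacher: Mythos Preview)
Your argument is correct and close in spirit to the paper's, but with a different packaging for the key step and a simpler endgame for the rank statement.

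Both proofs factor \(\partial_i\) through the normal bundle \(\nu\) of \(F\) and rely on \(\TT\) acting freely on \(\nu-0\). The paper, however, does not invoke the Euler class: it identifies the extension class \([\partial]\in\KK^\TT_1(C(F),C_0(X-F))\) with an explicit Connes--Skandalis correspondence supported in a tubular collar \(U_F-F\cong S\nu\times\R\), and then simply observes that since \(\TT\) acts freely on \(U_F-F\), the open-embedding pushforward lands in a module supported at \(\{1\}\). You instead use the Thom isomorphism and the Remark after Lemma~\ref{lem:vector_bundles} to see that the restriction \(\K^*_\TT(\nu)\to\K^*_\TT(F)\) becomes an isomorphism after inverting \(X-1\), whence \(\partial^\nu\) vanishes there. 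These are two presentations of the same localization fact; your route is more algebraic and avoids the correspondence machinery, while the paper's makes the geometric source of the support restriction visible.

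For the rank statement your argument is more direct than the paper's. You observe that \(\K^{*+1}_\TT(X-F)\) is torsion (Lemma~\ref{lem:lemmas}), so the image of \(\partial\) is torsion and hence the kernel has full rank in the free module \(\K^*_\TT(F)\). The paper instead localizes at \(\C^*-\{1\}\), uses the just-proved vanishing of \(\partial\) to identify \(\ker(\partial)_f\) with \(\K^*_\TT(F)_f\), and then notes that localizing a free module does not change its rank. Your version needs neither the first part of the corollary nor any localization.

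Your closing caveat is on the mark: both your Euler-class argument and the paper's free-action argument tacitly assume \(\TT\) acts freely on \(\nu-0\), which requires the normal weights at every point of \(F\) to be \(\pm1\). The paper asserts this freeness without proof (here and already in the proof of Lemma~\ref{lem:products}); when higher weights occur the support of \(\mathrm{im}(\partial^\nu)\) may spread to the roots of unity appearing as isotropy in \(S\nu\), and the statement as written needs adjustment.
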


\begin{remark}
The Lefschetz fixed-point theorem discussed below 
implies that the difference in ranks of the free part of \(\K^0_\TT(X)\) and 
the free part of \(\K^1_\TT (X)\) equals the difference of ranks of 
the \(\C\)-vector spaces \(\K^0(F)\) and \(\K^1(F)\). The above 
statement is stronger, since it holds before taking differences. 
\end{remark}

The boundary maps in the \(6\)-term exact sequence of 
Theorem \ref{thm:bigtheorem} can be computed fairly 
precisely if \(X\) is a smooth manifold with smooth 
\(\TT\)-action, and this also proves the Corollary 
\ref{cor:spectra_away_from_one}. 

For the definition of correspondence, used below, see the 
discussion in \S \ref{sec:lef}. 

\begin{proof} (Of Corollary \ref{cor:spectra_away_from_one}). 
\(F\) is a closed, 
smooth submanifold of \(X\). 
Let \(\nu\) be the normal bundle of the 
stationary set \(F\subset X\); it can be endowed 
with a \(\TT\)-action and invariant Riemannian 
metric. Let \(\hat{\varphi}\colon \nu \to X\) the 
tubular neighbourhood embedding. 

Let \(S\nu\) be the sphere bundle of 
\(\nu\) and \(\pi\colon \nu \to F\) the bundle 
projection. Let \(j\colon S\nu \to X\) 
be its restriction to \(S\nu\). Note that 
\(j(S\nu)\) is disjoint from \(F\) and that \(j\) is a canonically 
\(\TT\)-equivariantly 
\(\K\)-oriented embedding with trivial normal bundle.
To see this, define 
\[\hat{f}\colon S\nu \times \R \cong U_F \subset X-F, \; \; \; 
\hat{f} (x, \xi, s) \defeq \hat{\varphi} (s\xi ).\] 
The restriction of \(\hat{f}\) to the zero section 
\(S\nu \times \{0\}\) is the embedding \(j\). 

The class in \(\KK_1^\TT(C(F), C_0(X-F))\) 
of the \(\TT\)-equivariant extension 
\[ 0 \longrightarrow C_0(X-F)\longrightarrow 
C(X) \longrightarrow C(F) \longrightarrow 0\]
is equal 
(see \cite{Connes-Skandalis:Longitudinal} Proposition 3.6.; the 
equivariant version goes through in the same way since 
we have a \(\TT\)-equivariant normal bundle) to the class 
of the 
\(\TT\)-equivariant correspondence 
\[S\nu \xleftarrow{\pi_{S\nu}}(S\nu \times \R, \beta_\R) 
\xrightarrow{\hat{f}}X-F\]
where \(\beta_\R\in \K^1_\TT (\R)\) is the Bott class
(for the trivial \(\TT\)-action on \(\R\).) 

Hence the class
\(\partial [V] \in \K^1_\TT (X-F)\) is then represented by the 
smooth \(\TT\)-equivariant correspondence 
\( \pnt \leftarrow (S\nu, \pi_{S\nu}^*(V) ) \xrightarrow{j} X\setminus F\), 
alternatively, as the class
\[ \hat{f}_! \bigl( \pi_{S\nu}^*(V)\cdot \beta_\R\bigr)\in \K^1_\TT(X-F)\]
of the Thom class of the (trivial) 
normal bundle, pushed forward to \(X-F\) via \(\hat{f}\).

Note that since \(\TT\) acts freely on \(\nu - 0\), the open neighbourhood
\(U_F\) of \(\hat{\varphi}(S\nu)\) may be assumed to meet none of the 
\(X^\gamma\) with \(\gamma\in \TT - \{1\})\).  Hence localizing at \(\gamma\not= 1\)
kills the range of \(\partial_0\), so its range is contained 
in the component of \(\K^1_\TT(X-F)\) over 
\(1\in \TT\). Similarly for \( i=1\). 

For the last statement, 
we know from the general discussion above that 
\(\ker(\partial_{i+1}) \cong \textup{Free}\bigl(\K^i_\TT(X)\bigr)\) as \(\Laur\)-modules, 
which implies the corresponding statement after localization at \(\C^*-\{1\}\). 
 But we have just argued that \(\partial_{i+1}\) induces the zero map after 
localization at \(\C^*-\{1\}\), so that its kernel after localization becomes 
\(\K^i_\TT(F)_f\) (\(f(X) = X-1\)). Hence the free \(\Laur_f\)-modules 
\(\textup{Free}\bigl(\K^i_\TT(X)_f\bigr)\) and \(\K^i_\TT(F)_f\) are 
isomorphic, so have the same rank, and it follows 
that \(\textup{Free}\bigl(\K^i_\TT(X)\bigr)\) and \(\K^i_\TT(F)\) have 
the same rank also, since localizing a free module does not change 
its rank.

\end{proof}

\begin{remark}
We make several remarks about the proof. 
\begin{enumerate}
\item We can describe the maps \(\partial_i'\) more 
precisely. In the proof of 
Corollary \ref{cor:spectra_away_from_one} we 
observed that there is a \(\TT\)-equivariant correspondence
\[S\nu \xleftarrow{\pi_{S\nu}}(S\nu \times \R, \beta_\R) 
\xrightarrow{\hat{f}}X-F.\]
In fact by shrinking the neighbourhood 
\(U_F\) of \(S\nu\) if needed so that it is disjoint from 
\(F\), we can factor \(\hat{f}\) through 
an open embedding 
\( \hat{f}'\colon S\nu \times \R \to U_F-F\) and the 
open embedding \(U_F-F \to X-F\). The first yields a 
class in \(\KK^1_\TT ( C(S\nu), C_0(U_F - F))\) but 
this group maps, using descent, to 
 \(\KK^1_\TT (C(\TT \backslash S\nu), C_0(U_F-F))\) since 
 \(\TT\) acts freely on \(S\nu\) and \(U_F-F\). 
 Now the open embedding 
 \(U_F-F \to X-F\) induces an open embedding of 
 quotient spaces \(\TT \backslash U_F -F \to 
 \TT \backslash X-F\) and an element 
 \(j! \in \KK(C_0(\TT\backslash U_F-F), C_0(\TT\backslash X-F))\). 
 The map \(\partial_i'\) is the composition 
  \begin{multline}
   \K^i_\TT (F) \xrightarrow{\pi_{S\nu}^*}\K^i_\TT(S\nu) 
  \cong \K^i (\TT \backslash S\nu) \xrightarrow{\hat{f}'}
  \K^{i+1}(U_F-F)  \xrightarrow{j!}\K^{i+1}(\TT\backslash X-F).\end{multline}
\item The boundary map \(\partial_0\colon \K^0(F)\otimes \Laur \to \K^1_\TT (X-F)\) 
may be understood as giving an obstruction to extending a 
\(\TT\)-equivariant vector bundle on \(F\) to a \(\TT\)-equivariant 
vector bundle on \(X\): this is possible for a given \([V]\) only if 
\(\partial_0[V]= 0 \), which is if and only if the class
\[ \hat{f}_! \bigl( \pi_{S\nu}^*(V)\cdot \beta_\R\bigr)\in \K^1(\TT\backslash X-F)\] 
vanishes. 
\end{enumerate}
\end{remark}

%\begin{remark}
%It would be interesting to study further the maps \(\partial_i'\) and the 
%obstruction groups \(\K^i(\TT\backslash X-F)\,/\,\textup{im}(\partial_i')\) 
%but 
%we do not pursue this further now.  
%\end{remark}

\section{The Lefschetz theorem}
\label{sec:lef}
\begin{definition}
\label{def:lef_map}
Let \(X\) be a smooth, compact
 \(\TT\)-manifold. Let 
 \begin{itemize}
 \item \(D\in \KK^\TT_0(C_0(\Tvert X), \C)\) be the class of the \(\TT\)-equivariant Dirac operator on the almost-complex manifold \(\Tvert X\). 
 \item \(\Theta \in \KK^\TT_0\bigl( C_0(X), C_0(X\times \Tvert X)\bigr) \) the class of the \(\TT\)-equivariant \(\K\)-oriented embedding \(\rho \colon X\to X\times \Tvert X\), 
 \(\rho (x) \defeq 
 \bigl(x, (x,0)\bigr)\). 
 \item \(s\) be the proper \(\TT\)-map \( \Tvert X\to X\times \Tvert X\), 
 \(s (x,\xi) \defeq \bigl( (x,\xi), x \bigr)\).   
  \end{itemize}

 Then the \emph{Lefschetz map} (see \cite{Emerson-Meyer:Dualities}) 
\[ \Lef\colon \KK^\TT_*\bigl(C(X), C(X)\bigr) \to \KK^\TT_*(C(X), \C\bigr)\]
is the composition
\begin{multline}
\label{eq:lef_map}
\KK^\TT_*\bigl(C(X), C(X)\bigr)
\xrightarrow{\otimes_\C 1_{\Tvert X}}
\KK^\TT_*( C_0(X\times \Tvert X), C_0(X\times \Tvert X) ) 
\\ \xrightarrow{ s^*}
\KK^\TT_*\bigl(C_0(X\times \Tvert X), C_0(\Tvert X)\bigr)
\\ \xrightarrow{\otimes_{C_0(\Tvert X)} D}
\KK^\TT_*\bigl( C_0(X\times \Tvert X), \C)
\xrightarrow{\Theta\otimes_{C_0(X\times \Tvert X)}} \KK^\TT_*(C(X), \C)
 \end{multline}
\end{definition}

Thus the Lefschetz map associates to an equivariant 
morphism \(X\to X\) in \(\KK^\TT\), an equivariant 
\(\K\)-homology class for \(X\). Such a class has an index in 
\(\Rep (\TT) \cong \Laur\). 

\begin{definition}
\label{def:lindex}
The \emph{Lefschetz index} \(\Lindex (\Lambda)\), where 
\(f\in \KK^\TT_*(C(X), C(X))\) is the \(\TT\)-equivariant index 
\[ \Lindex (\Lambda) \defeq (\pnt)_*\, \Lef (\Lambda) \in \Rep (\TT) \cong \Laur,\]
where \(\pnt \colon X \to \pnt\) is the map to a point. 
\end{definition}

In \cite{Emerson-Meyer:Correspondences}
and \cite{Emerson-Meyer:Wrong_way} we proved that 
\(\TT\)-equivariant correspondences are cycles for a  
bivariant homology theory isomorphic to 
\(\KK^\TT\), with some restrictions on its arguments (\emph{e.g.} to 
compact smooth \(\TT\)-manifolds.) 

 Hence both the domain and co-domain of the 
Lefschetz map can be described in terms of equivalence 
classes of correspondences; since 
we have defined the Lefschetz map itself in terms of 
correspondences, the Lefschetz map can be described 
in purely geometric terms. We give a brief summary. 

Suppose the following data is given (see 
the original reference \cite{Connes-Skandalis:Longitudinal}), or 
\cite{Emerson-Meyer:Correspondences}.)

\begin{itemize}
\item \(M\) is a smooth \(\TT\)-manifold (not necessarily compact). 
\item \(b\colon M \to X\) is a smooth \(\TT\)-map (not necessarily proper). 
\item \(\xi \in \textup{RK}^*_{\TT, X}(M)\) is an equivariant \(\K\)-theory class with 
compact support along the fibres of \(b\). 
\item \(f\colon M \to X\) is a \(\TT\)-equivariant smooth \(\K\)-oriented map. 
\end{itemize}
This data is sometimes summarized by a diagram 
\( X \xleftarrow{b} (M, \xi) \xrightarrow{f} X\). The quadruple 
\( (M, b, f, \xi)\) is a 
\emph{\(\TT\)-equivariant correspondence} from \(X\) to \(X\). 

It is convenient to assume that the 
correspondence -- denote it \(\Lambda\) --
also satisfies 
\begin{itemize}
\item \(f\colon M \to X\) is a submersion. 
\item The map \(X \to X\times X\), \(x \mapsto \bigl( f(x), b(x)\bigr)\) is 
transverse to the diagonal \(X\to X\times X\). 
\end{itemize}

These conditions imply that the \emph{coincidence space}
\[ \Fixed \defeq \{ x\in M \mid f(x) = g(x)\] has the structure of a 
smooth, equivariantly \(\K\)-oriented
 \(\TT\)-manifold (probably disconnected, but with only finitely many 
 connected components, but each of the same dimension.)
 
  Clearly it comes with a 
 map \(b|_\Fixed\colon \Fixed \to X\), so we obtain a Baum-Douglas 
 cycle \((\Fixed, b|_\Fixed, \xi|_\Fixed)\) 
 for \(X\) by restricting \(\xi\) to \(\Fixed\subset M\). 
 
 To a correspondence is associated a class, which by abuse of 
 notation we also denote by \(\Lambda\), in \(\KK^\TT_*(C(X), C(X))\).  
 Here \(* = \dim(M) - \dim (X) + \dim (\xi)\). See 
 \cite{Emerson-Meyer:Correspondences} for the details.

 The following is a straightforward manipulation with correspondences. 
 
 \begin{proposition}
\label{prop:lef_geometrically}
If \(\Lambda \in \KK^\TT_*(C(X), C(X))\) is represented by the \(\TT\)-equivariant 
correspondence in general position in the sense 
described above, then \(\Lef (\Lambda)\) is represented by the 
Baum-Douglas cycle \((\Fixed, b|_\Fixed, \xi|_\Fixed)\) for \(X\). 
In particular, \[\Lindex (\Lambda
) = \ind_\TT (D_\Fixed\cdot \xi|_{\Fixed})\in \Rep (\TT) \cong \Laur\]
holds; that is, the Lefschetz index of \(\Lambda\) equals  
the \(\TT\)-index of the \(\TT\)-equivariant Dirac operator on 
the coincidence manifold \(\Fixed\), twisted by \(\xi|_\Fixed\). 
\end{proposition}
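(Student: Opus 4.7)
The plan is to unwind each stage of the definition \eqref{eq:lef_map} of $\Lef$ in the language of smooth $\TT$-equivariant correspondences, using two standard facts: Kasparov products are realised by transverse fibre products of correspondences, and a $\TT$-equivariantly proper smooth map $g\colon Y \to Z$ represents the correspondence $Z \xleftarrow{g} Y \xrightarrow{\mathrm{id}} Y$ with trivial coefficient. The transversality hypothesis on $\Lambda$ is used only at the final stage.

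Tracking the correspondence $\Lambda = (M,b,f,\xi)$ through the first three stages is essentially mechanical. The external product $\Lambda \otimes 1_{\Tvert X}$ is represented by
\[
X \times \Tvert X \xleftarrow{b \times \mathrm{id}} (M \times \Tvert X,\, \pi_M^*\xi) \xrightarrow{f \times \mathrm{id}} X \times \Tvert X.
\]
Composition with $s^*$ amounts to taking the fibre product of $f \times \mathrm{id}$ with $s$; a direct check identifies this fibre product with the pullback $f^*(\Tvert X)$, with b-map $(m,v) \mapsto (b(m),v)$, f-map $(m,v) \mapsto v$, and unchanged class $\pi_M^*\xi$. Composition with $D = [(\Tvert X, \mathrm{id}, \pnt, 1)]$ collapses the f-map to $\pnt$ and produces the $\K$-homology class for $X \times \Tvert X$ represented by
\[
X \times \Tvert X \xleftarrow{(b\circ\pi_M,\, \pi)} (f^*\Tvert X,\, \pi_M^*\xi) \longrightarrow \pnt,
\]
with $\K$-orientation obtained by combining the $\K$-orientation of $f$ with the almost-complex structure on the fibres of $\Tvert X$.

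The substantive step is the final composition with $\Theta = [(X, \mathrm{id}, \rho, 1)]$, controlled by the fibre product of $\rho$ with the b-map $(b \circ \pi_M, \pi)$ of the previous stage. Solving $\rho(x) = (x, (x,0))$ against $(b(m), v)$ with $v \in T_{f(m)} X$ forces $x = b(m) = f(m)$ and $v = 0$, so the underlying set of the fibre product is canonically identified with $\Fixed$, sitting as the zero section of $f^*\Tvert X \big|_\Fixed$. The transversality hypothesis on $\Lambda$---that $m \mapsto (f(m), b(m))$ is transverse to the diagonal---is precisely what is needed for $\rho(X)$ and the image of $(b \circ \pi_M, \pi)$ to meet transversely in $X \times \Tvert X$; whence $\Fixed$ is a smooth $\TT$-equivariant submanifold and inherits a canonical $\TT$-equivariant $\K$-orientation from the transverse intersection of $\K$-oriented objects. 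Reinterpreting the resulting correspondence from $X$ to $\pnt$ gives exactly the Baum-Douglas cycle $(\Fixed, b|_\Fixed, \xi|_\Fixed)$ for $X$.

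The index formula then follows immediately: $\Lindex(\Lambda) = \pnt_*\Lef(\Lambda)$, and pushing the Baum-Douglas cycle down to a point computes by construction the $\TT$-equivariant index of the Dirac operator on $\Fixed$ twisted by $\xi|_\Fixed$. The main obstacle is bookkeeping rather than conceptual: at each fibre product one must check that the composed $\K$-orientations assemble correctly, and that the final $\K$-orientation on $\Fixed$ coincides with the standard Baum-Douglas orientation on a transverse coincidence of $\K$-oriented maps. This reduces to the compatibility properties of $\K$-orientations under pullback, composition, and transverse fibre product as developed in \cite{Emerson-Meyer:Correspondences}.
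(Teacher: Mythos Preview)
Your argument is correct and is exactly the ``straightforward manipulation with correspondences'' the paper alludes to. In fact the paper does not give a proof of this proposition at all: immediately after the statement it says ``We will not prove this proposition; the proof can be found in \cite{Emerson-Meyer:Lefschetz} or the reader reasonably familiar with correspondences can prove it himself.'' Your write-up is precisely the computation the paper leaves to the reader---unwinding \eqref{eq:lef_map} stage by stage in the calculus of correspondences, with the transversality hypothesis invoked at the final fibre product to identify the coincidence space $\Fixed$ and its $\K$-orientation.
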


We will not prove this proposition; the proof can be found in 
\cite{Emerson-Meyer:Lefschetz} or the reader reasonably familiar 
with correspondences can prove it himself.

We aim to prove that \(\Lindex (\Lambda) 
= \trace_{\Laur} (\Lambda_*)\) for 
where 
\(\Lambda_*\colon \K^*_\TT (X) \to \K^*_\TT (X)\) is the action of 
\(\Lambda\) on equivariant \(\K\)-theory; note that \(\Lambda_*\) is a \(\Laur\)-module 
map. Proving this statement has nothing to do with correspondences; it 
depends only on formal properties of \(\KK^\TT\). 

The
 result 
provides a homological interpretation of the Lefschetz index 
along the lines of the classical theorem. 

By the \emph{trace} we mean the following. Firstly, since  
\(X\) is a smooth compact manifold, 
\(\K^*_\TT (X)\) is a finitely generated \(\Laur\)-module. 
Therefore (in each dimension \(* = 0,1\) 
it decomposes into a free part and a torsion part. 
Any \(\Laur\)-module self-map of \(\K^*_\TT (X)\) of even degree will 
induce a grading-preserving map on \(\K\)-theory. We will define the 
trace of such a map to be the differences of the \(\Laur\)-valued 
module traces on 
\(\K^0_\TT(X)\) and \(\K^1_\TT(X)\). To define these individually, 
consider any \(\Laur\) module, which we write as  
\(M = T\oplus \Laur^k\) where \(T\) is torsion. Any self \(\Laur\)-module 
map of \(M\)
 sends \(T\) to itself and hence 
has an upper-triangular form 
\(L = \left[ \begin{matrix} A & B \\ 0 & C\end{matrix}\right]\)
and we let \(\trace_{\Laur} (L) \defeq \trace_{\Laur} (C)\). This is 
uniquely defined. 

A \(\Laur\)-module self-map of \(\K^*_\TT (X)\) with odd degree 
will have trace zero, by definition. 

\begin{theorem}(Lefschetz theorem in \(\KK^\TT\)). 
\label{thm:lef}
Let \(X\) be a compact smooth \(\TT\)-manifold and 
\(\Lambda \in \KK^\TT_*(C(X), C(X))\). Then 
\(\Lindex (\Lambda) 
= \trace_{\Laur} (\Lambda_*)\). 
\end{theorem}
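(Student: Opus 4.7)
The plan is to reduce the identity to the case of a trivial $\TT$-action on the stationary set $F$ via localization, and then invoke the classical (non-equivariant) $\K$-theoretic Lefschetz theorem.

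First, fix $f \in \Laur$ vanishing on $\Omega = \{t \in \TT : tx = x \text{ for some } x \in X-F\}$, which is finite by Lemma \ref{lem:products}. By Theorem \ref{thm:restriction_to_stationary}, the restriction morphism $\rho \in \KK^\TT(C(X), C(F))$ is a $\KK^\TT_f$-equivalence; let $\bar\Lambda \in \KK^\TT_*(C(F), C(F))_f$ denote the class corresponding to $\Lambda$ under conjugation by $\rho$. Both sides of the desired identity are compatible with this equivalence and with localization: $\trace_\Laur(\Lambda_*)$ maps to $\trace_{\Laur_f}((\Lambda_*)_f) = \trace_{\Laur_f}((\bar\Lambda_*)_f)$, because the torsion subgroup of $\K^*_\TT(X)$ is supported in $\TTspec(X-F) \subset \Omega$ (Corollary \ref{cor:spectra_away_from_one}) and is killed by the localization, leaving only the free part whose trace is conjugation-invariant. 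Naturality of the Lefschetz map under $\KK^\TT_f$-equivalences likewise yields $\Lindex(\Lambda)_f = \Lindex(\bar\Lambda)_f$ where the right-hand side is computed with respect to the Poincar\'e duality of $F$.

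Second, for the trivial $\TT$-action on $F$, Corollary \ref{cor:UCT} combined with the Green--Julg theorem identifies $\KK^\TT_*(C(F), C(F))_f \cong \End_\C(\K^*(F)) \otimes \Laur_f$ and $\K^*_\TT(F)_f \cong \K^*(F) \otimes \Laur_f$ as $\Laur_f$-modules, compatibly with the action of one on the other. Writing $\bar\Lambda = \sum_i \Lambda_i \otimes p_i$ with $\Lambda_i \in \KK(C(F), C(F))$ and $p_i \in \Laur_f$, both sides of the desired identity are $\Laur_f$-linear in the coefficients $p_i$, so the problem reduces to the non-equivariant identity $\Lindex(\Lambda_i) = \trace_\C((\Lambda_i)_*)$ acting on $\K^*(F)$, which is the classical $\K$-theoretic Lefschetz theorem (see \cite{Emerson-Meyer:Dualities}, \cite{Emerson-Meyer:Lefschetz}). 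Reassembling and using injectivity of $\Laur \hookrightarrow \Laur_f$ (since $\Laur$ is a domain) lifts the equality back to $\Laur$.

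The main obstacle is verifying naturality of the Lefschetz map under the $\KK^\TT_f$-equivalence $\rho \colon C(X) \to C(F)$: the Lefschetz map uses the specific Poincar\'e duality data of $X$ (namely $D$, $\Theta$, $s$ in Definition \ref{def:lef_map}), not just the $\KK^\TT$-class of $C(X)$, and one must check that this data restricts compatibly to the corresponding data for $F$ after localization. The key ingredient is that the normal bundle $\nu$ of $F$ in $X$ has free $\TT$-action on $\nu - 0$, so its equivariant Euler class becomes a unit in $\Laur_f$ (as observed after Lemma \ref{lem:vector_bundles}); the discrepancy between $\Tvert X|_F$ and $\Tvert F$ is then absorbed by the inverse of this Euler class, making the two Lefschetz indices agree after localization.
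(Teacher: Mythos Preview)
Your overall strategy---localize at a polynomial vanishing on $\Omega$, transport the problem to the stationary set $F$ via the $\KK^\TT_f$-equivalence $\rho$, settle the trivial-action case, then lift back using injectivity of $\Laur \hookrightarrow \Laur_f$---is exactly the paper's. The two genuine differences are in how you handle the naturality step and the trivial-action endgame.

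On naturality: you correctly flag as the ``main obstacle'' that $\Lindex$ is defined via the specific Poincar\'e duality data $(D,\Theta,s)$ for $X$, and sketch an argument that the Euler class of the normal bundle $\nu$ of $F$ absorbs the discrepancy between $\Tvert X|_F$ and $\Tvert F$ after localization. This can be made to work, but it is more labor than necessary. The paper bypasses the geometric comparison entirely by first rewriting $\Lindex(\Lambda)$ in the abstract form
\[
\Lindex(\Lambda) = \bigl(\Dudelta \otimes_{B\otimes A}(1_B\otimes\Lambda)\bigr)\otimes_{B\otimes A}\Delta,
\]
valid for any dualizable $A$ with dual $(B,\Delta,\underline{\Dudelta})$, and then checking (a two-line Yoneda-type argument) that this quantity is invariant under conjugation by any $\KK^\TT$-equivalence $\alpha\colon A\to A'$, \emph{independently of any relation between the duals of $A$ and $A'$}. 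So once $\rho_f$ is known to be invertible in $\KK^\TT_f$, naturality is automatic and no Euler-class bookkeeping is required. Your sketch is not wrong, but it leaves real work undone; the abstract route closes the gap with no geometry at all.

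On the trivial-action case: you reduce to the non-equivariant Lefschetz identity by writing $\bar\Lambda = \sum_i \Lambda_i\otimes p_i$ via Corollary~\ref{cor:UCT} and invoking the classical result for each $\Lambda_i$. This is fine and arguably more conceptual. The paper instead proves the identity directly for any trivial $G$-action (Lemma~\ref{lem:lef_for_trivial_actions}) by expanding $\underline{\Dudelta}$ in a basis and computing the pairing---essentially the same linear-algebra argument that underlies the non-equivariant case, carried out over $\Rep(G)$ rather than $\C$. Either route works; the paper's is self-contained, yours imports the non-equivariant theorem as a black box.
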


Before proceeding, note that since \(\Lef\) (and \(\Lindex\)) are 
both defined by basic \(\KK^\TT\)-operations, both maps are
 compatible 
in the obvious sense with localization. For any 
\(A\) and \(B\) and any \(\alpha \in \KK^\TT_*(A,B)\), and any 
\(f\in \Laur\), 
denote by \(\alpha_f\in \KK^\TT_*(A,B)_f\) the image of 
\(f\) under localization at \(U_f\). Then compatibility means that 
the diagram
\begin{equation}
\label{eq:lef_and_loc}
\xymatrix{ 
\KK^\TT_*(C(X), C(X)) \ar[d] \ar[r]^{\Lef} &\KK^\TT_*(C(X), \C) \ar[d] \ar[r]^{\ind_\TT}
& \Laur \ar[d] \\
\KK^\TT_*(C(X), C(X))_f  \ar[r]^{\Lef} & \KK^\TT_*(C(X), \C)_f
\ar[r]^{\ind_\TT} & \Laur_f}
\end{equation}
commutes, where the lower row is the `localized' Lefschetz index map, 
defined using Kasparov products as on the top row, except with the 
localized classes \(D_f, \Theta_f\) and so on.

Neither the first nor second vertical map need be injective, of 
course, but the third vertical map is injective because 
\(\Laur\) is an integral domain. The diagram says that 
\( \Lindex (\Lambda)_f = \Lindex_f(\Lambda_f)\) where 
\(\Lindex_f\) is the Lefschetz map in localized \(\KK^\TT\). 

We define the localized module trace 
\[\trace_{\Laur_f}\colon \End_{\Laur_f} (\K^*_\TT (X)_f) \to \Laur_f\] 
as with the non-localized version. Note that localization 
of a \(\Laur\)-module respects the decomposition into 
its torsion and free parts, so that 
\begin{equation}
\label{eq:alg_lef_loc}
 \trace_{\Laur_f} (L_f) = \left[ \trace_{\Laur} (L)\right]_f
 \end{equation} is clear, for any 
\(\Laur\)-module self-map of \(\K^*_\TT (X)\). 

It will be sufficient to prove the following apparently 
weaker version of Theorem 
\ref{thm:lef}. 

\begin{lemma}
\label{lem:localized_lef}
Let \(\Omega\) be as in Theorem \ref{thm:restriction_to_stationary} 
and \(f\in \Laur\) vanish on \(\Omega\). Then the Lefschetz theorem 
for \(X\) holds in 
\(\KK^\TT_f\). That is, 
\[ \Lindex_f (\Lambda_f) = \trace_{\Laur_f} \bigl( (\Lambda_f)_*\bigr)\]
for any \(\Lambda \in \KK^\TT_*(C(X), C(X))\). 
\end{lemma}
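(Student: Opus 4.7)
I reduce the proof in two stages: first from $X$ to its stationary set $F$ (equipped with trivial $\TT$-action) using Theorem \ref{thm:restriction_to_stationary}, and then from the trivially-equivariant case to the classical (non-equivariant) Lefschetz theorem.

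\emph{Stage 1: Reduction to the stationary set.} By Theorem \ref{thm:restriction_to_stationary}, the restriction morphism $\rho\in \KK^\TT(C(X),C(F))$ becomes invertible in $\KK^\TT_f$; let $\alpha\in \KK^\TT(C(F),C(X))_f$ denote its inverse. Conjugation induces a $\Laur_f$-linear bijection
\[
\KK^\TT_*(C(X),C(X))_f \xrightarrow{\;\sim\;} \KK^\TT_*(C(F),C(F))_f, \qquad \Lambda \longmapsto \Lambda_F \defeq \alpha\otimes_{C(X)}\Lambda\otimes_{C(X)}\rho_f,
\]
and the induced endomorphisms $\Lambda_*$ on $\K^*_\TT(X)_f$ and $(\Lambda_F)_*$ on $\K^*_\TT(F)_f$ are conjugate via the $\Laur_f$-isomorphism $\rho_*$. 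Conjugation-invariance of the module trace gives $\trace_{\Laur_f}(\Lambda_*)=\trace_{\Laur_f}((\Lambda_F)_*)$. The corresponding equality $\Lindex_f(\Lambda)=\Lindex_f(\Lambda_F)$ is the statement that the Lefschetz map is invariant under $\KK^\TT_f$-equivalence of its domain: this follows from the identification of $\Lef$ (Definition \ref{def:lef_map}) with the categorical trace associated to the Kasparov--Poincar\'e duality of Theorem \ref{thm:duality}, which is a formal property of any dualizable object in a symmetric monoidal category and is carried out for $\KK^\TT$ in \cite{Emerson-Meyer:Dualities}.

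\emph{Stage 2: The trivial-action case.} It remains to prove the identity for $C(F)$ with trivial $\TT$-action. Here $\K^*_\TT(F)\cong \K^*(F)\otimes_\C \Laur$ is free as a $\Laur$-module, and by Corollary \ref{cor:UCT}(i),
\[
\KK^\TT_*(C(F),C(F))_f \cong \End_{\Laur_f}\bigl(\K^*(F)\otimes \Laur_f\bigr) \cong \End_\C(\K^*(F))\otimes_\C \Laur_f.
\]
Both $\Lindex_f$ and the composition $\Lambda_F\mapsto \trace_{\Laur_f}((\Lambda_F)_*)$ are $\Laur_f$-linear maps into $\Laur_f$ on this free $\Laur_f$-module. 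Since every piece of data entering the definition of $\Lef$ (the Dirac class $D$ on $\Tvert F$, the embedding class $\Theta$, the diagonal $s$) is insensitive to the trivial $\TT$-action, the Lefschetz map for equivariantly-trivial $F$ is the scalar extension to $\Laur_f$ of the non-equivariant Lefschetz map on $\End_\C(\K^*(F))$; likewise the module trace is the scalar extension of the ordinary $\C$-valued trace. The classical Lefschetz theorem in $\KK$ (proved in \cite{Emerson-Meyer:Lefschetz}) equates these on $\End_\C(\K^*(F))$, so by $\Laur_f$-linearity the identity propagates to the entire module.

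\emph{Main obstacle.} The real conceptual content is in Stage 1. The Lefschetz map in Definition \ref{def:lef_map} is defined by concrete geometric data attached to $X$ (its tangent bundle, the Dirac class, the zero-section embedding), which are not intrinsic to the $\KK^\TT$-isomorphism class of $C(X)$. The required conjugation-invariance therefore hinges on recognising that \eqref{eq:lef_map} computes the abstract categorical trace associated to the duality between $C(X)$ and $C_0(\Tvert X)$; once this identification is in hand, invariance under the $\KK^\TT_f$-equivalence $C(X)\simeq C(F)$ is formal. Stage 2, by contrast, is essentially a bookkeeping exercise converting the classical Lefschetz identity into its free-module avatar over $\Laur_f$.
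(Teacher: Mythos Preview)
Your argument is correct and follows the same two-stage architecture as the paper: reduce to the stationary set $F$ via the $\KK^\TT_f$-equivalence of Theorem \ref{thm:restriction_to_stationary}, then handle the trivial-action case directly. The paper carries out Stage 1 exactly as you do, but instead of citing \cite{Emerson-Meyer:Dualities} for the conjugation-invariance of $\Lindex$, it rewrites $\Lindex$ explicitly in terms of abstract duality data $(\Delta,\underline{\Dudelta})$ (Lemma \ref{lem:noncommutative_lef}) and then checks invariance under $\KK^\TT$-equivalence by hand (Lemma \ref{lem:functoriality}); this is precisely the categorical-trace computation you invoke, just performed in-text rather than imported.

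The genuine divergence is in Stage 2. You argue by scalar-extension: using the UCT isomorphism $\KK^\TT_*(C(F),C(F))_f\cong \End_\C(\K^*(F))\otimes \Laur_f$, both $\Lindex_f$ and $\trace_{\Laur_f}$ are $\Laur_f$-linear and agree on the image of non-equivariant $\KK$ by the classical Lefschetz theorem, hence agree everywhere. The paper instead proves the trivial-action case from scratch (Lemma \ref{lem:lef_for_trivial_actions}) by expanding $\underline{\Dudelta}$ in a basis $\{y_i^\epsilon\}$ for $\K^G_*(A)$ and computing the pairing with $\Delta$ directly, obtaining the graded trace by an explicit bilinear-form calculation. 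Your route is cleaner and more conceptual but imports the non-equivariant identity as a black box; the paper's route is self-contained and in fact yields the statement for arbitrary compact $G$ acting trivially, not just $\TT$. One small caution on your citation: \cite{Emerson-Meyer:Lefschetz} is the preprint whose $\TT$-case this paper is meant to establish independently, so for the non-equivariant Lefschetz identity you should point to an earlier source (or simply note that the computation in Lemma \ref{lem:lef_for_trivial_actions} specialises to $G=\{1\}$).
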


Lemma \ref{lem:localized_lef} implies Theorem 
\ref{thm:lef} because combining the diagram \eqref{eq:lef_and_loc}
and its algebraic analogue \eqref{eq:alg_lef_loc} gives 
\begin{multline}
 \Lindex (\Lambda)_f = 
\Lindex_f(\Lambda_f) = \trace_{\Laur_f}\bigl(\Lambda_f)_*\bigr)
\\= \left[\trace_{\Laur} ( \Lambda)\right]_f \in \Laur_f.
\end{multline}
By injectivity of \(\Laur \to \Laur_f\), 
it follows that \(\Lindex (\Lambda) = \trace_{\Laur} ( \Lambda_*)\), 
yielding Theorem \ref{thm:lef}.

To prove Lemma \ref{lem:localized_lef} it is useful to use a slightly different
formalism for the Lefschetz \emph{indices} \(\Lindex (\, \cdot \,)\). This 
formalism  
is more general in the sense that it applies to noncommutative 
\(\TT\)-C*-algebras as well, provided they have duals. (The Lefschetz 
\emph{map} of Definition \ref{def:lef_map} exists in more generality than 
we have suggested, but does not work for noncommutative algebras 
because of the implicit use of the `diagonal map' \(X \to X\times \Tvert X\).)

As above, \(s \colon \Tvert X\to X\times \Tvert X\) is the obvious section. 
Let \(\Sigma \colon X\times \Tvert X \to \Tvert X \times X\) be the flip. 
Set 
\begin{itemize}
\item \(\Delta \defeq \Sigma^*s_*(D)\in \KK^\TT( C_0(\Tvert X\times X), \C)\), 
\item \( \underline{\Dudelta} \defeq (\pnt)_*(\Theta) \in \KK^\TT(\C, C_0(X\times \Tvert X))\), 
\end{itemize}

We denote \(A \defeq C(X)\) and \(B \defeq C_0(\Tvert X)\). 

It is easily checked that \(\Delta\) and \(\underline{\Dudelta}\) satisfy the 
`zig-zag equations' 
\begin{equation}
\label{eq:zig_zag}
 \bigl( \underline{\Dudelta} \otimes_\C 1_A\bigr) \otimes_{A\otimes B\otimes A} \bigl( 1_A\otimes \Delta\bigr) = 1_A,\; \; 
\bigl( 1_B\otimes_\C \underline{\Dudelta} ) \otimes_{B\otimes A\otimes B} \bigl( \Delta\otimes_\C 1_B\bigr)= 1_B\end{equation}
and it follows that the map 
\[ \KK^\TT_*(D_1, D_2\otimes B) \to \KK^\TT_*(D_1\otimes , D_2),\; 
x\mapsto (x\otimes 1_A)\otimes_{B\otimes A} \Delta\]
is an isomorphism for every \(D_1, D_2\) (\emph{c.f.} 
the briefly stated Theorem \ref{thm:duality}). The inverse map
is defined similarly, using \(\underline{\Dudelta}\). This is the kind of 
noncommutative Poincar\'e duality studied by the author in 
several 
papers, \emph{e.g.}
\cite{Emerson:Duality_hyperbolic} and \cite{Emerson:Lefschetz_numbers}. 
See also the discussion in \cite{Connes:NCG},
and the survey \cite{Rosenberg:duality}. 

Set \(\Dudelta \defeq \Sigma_*(\underline{\Dudelta})\).

\begin{lemma}
\label{lem:noncommutative_lef}
In the above notation: for any \(\Lambda \in \KK^\TT_*(A, A)\defeq \KK^\TT_*(C(X), C(X))\), 
\begin{equation}
\label{eq:lindex_noncommutative}
\Lindex(\Lambda) 
= \bigl( \Dudelta\otimes_{B\otimes A} (1_B\otimes \Lambda)\bigr)\otimes_{B\otimes A} \Delta\in \KK^\TT_*(\C, \C) \cong \Laur
\end{equation}
Similarly after localization. 
\end{lemma}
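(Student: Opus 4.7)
The plan is to establish this formula by unwinding the definition of $\Lindex(\Lambda)$ into a single chain of Kasparov products and matching it, factor by factor, against the right hand side of \eqref{eq:lindex_noncommutative} using the definitions of $\Delta$ and $\Dudelta$ together with the flip symmetry of the external product.

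First I would expand $\Lef(\Lambda)$ according to Definition \ref{def:lef_map}: writing $[\pnt]\in\KK^\TT(\C,A)$ for the unital inclusion, $[s]\in\KK^\TT(A\otimes B,B)$ for the pullback by $s$, and composing with $(\pnt)_*$ from Definition \ref{def:lindex}, one gets
\[
\Lindex(\Lambda)=[\pnt]\otimes_A\Theta\otimes_{A\otimes B}(\Lambda\otimes_\C 1_B)\otimes_{A\otimes B}[s]\otimes_B D.
\]
Recognising $[\pnt]\otimes_A\Theta=\underline{\Dudelta}$ and $[s]\otimes_B D=s_*(D)$ by the very definitions of $\underline{\Dudelta}$ and $s_*(D)$, this rewrites as
\[
\Lindex(\Lambda)=\underline{\Dudelta}\otimes_{A\otimes B}(\Lambda\otimes_\C 1_B)\otimes_{A\otimes B}s_*(D).
\]

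Next I would expand the right hand side of \eqref{eq:lindex_noncommutative}. Writing $[\Sigma]\in\KK^\TT(B\otimes A,A\otimes B)$ and $[\Sigma^{-1}]\in\KK^\TT(A\otimes B,B\otimes A)$ for the flip and its inverse, the definitions $\Dudelta=\Sigma_*\underline{\Dudelta}$ and $\Delta=\Sigma^* s_*(D)$ give $\Dudelta=\underline{\Dudelta}\otimes_{A\otimes B}[\Sigma^{-1}]$ and $\Delta=[\Sigma]\otimes_{A\otimes B} s_*(D)$. Substituting turns the right hand side into
\[
\underline{\Dudelta}\otimes_{A\otimes B}[\Sigma^{-1}]\otimes_{B\otimes A}(1_B\otimes\Lambda)\otimes_{B\otimes A}[\Sigma]\otimes_{A\otimes B}s_*(D).
\]
The crux of the argument is then the standard flip identity for Kasparov's external product, namely
\[
[\Sigma^{-1}]\otimes_{B\otimes A}(1_B\otimes\Lambda)\otimes_{B\otimes A}[\Sigma]=\Lambda\otimes_\C 1_B
\]
in $\KK^\TT(A\otimes B,A\otimes B)$; this is just the commutativity of external product applied to the factor $\Lambda$. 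Inserting this identity into the displayed expression collapses the flips and reproduces exactly the formula for $\Lindex(\Lambda)$ obtained above.

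The final step is the localised statement: since localisation at $f$ is defined as a tensor product $(-)_f=(-)\otimes_{\Laur}\Laur_f$, it commutes with Kasparov composition, external products and the pushforward/pullback operations appearing above, so the same chain of equalities holds in $\KK^\TT_f$. There is no serious obstacle here beyond keeping track of the tensor orderings; the only substantive input is the flip identity, which is a basic property of the equivariant Kasparov product. The rest is bookkeeping dictated by Definitions \ref{def:lef_map} and \ref{def:lindex}.
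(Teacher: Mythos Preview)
Your argument is correct and follows essentially the same route as the paper: expand $\Lindex(\Lambda)$ via Definitions \ref{def:lef_map} and \ref{def:lindex} to obtain $\underline{\Dudelta}\otimes_{A\otimes B}(\Lambda\otimes 1_B)\otimes_{A\otimes B}s_*(D)$, then use the flip to pass from $(\Lambda\otimes 1_B)$ to $(1_B\otimes\Lambda)$ and absorb the flips into $\Dudelta$ and $\Delta$. The paper compresses your explicit flip identity into the single phrase ``carrying the flip across,'' but the content is identical, and your remark on localisation matches the paper's one-line observation.
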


\begin{proof}
Using the definitions 
\begin{multline}
\Lindex (\Lambda) \defeq 
(\pnt)_*\bigl(\Lef (\Lambda) \bigr)
\\ = (\pnt)_*(\Theta)\otimes_{C_0(X\times \Tvert X)} (\Lambda\otimes_\C 1_{C_0(\Tvert X)} ) 
\otimes_{C_0(X\times \Tvert X)} [s^*]\otimes_{C_0(\Tvert X)} D
\\ = \underline{\Dudelta}\otimes_{C_0(X\times \Tvert X)} (\Lambda\otimes 1_{C_0(\Tvert X)})
\otimes_{C_0(X\times \Tvert X)} \Sigma^*(\Delta).
\end{multline}
where \([s^*]\in \KK^\TT (C_0(X\times \Tvert X), C_0(\Tvert X) ) \) is the class of
\(s\). Carrying the flip across yields 
\begin{equation}
= \Dudelta \otimes_{C_0(\Tvert X\times )} (1_{C_0(\Tvert X)}\otimes_\C \Lambda) \otimes_{C_0(\Tvert X\times X)} \Delta 
\end{equation}
as required. 
\end{proof}

In particular, using the right hand side of 
\eqref{eq:lindex_noncommutative}, we can define
 the Lefschetz index of a morphism 
\(\Lambda \in \KK^\TT_*(A,A)\) for any \(\TT\)-C*-algebra 
\(A\) for which there exists a triple \( (B, \Delta, \underline{\Dudelta})\) satisfying 
\eqref{eq:zig_zag}. We call such \(A\) \emph{dualizable}. 

The author believes  
that \(A\) dualizable implies \(\K^\TT_*(A)\) is a finitely generated 
\(\Laur\)-module (see \cite{Emerson-Meyer:Dualities} for the 
non-equivariant proof) but does not have a reference. We 
are not interested in proving this here, since the \(A\) 
we consider obviously have finitely generated equivariant 
\(\K\)-theory.

Suppose for such \(A\) there exists a C*-algebra \(A'\) and a 
\(\KK^\TT\)-equivalence \(\alpha \in \KK^\TT(A,A')\). In this 
case, \(A'\) is also dualizable using \(B'\defeq B\), 
\begin{equation}
\label{eq:isomorphic_dual}
\Delta'\defeq 
(1_B\otimes_\C \alpha^{-1})\otimes_{B\otimes A} \Delta \in \KK^\TT(B' \otimes A', \C)
\end{equation} and 
\begin{equation}
\label{eq:isomorphic_dual_two}
\underline{\Dudelta}' \defeq \underline{\Dudelta}\otimes_{A\otimes B} (\alpha\otimes 1_B)\in  
\KK^\TT(\C, A'\otimes B').
\end{equation}
Conjugation by \(\alpha\) gives an isomorphism 
\(\KK^\TT_*(A,A) \cong \KK^\TT(A',A')\) and it is easy to check that 

\begin{lemma}
\label{lem:functoriality}
\begin{equation}
\Lindex (\Lambda) = \Lindex(\alpha \otimes_{A'} \Lambda \otimes_{A'} \alpha^{-1})
\end{equation}
for any \(\Lambda \in \KK_*(A',A')\), and where the left-hand-side of this 
equation is defined using the dual \( (B',\Delta', \underline{\Dudelta}')\) and 
the right-hand-side using \( ( B, \Delta, \underline{\Dudelta})\). 

\end{lemma}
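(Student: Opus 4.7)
The plan is to apply Lemma \ref{lem:noncommutative_lef} to both sides and reduce the identity to a formal manipulation of external and internal Kasparov products. The point is that the dual data $(B', \Delta', \underline{\Dudelta}')$ for $A'$ differs from the dual data $(B, \Delta, \underline{\Dudelta})$ for $A$ only by insertions of $\alpha$ and $\alpha^{-1}$ in appropriate positions, so the two Lefschetz indices must agree by associativity and by the interchange law relating external and internal products.

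First I would record the analogue of \eqref{eq:isomorphic_dual_two} obtained after applying the flip $\Sigma$, namely
\begin{equation*}
\Dudelta' = \Dudelta \otimes_{B \otimes A} (1_B \otimes \alpha) \in \KK^\TT\bigl(\C, B \otimes A'\bigr),
\end{equation*}
which follows because $\Sigma_* = (-) \otimes [\Sigma]$ is itself a Kasparov product, so it associates with the product defining $\underline{\Dudelta}'$, while naturality of the flip turns $\alpha \otimes 1_B$ into $1_B \otimes \alpha$ on the other side. Combined with the identity $B' = B$ and Lemma \ref{lem:noncommutative_lef} applied to the dual of $A'$, this gives
\begin{equation*}
\Lindex(\Lambda) = \Dudelta \otimes_{B \otimes A} (1_B \otimes \alpha) \otimes_{B \otimes A'} (1_B \otimes \Lambda) \otimes_{B \otimes A'} (1_B \otimes \alpha^{-1}) \otimes_{B \otimes A} \Delta.
\end{equation*}

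Then, by the interchange law $(1_B \otimes x) \otimes_{B \otimes A'} (1_B \otimes y) = 1_B \otimes (x \otimes_{A'} y)$ for the external product, the three middle factors collapse to $1_B \otimes (\alpha \otimes_{A'} \Lambda \otimes_{A'} \alpha^{-1})$, and the resulting expression is precisely the right-hand side of \eqref{eq:lindex_noncommutative} for the morphism $\alpha \otimes_{A'} \Lambda \otimes_{A'} \alpha^{-1} \in \KK^\TT_*(A,A)$, computed with the dual $(B, \Delta, \underline{\Dudelta})$ of $A$. Applying Lemma \ref{lem:noncommutative_lef} in the reverse direction closes the argument.

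The only real obstacle is notational bookkeeping: one must track which pair of C*-algebras each intermediate product is taken over, and justify that $\Sigma_*$ distributes across the product defining $\underline{\Dudelta}'$. There is no deeper ingredient; the proof is a purely formal consequence of the axioms of the equivariant Kasparov category, namely associativity of $\otimes$, naturality of the flip, and the interchange of internal and external products. No input from the geometry of $X$ or from Poincar\'e duality beyond its abstract form \eqref{eq:zig_zag} is required, which is precisely the reason the statement should be viewed as a general functoriality property of the Lefschetz index in the dualizable setting.
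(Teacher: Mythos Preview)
Your proof is correct. The paper does not actually supply a proof of this lemma: it merely says ``it is easy to check that'' immediately before stating the lemma, and then moves on. Your argument is exactly the kind of verification the author has in mind---unwinding the definitions of $\Delta'$, $\underline{\Dudelta}'$ and $\Dudelta'$ via \eqref{eq:isomorphic_dual} and \eqref{eq:isomorphic_dual_two}, plugging into the formula \eqref{eq:lindex_noncommutative} of Lemma~\ref{lem:noncommutative_lef}, and then collapsing the middle string $(1_B\otimes\alpha)\otimes_{B\otimes A'}(1_B\otimes\Lambda)\otimes_{B\otimes A'}(1_B\otimes\alpha^{-1})$ to $1_B\otimes(\alpha\otimes_{A'}\Lambda\otimes_{A'}\alpha^{-1})$ using the interchange law. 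The only step requiring a moment's thought is the one you flag, namely that applying the flip $\Sigma$ to $\underline{\Dudelta}'$ gives $\Dudelta' = \Dudelta\otimes_{B\otimes A}(1_B\otimes\alpha)$; this is naturality of $\Sigma_*$ with respect to the Kasparov product, and you handle it correctly.
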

This is of course what is to be expected if \(\Lindex\) is to 
agree with a 
\(\Laur\)-valued trace: the statement
\[ \trace_{\Laur} (\Lambda_*)
= \trace_{\Laur} (\alpha_*^{-1} \circ \Lambda_*\circ \alpha_*)\]
with \(\Lambda_*\colon \K_*^\TT (A') \to \K^*_\TT (A')\), 
\(\alpha_*\colon \K_*^\TT (A) \to \KK^\TT_*(A')\) the 
module maps induced by \(\Lambda\) and \(\alpha\), 
is obvious. 

Lemma \ref{lem:functoriality} also proves the independence 
of 
\[ \Lindex \colon \KK^\TT_*(A,A) \to \Laur\]
of the choice of dual \( (B, \Delta, \Dudelta)\), since any 
two duals for a fixed \(\TT\)-C*-algebra \(A\) are related by a 
self-\(\KK^\TT\)-equivalence of \(B\) as in \eqref{eq:isomorphic_dual}
and \eqref{eq:isomorphic_dual_two}.

This discussion has its obvious analogue in the 
localized category \(\KK^\TT_f\) (for any \(f\in \Laur\)). That is, 
we can speak of a C*-algebra \(A\) being 
\emph{dualizable} in \(\KK^\TT_f\), we may define the 
Lefschetz index map \(\Lindex_f\colon \KK^\TT_*(A,A)_f \to 
\Laur_f\), and so on, \emph{c.f.} the discussion around 
\eqref{eq:lef_and_loc} regarding the Lefschetz \emph{map} 
for \(A= C(X)\).

We now 
return to the case where \(A = C(X)\) for a smooth, compact  \(\TT\)-manifold 
\(X\).  Let \(f\) be as in Theorem \ref{thm:restriction_to_stationary}.
Thus \(\rho_f\in \KK^\TT(C(X), C(F))_f\) is a \(\KK^\TT_f\)-equivalence. 
Hence by the analogue in \(\KK^\TT_f\) of Lemma \ref{lem:functoriality}, 
\[ \Lindex_f (\Lambda_f) = \Lindex_f ( \rho_f^{-1}\otimes_{C(X)} \Lambda_f
\otimes_{C(F)} \rho_f).\] Note that 
\(\Lindex_f\) is defined for the stationary set \(F\) because already 
\[\Lindex \colon \KK^\TT_*(C(F), C(F)) \to \KK^\TT_*(C(F), \C)\]
is defined, because the stationary set \(F\) is a smooth 
\(\TT\)-manifold (with the trivial action), and hence has a dual.

 Our goal at this stage is therefore to prove that 
\begin{equation}
\label{eq:reduced_to}
\Lindex_f (\mu) = \trace_{\Laur_f} (\mu_*)
\end{equation}
for any \(\mu \in \KK^\TT(C(F), C(F))_f\). This will prove 
Lemma \ref{lem:localized_lef} and hence Theorem \ref{thm:lef}. 
But since \(F\) is a trivial \(\TT\)-space, we can 
prove even the stronger statement 
\begin{equation}
\label{eq:trivial_lef}
\Lindex (\mu) = \trace_{\Laur} (\mu_*)
\end{equation}
In fact this is simply a computation with bilinear forms, and 
applies to general groups. 

%In fact at this stage, since we have only trivial 
%\(\TT\)-spaces \(F\) and \(\Tvert F\), the Lefschetz theorem is 
%proved exactly like its non-equivariant 
%counterpart (worked out in \cite{Emerson:Lefschetz_numbers}). 

\begin{lemma}
\label{lem:lef_for_trivial_actions}
Let \(G\) be a compact group, let \(A\) be a trivial 
\(G\)-C*-algebra and \(B\) a \(G\)-C*-algebra. Assume that
as a C*-algebra, \(A\) is in the boostrap category \(\mathcal{N}\). 
Finally, assume that \(B\) and \(A\) are Poincar\'e dual, \emph{i.e} 
that there exist classes 
\( \Delta \in \KK^G_0(B\otimes A, \C)\) and 
\(\underline{\Dudelta} \in\KK_0^G (\C, A\otimes B)\) such that 
\eqref{eq:zig_zag} are satisfied. Let \(\Lambda \in 
\KK_*(A,A)\) and \(\Dudelta \defeq \Sigma_*(\Dudelta) \in \KK^G_0(\C, B\otimes A)\).
 Then 
 \[ \bigl( \Dudelta \otimes_{B\otimes A} (1_B\otimes \Lambda)\bigr)
 \otimes_{B\otimes A} \Delta = \mathrm{trace}_s (\Lambda_*)\]
 holds, where the trace is that of the module map 
 induced by \(\Lambda\) on the \emph{free, finitely generated} 
 \(\Rep (G)\)-module \(\K_*^G (A) \cong \K_*(A)\otimes_\C \Rep (G)\).  
 
In particular, Lemma \ref{lem:localized_lef}, and hence 
Theorem \ref{thm:lef} and (hence) all of 
its localized 
analogues (in particular \eqref{eq:reduced_to}) 
 hold for trivial compact \(\TT\)-manifolds \(X\). 
\end{lemma}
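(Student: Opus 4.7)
The plan is to reduce the claimed identity to a standard algebraic computation with dual bases in a free $\Rep(G)$-module. Since the $G$-action on $A$ is trivial and $A$ lies in the bootstrap category, the Green-Julg theorem together with the UCT identifies $\K^G_*(A)$ with the free $\Rep(G)$-module $\K_*(A)\otimes_\C \Rep(G)$; the existence of a Poincar\'e dual for $A$ forces $\K^G_*(A)$ to be finitely generated, so $\K_*(A)$ is finite-dimensional over $\C$. Fix a homogeneous $\C$-basis $\{e_i\}$ of $\K_*(A)$, which is simultaneously a free $\Rep(G)$-basis of $\K^G_*(A)$.

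Next, I would invoke the equivariant K\"unneth theorem---whose hypotheses are again supplied by triviality of the action on $A$ and $A\in \mathcal{N}$---to expand
\begin{equation*}
\underline{\Dudelta} \;=\; \sum_i e_i \otimes f_i \;\in\; \KK^G_0(\C, A\otimes B)
\end{equation*}
for uniquely determined homogeneous classes $f_i\in \K^G_{*}(B)$ with $|f_i|\equiv |e_i|\pmod 2$. Applying the graded flip gives
\begin{equation*}
\Dudelta \;=\; \sum_i (-1)^{|e_i|}\, f_i\otimes e_i.
\end{equation*}
The first zig-zag relation in \eqref{eq:zig_zag}, evaluated on the generator $e_j\in \KK^G_*(\C,A)$, reads $\sum_i e_i\cdot \Delta(f_i\otimes e_j)=e_j$ after the Künneth identification, which forces the duality relation $\Delta(f_i\otimes e_j)=\delta_{ij}$ in $\Rep(G)$. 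Thus $\{f_i\}$ is precisely the $\Delta$-dual basis to $\{e_i\}$.

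The final step is a direct calculation. Since $\Lambda$ has even degree, write $\Lambda_*(e_i)=\sum_k c_{ik}\,e_k$ with $c_{ik}\in \Rep(G)$ vanishing unless $|e_i|=|e_k|$. Substituting,
\begin{equation*}
\bigl(\Dudelta\otimes_{B\otimes A}(1_B\otimes \Lambda)\bigr)\otimes_{B\otimes A}\Delta
\;=\;\sum_{i,k}(-1)^{|e_i|}\,c_{ik}\,\Delta(f_i\otimes e_k)
\;=\;\sum_i (-1)^{|e_i|}\,c_{ii},
\end{equation*}
which is precisely the graded $\Rep(G)$-valued trace $\trace_s(\Lambda_*)$ of $\Lambda_*$ on the free graded module $\K^G_*(A)$. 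The main obstacle will be careful bookkeeping of the Koszul signs produced by the graded flip $\Sigma$ so that the final overall sign matches the paper's convention for the supertrace; once this sign issue is checked, the identity is essentially immediate from the dual-basis property. The concluding sentence of the lemma then follows because on the stationary set $F\subset X$ appearing in \eqref{eq:reduced_to} the $\TT$-action is trivial and $C(F)$ lies in $\mathcal{N}$, so the preceding paragraphs apply verbatim after localization to any $\mu\in \KK^\TT_*(C(F),C(F))_f$, thereby yielding Lemma \ref{lem:localized_lef} and hence Theorem \ref{thm:lef}.
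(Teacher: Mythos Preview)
Your proposal is correct and follows essentially the same approach as the paper: use Green--Julg and the K\"unneth theorem (valid since $A$ has trivial $G$-action and lies in $\mathcal{N}$) to expand $\underline{\Dudelta}$ in a homogeneous $\Rep(G)$-basis of $\K^G_*(A)$, extract the dual-basis relation $\Delta(f_i\otimes e_j)=\delta_{ij}$ from the first zig-zag equation, pick up the sign $(-1)^{|e_i|}$ from graded commutativity of the external product under $\Sigma$, and read off the supertrace. The paper's argument is line-for-line the same, with notation $y^\epsilon_i, x^\epsilon_i$ in place of your $e_i, f_i$ and the sign bookkeeping handled exactly as you anticipate.
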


\begin{proof}
Since \(A\) is a trivial \(G\)-C*-algebra, 
\(\KK^G_*(\C, B\otimes A) \cong \KK_*(\C, A\otimes \, B\rtimes G)\).
The assumed equivariant duality implies non-equivariant duality and  
this implies (see \cite{Emerson-Meyer:Dualities} that 
\(\K_*(A)\) is finite-dimensional. By the Green-Julg theorem  
\(\KK^G_*(\C, B\otimes A) \cong \KK_*(\C, B\rtimes G\, \otimes A)\), 
and by the (non-equivariant) 
K\"unneth theorem 
(\cite{Black} Theorem 23.1.3) this is 
\(\cong \KK_*(\C, B\rtimes G) \otimes \KK_*(\C, A) 
\cong \KK_*^G(\C, B)\otimes \KK^G_*(\C, A)\);
where the tensor product is in the 
category of \(\Rep (G)\)-modules.

 Thus, external product 
\[ \KK^G_*(\C, A) \otimes \KK^G_*(\C, B) \to \KK^G_*(\C, A\otimes B)\]
is an isomorphism; for emphasis, the tensor product on the 
left-hand-side is in the category of \(\Rep (G)\)-modules. 

We may find a finite basis \(\{ y^\epsilon_i\}\) for 
\(\K_*^G(A)\) as an \(\Rep (G)\)-module with \(y^\epsilon_i 
\in \K_\epsilon^G(A)\), and there exist 
\(x^\epsilon_i \in \K_\epsilon^G(B)\) such that 
\begin{equation}
\label{eq:Dudelta_expanded}
\underline{\Dudelta} = \sum_i y^\epsilon_i\otimes_\C x^\epsilon_i \in \K_0^G(A\otimes B).
\end{equation}
We have assumed (\eqref{eq:zig_zag}) that 
\begin{equation}
\label{eq:dual_one}
(\underline{\Dudelta}\otimes_\C 1_A) \otimes_{A\otimes B\otimes A} (1_A\otimes \Delta) = 1_A\in \KK_0(A,A).
\end{equation}
Applying the functor from the category \(\KK^G\) to the category of 
\(\Z/2\)-graded \(\Rep (G)\)-modules, we get that 
\[ y =  y\otimes_A \bigl( ( \underline{\Dudelta}\otimes 1_A) 
\otimes_{A\otimes B\otimes A} 
(1_A\otimes \Delta) \bigr)\]
for all \(y \in \K_*(A)\). 
Expanding the right-hand-side using \eqref{eq:Dudelta_expanded}
yields 
\begin{multline}
y = \sum_{i, \epsilon} \bigl( y^\epsilon_i\otimes_\C x_i^\epsilon\otimes_\C y\bigr)\otimes_{A\otimes B\otimes A} (1_A\otimes \Delta)\\
= \sum_{i, \epsilon} y^\epsilon_i \otimes_\C \bigl( (x_i^\epsilon\otimes_\C y)\otimes_{B\otimes A}\Delta\bigr) = \sum_{i, \epsilon} L^\epsilon_i (y)y^\epsilon_i
\end{multline}
where, as indicated, \(L^\epsilon_i (y) = (x^\epsilon_i \otimes_\C y)\otimes_{B\otimes A}\Delta\in \Rep (G)\). 
Since the \( y^\epsilon_i\) form a basis, we deduce by setting 
\(y = y^\gamma_j\), that 
\begin{equation}
\label{eq:deltaij}
L^\epsilon_i (y^\gamma_j) = \delta_{\epsilon, \gamma}\delta_{i,j}.
\end{equation}
Now let \(\Lambda \in \KK_0(A,A)\) (similar computations apply to 
odd morphisms.) We can write 
\begin{equation}
\Lambda_*(y^\epsilon_i) = \sum_j \lambda_{ij}^\epsilon y^\epsilon_j.
\end{equation}
Since the external product in \(\KK^G\) is graded commutative, 
\[ \Dudelta \defeq \Sigma_*(\underline{\Dudelta}) = \sum_{i,\epsilon} (-1)^\epsilon 
x^\epsilon_i\otimes_\C y^\epsilon_j.
\]
We get, therefore,  
\begin{multline}
\bigl( \Dudelta \otimes_{B\otimes A} (1_B\otimes \Lambda) \bigr)\otimes_{B\otimes A} \Delta 
\\= \sum_{i, \epsilon} (-1)^\epsilon \bigl(x_i^\epsilon\otimes_\C y^\epsilon_i \bigr) \otimes_{B\otimes A} (1_B\otimes_\C \Lambda) \otimes_{B\otimes A} \Delta \\
= \sum_{i, j,\epsilon} (-1)^\epsilon \lambda^\epsilon_{ij} (x^\epsilon_i \otimes_\C y^\epsilon_j)\otimes_{B\otimes A} \Delta
= \sum_{i, \epsilon} \lambda_{ii}^\epsilon 
\end{multline}
where the last step is using \eqref{eq:deltaij}. This gives the graded trace of 
\(\Lambda_*\) acting on the free \(\Rep (G)\)-module \(\K^G_*(A)\) as required. 

The last statement follows from setting \(A = C(X)\) as in the discussion 
around \eqref{eq:zig_zag}. 

\end{proof}

We close with a brief discussion of equivariant Euler numbers, 
in order to illustrate the Lefschetz theorem. 

\begin{remark}
The case of 
Euler numbers is the case where \(\Lambda \) is a `twist' of the 
identity correspondence, thus \(\Lambda\) has the form 
\( X\xleftarrow{\Id} (X, \xi) \xrightarrow{\Id} X\) where 
\(\xi \in \K^*_\TT (X)\). 
\end{remark}

We first make a general observation about the Lefschetz map.

%Note that \emph{both the domain and range of the Lefschetz 
%map \eqref{eq:lef_map} are modules over 
%\(\K^*_\TT (X)\)}. Moreover, the following is not hard to 
%check. 

\begin{lemma}
For any \(\TT\)-C*-algebra \(A\) and any \(\TT\)-space 
\(X\),  \(\KK^\TT_*(C(X), A)\) is a module over 
\(\K^*_\TT (X)\). This module structure is `natural' with 
respect to \(A\).

Moreover, 
\(\Lef\colon \KK^\TT_*(C(X), C(X)) \to \KK_*^\TT (C(X), \C)\) 
is a \(\K^*_\TT (X)\)-module homomorphism. 
\end{lemma}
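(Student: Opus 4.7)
The plan is first to establish the module structure and then to verify compatibility with the Lefschetz map via a projection formula. For the first claim, the commutativity of $C(X)$ provides a $\TT$-equivariant $*$-homomorphism $m\colon C(X)\otimes C(X)\to C(X)$, and hence a class $[m]\in \KK^\TT_0(C(X)\otimes C(X),C(X))$. Given $\xi\in \K^*_\TT(X) = \KK^\TT_*(\C, C(X))$, I would define
\[
\hat\xi \defeq (\xi \otimes_\C 1_{C(X)}) \otimes_{C(X)\otimes C(X)} [m] \in \KK^\TT_*(C(X),C(X)),
\]
and then declare $\xi \cdot \alpha \defeq \hat\xi \otimes_{C(X)} \alpha$ for $\alpha \in \KK^\TT_*(C(X), A)$. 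The ring and module axioms follow from associativity of the Kasparov product together with the associativity and commutativity of $m$. Naturality in $A$ is immediate, because the action is by left Kasparov composition with $\hat\xi$, and the covariant functor $\KK^\TT_*(C(X),-)$ intertwines such left composition with pushforward along arbitrary $\KK^\TT$-morphisms in $A$.

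For the Lefschetz map, I would unpack both $\Lef(\hat\xi \otimes_{C(X)}\Lambda)$ and $\hat\xi \otimes_{C(X)}\Lef(\Lambda)$ by tracing through Definition \ref{def:lef_map} and applying associativity of Kasparov products. After cancelling the common right-hand factor $(\Lambda \otimes_\C 1_{C_0(\Tvert X)}) \otimes s^* \otimes_{C_0(\Tvert X)} D$, the required equality reduces to the single identity
\[
\Theta \otimes_{C(X)\otimes C_0(\Tvert X)} (\hat\xi \otimes_\C 1_{C_0(\Tvert X)}) \;=\; \hat\xi \otimes_{C(X)} \Theta
\]
in $\KK^\TT_*(C(X),\, C(X) \otimes C_0(\Tvert X))$.

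The heart of the argument is this last identity, which is a projection formula for the wrong-way Gysin class $\Theta$ of the $\TT$-equivariant $\K$-oriented embedding $\rho\colon X \to X\times \Tvert X$, $\rho(x) = (x,(x,0))$. Since the first coordinate of $\rho$ is the identity on $X$, the associated $*$-homomorphism satisfies $\rho^*(g\otimes 1)=g$ for $g\in C(X)$; applied to $\hat\xi \otimes 1_{C_0(\Tvert X)}$ viewed as multiplication by $\xi \otimes 1$ on $C(X)\otimes C_0(\Tvert X)$, this gives $\rho^*(\hat\xi \otimes 1) = \hat\xi$, whence the projection formula for $\rho$ delivers the identity. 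The main obstacle is the justification of this projection formula in the equivariant Kasparov framework; I would handle it by factoring $\rho = (\Id_X \times z)\circ \Delta_X$ with $z\colon X \to \Tvert X$ the zero section, writing $\Theta$ as the Kasparov product of the Gysin classes of $\Delta_X$ and of $\Id_X \times z$, and reducing to the analogous projection formulas for each factor, both of which are direct consequences of the Thom isomorphism for the corresponding $\TT$-equivariant normal bundle.
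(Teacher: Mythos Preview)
Your argument is correct. The paper in fact states this lemma without proof, adding only the informal remark afterwards that the module structure on \(\K_*^\TT(X)\) corresponds to twisting an elliptic operator by a vector bundle; the claim that \(\Lef\) is a module map is simply asserted. Your treatment is therefore considerably more detailed than the paper's: the construction of the module action via the multiplication \(*\)-homomorphism \(m\colon C(X)\otimes C(X)\to C(X)\) is the standard one, and your reduction of the module-homomorphism property to the single identity \(\Theta\otimes_{C(X)\otimes C_0(\Tvert X)}(\hat\xi\otimes_\C 1_{C_0(\Tvert X)})=\hat\xi\otimes_{C(X)}\Theta\) is exactly the projection formula for the Gysin class of the \(\K\)-oriented embedding \(\rho\). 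The factorisation \(\rho=(\Id_X\times z)\circ\Delta_X\) and appeal to the Thom isomorphism for each factor is a clean way to justify that formula in the equivariant Kasparov setting; alternatively one may invoke the general projection formula for equivariant wrong-way classes proved in \cite{Emerson-Meyer:Correspondences} or \cite{Connes-Skandalis:Longitudinal}. Either way, what you have written supplies precisely the argument the paper elides.
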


The \(\K^*_\TT (X)\)-module structure on \(\K_*^\TT (X)\) corresponds
to the process of twisting an elliptic operator by a vector bundle.
Furthermore, it follows from the axiomatic definition of the 
Kasparov product that the Kasparov 
pairing \(\K^*_\TT (X)\times \K_*^\TT (X) = 
\KK^\TT_*(\C, C(X)) \times \KK^\TT_*(C(X), \C)\) 
maps 
\( (\xi, a)\) to 
\(\pnt_*(a\cdot \xi)\), 
where the dot is the module structure, \(\pnt\colon X \to \pnt\) is the 
map from \(X\) to a point. 

 We therefore have 
\[ \langle \xi, \Lef (\Lambda) \rangle = \Lindex (\Lambda \cdot \xi)\in \Laur\]
for any \(\Lambda \in \KK^\TT_* (C(X),C(X))\) and 
\(\xi \in \K^*_\TT (X)\), and, roughly, if we can realize \(\Lef (\Lambda)\) as 
the class of a suitable elliptic operator, then this can be interpreted
as the \(\TT\)-index of that operator twisted by \(\xi\).  The module 
structure can also be easily described explicitly in topological 
terms, using correspondences.

The point is that the action of 
\(\Lambda \cdot \xi\in \KK^\TT_*(C(X), C(X))\) on \(\K^*_\TT (X)\) 
is clearly 
the composition 
\[ \K^*_\TT (X) \xrightarrow{\Lambda_*}\K^*_\TT (X) 
\xrightarrow{\lambda_\xi } \K^*_\TT (X)\]
where the map denoted \(\lambda_\xi \) is \emph{ring multiplication by \(\xi\)}; 
this is clearly a \(\Laur\)-module map. Therefore we 
get a refinement of Theorem \ref{thm:lef} involving the 
twisted Lefschetz numbers \(\Lindex (\Lambda \cdot \xi)
=\langle \xi, \Lef (\Lambda) \rangle\). 

\begin{proposition}
\label{prop:twisted_lef}
In the above notation, 
\[\langle \xi, \Lef (\Lambda) \rangle = \trace_{\Laur}( \Lambda_*\circ \lambda_\xi)\in \Laur\]
for any \(\Lambda \in \KK^\TT_*(C(X), C(X))\) and \(\xi \in \K^*_\TT (X)\), where \(\lambda_\xi\) 
is the \(\Rep (\TT)\)-module homomorphism of ring multiplication by \(\xi\). 
\end{proposition}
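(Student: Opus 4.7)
The plan is to reduce Proposition \ref{prop:twisted_lef} to the already-established Lefschetz theorem (Theorem \ref{thm:lef}), applied to the specific endomorphism $\Lambda \cdot \xi \in \KK^\TT_*(C(X), C(X))$ obtained from the $\K^*_\TT(X)$-module structure on $\KK^\TT_*(C(X), C(X))$.

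First, I would invoke the identity
$$\langle \xi, \Lef(\Lambda) \rangle = \Lindex(\Lambda \cdot \xi),$$
which is the content of the paragraph immediately preceding the proposition. This is a formal consequence of the axiomatic description of the Kasparov pairing $\K^*_\TT(X) \times \K_*^\TT(X) \to \Laur$ as $(\xi, a) \mapsto \pnt_*(a \cdot \xi)$, together with the fact that $\Lef$ is a $\K^*_\TT(X)$-module homomorphism. Second, I would apply Theorem \ref{thm:lef} to the morphism $\Lambda \cdot \xi$ to obtain
$$\Lindex(\Lambda \cdot \xi) = \trace_{\Laur}\bigl( (\Lambda \cdot \xi)_* \bigr).$$

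Third, I would identify the induced $\Laur$-module endomorphism on $\K^*_\TT(X)$. The $\K^*_\TT(X)$-module structure on $\KK^\TT_*(C(X), C(X))$ is induced by Kasparov composition with the ring homomorphism $\xi \mapsto \lambda_\xi$ from $\K^*_\TT(X)$ into $\KK^\TT_*(C(X), C(X))$, so functoriality of $\K_*^\TT(-)$ gives $(\Lambda \cdot \xi)_* = \lambda_\xi \circ \Lambda_*$, exactly as noted by the author just before the statement. Cyclicity of $\trace_{\Laur}$ then yields
$$\trace_{\Laur}(\lambda_\xi \circ \Lambda_*) = \trace_{\Laur}(\Lambda_* \circ \lambda_\xi),$$
which matches the right-hand side of the proposition.

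The only slightly delicate point is the cyclicity of $\trace_{\Laur}$ for composable $\Laur$-module endomorphisms of the (not necessarily free) module $\K^*_\TT(X)$. Because every $\Laur$-module endomorphism preserves the torsion submodule, it has a block upper-triangular form with respect to any torsion-plus-free decomposition $M = T \oplus F$, and $\trace_{\Laur}$ is by definition the ordinary trace of the diagonal action on the free block $F$. A product of two such upper-triangular endomorphisms again has this form, so the cyclicity of $\trace_{\Laur}$ reduces to the standard cyclic property of the trace on a finitely generated free module over the commutative ring $\Laur$, which is routine. This step is the main (but minor) obstacle; everything else is a transparent assembly of results already proved in the paper.
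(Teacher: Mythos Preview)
Your proposal is correct and follows essentially the same route as the paper. The paper does not supply a separate proof environment for this proposition; the argument is contained in the paragraph immediately preceding it, namely the identity \(\langle \xi, \Lef(\Lambda)\rangle = \Lindex(\Lambda\cdot\xi)\) together with the observation that \((\Lambda\cdot\xi)_* = \lambda_\xi\circ\Lambda_*\), after which one invokes Theorem~\ref{thm:lef}. Your only addition is the explicit cyclicity argument reconciling \(\trace_{\Laur}(\lambda_\xi\circ\Lambda_*)\) with the \(\trace_{\Laur}(\Lambda_*\circ\lambda_\xi)\) appearing in the statement, a point the paper leaves implicit; your justification via block upper-triangular form on a torsion--free decomposition is correct.
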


We call the elements \( e_X(\xi) \defeq \Lindex (\Id\cdot \xi)\) for 
\(\xi \in \K^*_\TT (X)\), the \emph{twisted \(\TT\)-equivariant Euler numbers of 
\(X\)}.  Note that \(e_X(\xi) = 0\) if 
 \(\xi\) is an odd \(K\)-class. 

We may interpret the Euler numbers in two different ways, given the 
above discussion: 

\begin{itemize}
\item \(e_X(\xi)\) is the \(\TT\)-equivariant analytic index of the de Rham operator on \(X\) twisted by \(\xi\). 
\item \( e_X(\xi)\) is the module trace \(\trace_{\Laur} (L_\xi)\) of ring multiplication by \(\xi\) on \(\K^*_\TT (X)\). 
\end{itemize}

The first statement follows from the computation in 
\cite{Emerson-Meyer:Equi_Lefschetz}, which proves the 
much stronger statement that 
\(\Lef (\Id) = [D_{\textup{dR}}]\in \K_0^G(X)\), where 
\(D_{\textup{dR}}\) is the de Rham (or `Euler') operator on \(X\) 
and \(G\) is \emph{any} locally compact group acting 
properly and smoothly on \(X\). For further information on the 
class of the de Rham operator and related issues, see 
\cite{Emerson-Meyer:Euler} and \cite{Emerson-Meyer:Equi_Lefschetz}, 
and the paper of Rosenberg and L\"uck \cite{Rosenberg-Lueck:Lefschetz} and 
of Rosenberg \cite{Rosenberg:Euler}. 

To compute the invariants in the first interpretation, let
\(g\in \TT\) generate the circle topologically, 
so that \(\textup{Fix} (g) = F\).  Since \(g\colon X \to X\) is 
\(\TT\)-equivariantly homotopic to the identity, 
\(\Lef (\Id) = \Lef ([g^*])\). Now the computation of the 
Lefschetz map (for ordinary smooth self-maps) in 
\cite{Emerson-Meyer:Equi_Lefschetz} yields  
\[ \Lef ([g^*]) = (i_F)_* ([D_{\textup{dR}}^F])\]
where \(D_{\textup{dR}}^F\) is the de Rham operator 
on \(F\), \([D_{\textup{dR}}^F]\) its class in 
\(\KK^\TT_0(C(F), \C)\), and 
\(i_F\colon F \to X\) is the inclusion map. (The sign data 
in \cite{Emerson-Meyer:Equi_Lefschetz} vanishes because 
\(g\) is an isometry, which implies that the vector bundle 
map \(\Id - Dg\) on the \(\TT\)-equivariant normal bundle 
to \(F\) is homotopic to the identity bundle map.)

Thus, we 
see that  
\( e_X(\xi) = e_F( \xi |_F )\) where \(e_F(\xi |_F)\) denotes 
the equivariant Lefschetz number of the restriction of 
\(\xi\) to the smooth (trivial) \(\TT\)-space \(F\). By another 
application of the Lefschetz theorem, this time for the 
trivial \(\TT\)-space \(F\), yields that 
this equals the \(\TT\)-index of the de Rham operator 
on \(F\) twisted by \(\xi |_F\). 

Since \(F\) is \(\TT\)-fixed pointwise, we can further simplify this 
answer. Assume first that \(F\) is connected. The bundle 
\(E|_F\) can be diagonalized into 
eigenspaces for the \(\TT\)-action, \(E|_F \cong \oplus_\lambda  E_\lambda\) 
where \(\TT\) acts on \(E_\lambda\) by the character \(f_\lambda\), some 
\(f_\lambda\in \Laur\). Let \(\xi_\lambda = [E_\lambda]\in \K^0(F)\). 
We see then that  
\[ e_F (\xi |_F) =  \sum_\lambda e_F^{\textup{non-equ.}}(\xi_\lambda)f_\lambda\]
where in this formula \(e_F^{\textup{non-equ.}}\) are the twisted, 
\emph{non-equivariant} 
Euler numbers for the stationary manifold \(F\). 

Non-equivariant Euler numbers are straightforward to 
compute. The index of the de Rham operator on a 
connected compact manifold \(P\), twisted by \(\xi \in \K^0(P)\), 
is simply \(\chi (P)\dim (\xi)\in \Z\), where 
\(\chi\) is the numerical Euler characteristic.

We conclude that 
\[ e_X([E]) = \chi(F) \sum_\lambda \dim_\C (E_\lambda)\,  f_\lambda .\]
If \(F\) has components \(\{P\}\) then this formula becomes
\[ \trace_{\Laur}(\lambda_\xi) = 
e_X(\xi) = \sum_P \chi(P) 
\sum_\lambda \dim_\C ( (E|_P)_\lambda)\,f_{\lambda, P}.\]
The 
right-hand-side is by and large easy to compute in specific situations. The 
case of isolated fixed-points is particularly transparent.

\begin{proposition}
Let \(X\) be a smooth compact 
\(\TT\)-manifold with a finite set of 
isolated stationary points. Then for 
any \(\xi\in \K^0_\TT (X)\), 
\[ \trace_{\Laur}(\lambda_\xi) 
= \sum_{P\in F} \xi_P\]
where the \(\xi_P\) are the  
restrictions of \(\xi\in \K^*_\TT (X)\) to the points \(P\), each 
such \(P\) yielding an element \( \xi_P\in \K^*_\TT (P)\cong \Laur\). 
\end{proposition}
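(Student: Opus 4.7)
The plan is to specialize the general formula
\[ \trace_{\Laur}(\lambda_\xi) = e_X(\xi) = \sum_P \chi(P) \sum_\lambda \dim_\C((E|_P)_\lambda)\, f_{\lambda,P}\]
derived in the paragraph preceding the proposition to the case at hand. When the stationary set $F$ consists of isolated points, each connected component of $F$ is a single point $P$, so $\chi(P)=1$ and the outer sum reduces to a sum over the fixed-point set. It remains only to identify $\sum_\lambda \dim_\C((E|_P)_\lambda)\, f_{\lambda,P}$ with $\xi_P$.

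First I would note that for a single stationary point $P$, we have $\K^0_\TT(P) = \K_0^\TT(\C) = \Rep(\TT) \cong \Laur$, where the isomorphism is by character. Since $\xi \in \K^0_\TT(X)$ is represented (in a neighbourhood of $P$) by a finite-dimensional $\TT$-equivariant vector bundle, its restriction $\xi|_P$ is the class in $\Rep(\TT)$ of the fibre $E_P$, which is a finite-dimensional $\TT$-representation. This representation decomposes as $E_P \cong \bigoplus_\lambda E_\lambda$ where $\TT$ acts on $E_\lambda$ through the character $f_\lambda \in \Laur$, so the character of $E_P$ is precisely $\sum_\lambda \dim_\C(E_\lambda)\, f_\lambda$. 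This is exactly $\xi_P \in \Laur$ under the identification $\K^0_\TT(P) \cong \Laur$.

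Substituting into the displayed formula gives
\[\trace_{\Laur}(\lambda_\xi) = \sum_{P\in F} 1 \cdot \xi_P = \sum_{P \in F}\xi_P,\]
as claimed. No significant obstacle arises: the proposition is essentially a transparent restatement of the preceding general formula once one recognizes that the equivariant Euler characteristic of an isolated point is $1$ and that the restriction map $\K^0_\TT(X) \to \K^0_\TT(P) \cong \Laur$ simply records the character of the fibre at $P$. The only minor care needed is the appeal to Proposition \ref{prop:twisted_lef} (together with the preceding computation using $\Lef(\Id) = (i_F)_*[D^F_{\mathrm{dR}}]$) to justify that $\trace_\Laur(\lambda_\xi)$ equals the equivariant Euler number $e_X(\xi)$, which has already been established in the paragraph immediately before the proposition.
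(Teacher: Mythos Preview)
Your proposal is correct and follows exactly the approach the paper intends: the proposition is stated immediately after the general formula for $\trace_{\Laur}(\lambda_\xi)$ with the remark that ``the case of isolated fixed-points is particularly transparent,'' and no separate proof is given. Your specialization---setting $\chi(P)=1$ for each isolated point and recognizing $\sum_\lambda \dim_\C(E_\lambda)\,f_\lambda$ as the character $\xi_P$---is precisely the transparent step the paper has in mind.
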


The following example illustrates the difference in computing 
the two invariants equated by the Lefschetz theorem. 

\begin{example}
Let \(X = \mathbb{CP}^1\) with the \(\TT\)-action induced by the 
embedding \( \TT \to \mathrm{SU}_2(\C)\subset \mathrm{Aut}(\C^2), \; z\mapsto \left[
\begin{matrix} z & 0 \\ 0 & \bar{z}\end{matrix}\right]\). There are two stationary 
points, with homogeneous coordinates 
\([1,0]\) and \([0,1]\) respectively. Let \(H^*\) be the canonical 
line bundle on \(\mathbb{CP}^1\), it is a \(\TT\)-invariant
 sub-bundle of \(\mathbb{CP}^1\times \C^2\) so has a canonical structure of 
 \(\TT\)-equivariant vector bundle. Restricting \(H^*\) to the 
 stationary points \([1,0]\) and \([0,1]\) yields respectively the
 characters \(X\) and \(X^{-1}\), whence by the 
 Lefschetz theorem 
 \[e_{\mathbb{CP}^1}([H^k]) = 
 \mathrm{trace}_{\Laur} (\lambda_{[H]}^k) = X^k+X^{-k}\in \Laur.\]
 where \(H\) is the dual of \(H^*\).
Computation of the  \(\mathrm{trace}_{\Laur} (\lambda_{[H]}^k)\) 
by homological methods requires computing 
\(\K^*_\TT(\mathbb{CP}^1)\) as both a ring and as a 
\(\Laur\)-module.  By results of Atiyah and others, 
(see Segal's article \cite{Segal} for a beautiful and concise proof) it is generated as 
a commutative unital ring by \(X\) and \([H]\) with the relations that 
\(X\) and \([H]\) are invertible and commute, and satisfy 
\[ ([H] - X)([H]-X^{-1}) = 0.\]
Hence \([H]^2 = (X+X^{-1}) [H] \, +1\).  
This implies that as a \(\Laur\)-module, \(\K^0_\TT (\mathbb{CP}^1)\) 
is generated by the unit \(1\) of the ring, and the element 
\([H]\). This is a free basis, and with respect to it 
 \[\lambda_{[H]} = \left[ \begin{matrix} 0 & 1\\ 1 & X+X^{-1} \end{matrix}\right].\]
The trace is \(X+X^{-1}\). The formula for \(\trace_{\Laur}(\lambda_{[H]}^k)\)
 follows from induction, using 
the relation 
\(\lambda_{[H]}^n = (X+X^{-1}) \lambda_{[H]}^{n-1} +\lambda_{[H]}^{n-2}\), 
which comes from the relation given by the minimal polynomial 
\( \lambda^2- (X+X^{-1})\lambda -1\) of \(\lambda_{[H]}\).
 
\end{example}

%We conclude by emphasizing that the Lefschetz theorem we have stated 
%and proved here, has little to do with the Index Theorem. The 
%Index Theorem identifies the \(\KK\)-classes built in the 
%standard analytic way from Dirac operators, with other, topologically defined
% \(\KK\)-classes (correspondences) obtained by 
%combining Thom isomorphisms and various embeddings. In the papers  
%\cite{Emerson-Meyer:Wrong_way} and \cite{Emerson-Meyer:Correspondences}
%we have formalized this using a purely topological category 
%\(\widehat{\KK}^G\) of equivalence classes of equivariant correspondences, 
%and a  map \(\widehat{\KK}^G \to \KK^G\). In this language, the Index Theorem 
%amounts to the statement that this topologically defined 
%map sends a correspondence to the 
%corresponding analytically defined class. 

%On the other hand, the Lefschetz theorem should be considered as a
%taking place entirely in the topological theory \(\widehat{\KK}^G\).

%Atiyah and Bott prove their Lefschetz theorem 
%by purely analytic methods, in the framework of elliptic complexes. Their 
%theorem and ours intersect, roughly speaking, for the de Rham complex. The 
%latter has a certain homotopy-invariance. The statement in this case is 
%a purely topological one, and our proof is purely topological. 

\end{document}